\setlist[enumerate,1]{label=(\alph*)}
\setlist[enumerate,2]{label=\roman*)}
\setlist[enumerate,3]{label=\arabic*.}
\setlist[enumerate,4]{label=\Alph*}
\newcommand{\R}{\ensuremath{\mathbb{R}}}
\newcommand{\N}{\ensuremath{\mathbb{N}}}
\newcommand{\Z}{\ensuremath{\mathbb{Z}}}
\newcommand{\eps}{\ensuremath{\varepsilon}}
\DeclareMathOperator{\idSymb}{\mathrm{id}}
\newcommand{\id}[1]{\idSymb_{#1}}
\newcommand{\iso}{\ensuremath \cong}
\newcommand{\sub}{\ensuremath{\subseteq}}
\newcommand{\set}[2]{\ensuremath{\{#1 : #2\} }}
\newcommand{\sset}[1]{\ensuremath{\{#1\}}}
\newcommand{\ndef}{\colonequals}
\newcommand{\defn}{\equalscolon}
\newcommand{\rest}[3][]{\ensuremath{#2|_{#3}\ifthenelse{ \equal{#1}{} }{}{^{#1}}} }
\newcommand{\image}[1]{\ensuremath{\mathrm{im}\, #1 }}
\newcommand{\SkaPrd}[2]{\ensuremath{\langle #1, #2\rangle}}
\DeclareMathOperator{\distop}{\mathrm{dist}}
\newcommand{\dist}[2]{\ensuremath \distop(#1,#2)}
\newcommand{\abs}[1]{\ensuremath \lvert #1\rvert}
\newcommand{\norm}[1]{\ensuremath \lVert #1\rVert}
\newcommand{\MetaBall}[4][]
{
	\ensuremath
	{
	\ifthenelse{ \equal{#1}{} }
	{
		 #4_{#3}(#2)
	}
	{
		#4_{#1}(#2, #3)
	}
	}
}
\newcommand{\Ball}[3][]{\MetaBall[#1]{#2}{#3}{B}}
\newcommand{\clBall}[3][]{ \MetaBall[#1]{#2}{#3}{\cl{B}} }
\newcommand{\supp}[1]{\ensuremath{\mathrm{supp}(#1)}}
\newcommand{\neigh}[2][]{\ensuremath{ \mathcal{U}\ifthenelse{ \equal{#1}{} }{}{_{#1}}(#2) }}
\newcommand{\neighO}[2][]{\ensuremath{ \mathcal{U}\ifthenelse{ \equal{#1}{} }{}{_{#1}}^\circ(#2) }}
\newcommand{\cl}[1]{\overline{#1}}
\newcommand{\fami}[3]{\ensuremath{(#1)_{#2 \in #3}}}
\newcommand{\famiI}[1]{\fami{#1}{i}{I}}
\newcommand{\dotcup}{\ensuremath{\mathaccent\cdot\cup}}
\newcommand{\disjointU}[2][]{\dotcup\ifthenelse{\equal{#1}{}}{}{_{#1}}#2}
\newcommand{\disjointUI}[1]{\disjointU[i\in I]{#1}}
\newcommand{\Rint}[5][\int]{\ensuremath {#1}_{#2}^{#3} #4\, d #5}
\newcommand{\Mint}[3][\int]{\Rint[#1]{0}{1}{#2}{#3}}
\newcommand{\dA}[4][]{
	\ensuremath{
	d\ifthenelse{ \equal{#1}{} }{}{^{(#1)}} 
		#2 \ifthenelse{\equal{#3}{} \and \equal{#4}{} }{}{(#3;#4)}
		}
}
\newcommand{\FAbl}[2][]{
	\ensuremath{
	D\ifthenelse{ \equal{#1}{} }{}{^{(#1)}}#2
	}
}
\newcommand{\parFAbl}[3][]{ \ensuremath{ \FAbl[#1]{_{#2} #3} } }
\newcommand{\LLA}[2][]{ \ensuremath{\delta_{\ell}\ifthenelse{\equal{#2}{} }{}{(#2)} } }
\newcommand{\RLA}[2][]{ \ensuremath{\delta_{\rho}\ifthenelse{\equal{#2}{} }{}{(#2)} } }
\newcommand{\Tang}[2][]{
\ensuremath{\mathbf{T}\ifthenelse{ \equal{#1}{} }{}{_{#1} }
	\ifthenelse{ \equal{#2}{} }%
	{}%
	{#2}%
	}
}
\newcommand{\VecFields}[2][]{\ensuremath{\mathfrak{X}^{#1}(#2)}}
\DeclareMathOperator{\DiffSym}{\mathrm{Diff}}
\newcommand{\ConDiff}[4][]
{
	\ensuremath{
	\mathcal{C}^{#4}_{#1}
	\ifthenelse{ \equal{#2}{} \and \equal{#3}{} }
	{}
	{(#2, #3)}
	}
}
\newcommand{\FC}[4][]
{
	\ensuremath{
	\mathcal{FC}^{#4}
	\ifthenelse{ \equal{#2}{} \and \equal{#3}{} }
	{}
	{(#2, #3)}
	}
}
\newcommand{\GewFunk}{\cW}
\newcommand{\hn}[3]{\ensuremath\norm{#1}_{#2, #3}}
\newcommand{\CF}[4]{\ensuremath\mathcal{C}^{#4}_{#3}(#1, #2)}
\newcommand{\CFo}[4]{\CF{#1}{#2}{#3}{\partial, #4}}
\newcommand{\CcF}[3]{\CF{#1}{#2}{\GewFunk}{#3}}
\newcommand{\CcFo}[3]{\CFo{#1}{#2}{\GewFunk}{#3}}
\newcommand{\CFpro}[6]{\CF{#1}{#2}{#3}{#4}_{#5 \in #6}}
\newcommand{\CcFproI}[3]{\CFpro{#1}{#2}{\GewFunk}{#3}{i}{I}}
\newcommand{\CFfoPRO}[7]{\CFpro{#1}{#2}{#3}{{#5}_\partial, #4}{#6}{#7}}
\newcommand{\CcFfoPROi}[4]{\CFfoPRO{#1}{#2}{\GewFunk}{#3}{#4}{i}{I}}
\newcommand{\CFoPRO}[6]{\CFo{#1}{#2}{#3}{#4}_{#5 \in #6}}
\newcommand{\CcFoPRO}[5]{\CcFo{#1}{#2}{#3}_{#4 \in #5}}
\newcommand{\CcFoPROi}[3]{\CcFoPRO{#1}{#2}{#3}{i}{I}}
\newcommand{\BC}[3]{\ensuremath{\mathcal{BC}^{#3}(#1, #2)}}
\newcommand{\DCc}[3]{\CF{#1}{#2}{c}{#3}}
\newcommand{\DCcInf}[2]{\DCc{#1}{#2}{\infty}}
\newcommand{\hnM}[4]{\hn{ #1}{#4, #2}{#3}}
\newcommand{\CFM}[4]{\CF{#1}{\Tang{#1}}{#2}{#3}_{#4}}
\newcommand{\CcFM}[3]{\CFM{#1}{\GewFunk}{#2}{#3}}
\newcommand{\DCcInfM}[1]{\DCcInf{#1}{\Tang{#1}}}
\newcommand{\CcFproAtK}[5]{\CFpro{#1_\kappa}{#2}{\WeightsAtl{\GewFunk}{#3}}{#4}{\kappa}{#5}}
\newcommand{\CcFfoPROatK}[6]{\CFfoPRO{#1_\kappa}{#2}{\WeightsAtl{\GewFunk}{#3} }{#4}{#5}{\kappa}{#6}}
\newcommand{\CcFoPROatK}[5]{\CFoPRO{#1_\kappa}{#2}{\WeightsAtl{\GewFunk}{#3} }{#4}{\kappa}{#5}}
\newcommand{\embMWeightsAtl}[2]{\ensuremath{\iota_{#1}^{#2}}}
\newcommand{\embMcWatl}[1]{\embMWeightsAtl{\GewFunk}{#1}}
\newcommand{\Diff}[3]{
	\ensuremath{\DiffSym^{#2}_{#3}(#1)}
}
\newcommand{\DiffW}{\Diff{\SX}{}{\GewFunk}}
\newcommand{\DiffcM}{\Diff{M}{}{c}}
\newcommand{\compIdDiffKLWeights}[4]{\mathfrak{c}_{#4, #3}^{#1, #2}}
\newcommand{\compIdWeights}[2]{\mathfrak{c}_{#2}^{#1}}
\newcommand{\compIdDiffKLcW}[3]{\compIdDiffKLWeights{#1}{#2}{#3}{\GewFunk}}
\newcommand{\InvIdWeights}[2]{I^{#1}_{#2}}
\newcommand{\InvIdcW}[1]{\InvIdWeights{#1}{\GewFunk}}
\newcommand{\cW}{\ensuremath{\mathcal{W}}}
\newcommand{\ExtWeights}[1]{\ensuremath{#1_{\mathrm{max}}}}
\newcommand{\noma}[1]{\norm{#1}_\infty}
\newcommand{\Opnorm}[1]{\norm{#1}_{op}}
\DeclareMathOperator{\Id}{\mathrm{Id}}
\newcommand{\idco}{\ensuremath{\Id}}
\newcommand{\MaMu}{\ensuremath{\cdot}}
\newcommand{\eval}{\ensuremath{\cdot}}
\newcommand{\Lin}[3][]{
\ensuremath{
\ifthenelse{\equal{#1}{}}
	{
		\ifthenelse{\equal{#2}{#3}}
			{\mathrm{L}(#2)}
			{\mathrm{L}(#2,#3)}
	}
	{
		\mathrm{L}^{#1}(#2, #3)
	}
}
}
\newcommand{\EmbA}[3][]{\ensuremath{\mathcal E}_{#2,#3}\ifthenelse{\equal{#1}{}}{}{^{#1}}}
\newcommand{\Evol}[3]{\ensuremath{\mathrm{Evol}_{#1}^{#3}\ifthenelse{\equal{#2}{}}{}{(#2)}}}
\newcommand{\evol}[3]{\ensuremath{\mathrm{evol}_{#1}^{#3}\ifthenelse{\equal{#2}{}}{}{(#2)}}}
\newcommand{\SX}{\ensuremath X}
\newcommand{\SY}{\ensuremath Y}
\newcommand{\UF}{\ensuremath U}
\newcommand{\VF}{\ensuremath V}
\newcommand{\Rexp}[1]{\ensuremath{\exp_{#1}}}
\newcommand{\RexpPar}[1]{\ensuremath{\exp_{#1}}}
\newcommand{\Rlog}[1]{\ensuremath{\log_{#1}}}
\DeclareMathOperator{\lgsymb}{\mathrm{lg}}
\newcommand{\RlogVR}[1]{\ensuremath{\lgsymb_{#1}}}
\newcommand{\RlogPar}[1]{\Rlog{#1}}
\newcommand{\domExpMax}[1]{\ensuremath{D^E_{#1}}}
\newcommand{\domLogMax}[1]{\ensuremath{D^L_{#1}}}
\newcommand{\grenzExp}[3][]{\ensuremath{R^{E\ifthenelse{\equal{#1}{}}{}{, #1}}_{#2, #3}}}
\newcommand{\grenzLog}[3][]{\ensuremath{R^{L\ifthenelse{\equal{#1}{}}{}{, #1}}_{#2, #3}}}
\newcommand{\BndFstAblRex}[3][]{\ensuremath{C^{E, (1)}_{#2, #3\ifthenelse{\equal{#1}{}}{}{, #1}}}}
\newcommand{\BndSndAblRex}[3][]{\ensuremath{C^{E, 2}_{#2, #3\ifthenelse{\equal{#1}{}}{}{, #1}}}}
\newcommand{\BndFstAblRlog}[3][]{\ensuremath{C^{L, (1)}_{#2, #3\ifthenelse{\equal{#1}{}}{}{, #1}}}}
\newcommand{\BndSndAblRlog}[3][]{\ensuremath{C^{L, 2}_{#2, #3\ifthenelse{\equal{#1}{}}{}{, #1}}}}
\newcommand{\VectFLok}[2]{\ensuremath{#1_{#2}}}
\newcommand{\RMetLok}[2]{\ensuremath{#1_{#2}}}
\newcommand{\weightLok}[2]{\ensuremath{#1_{#2}}}
\newcommand{\WeightsLok}[2]{\ensuremath{#1_{#2}}}
\newcommand{\WeightsAtl}[2]{\ensuremath{#1_{#2}}}
\newcommand{\weightAtl}[2]{\ensuremath{#1_{#2}}}
\newcommand{\atProd}[2]{\ensuremath{#1 \otimes #2}}
\newcommand{\atCap}[2]{\ensuremath{{#1}^{\cap #2}}}
\newcommand{\LogPlusVR}[3]{\ensuremath{\mathcal{L}_{#1, #2}^{#3}}}
\newcommand{\ExpMinusLok}[3]{\ensuremath{\mathcal{E}_{#1, #2}^{#3}}}
\newcommand{\QuotNorm}[2][g]{\ensuremath{Q_{#2}^{#1}}}
\newcommand{\disjointUkM}[1]{\disjointU[\kappa\in \atlasA]{#1}}
\newcommand{\famiAtl}[2]{\fami{#1}{\kappa}{#2}}
\newcommand{\ExpIdWeights}[4][]{\ensuremath{E_{#2, #3}^{#4\ifthenelse{\equal{#1}{}}{}{,#1}} }}
\newcommand{\ExpIdCWprod}[3][g]{\ExpIdWeights[#1]{#2}{#3}{\WeightsAtl{\GewFunk}{\atlasB}}}
\newcommand{\ExpIdCWfak}[3][\RMetLok{g}{\kappa}]{\ExpIdWeights[#1]{#2}{#3}{\WeightsLok{\GewFunk}{\kappa}}}
\newcommand{\LogIdWeights}[4][]{\ensuremath{L_{#2, #3}^{#4\ifthenelse{\equal{#1}{}}{}{,#1}} }}
\newcommand{\LogIdCWprod}[3][g]{\LogIdWeights[#1]{#2}{#3}{\WeightsAtl{\GewFunk}{\atlasB}}}
\newcommand{\LogIdCWfak}[3][\RMetLok{g}{\kappa}]{\LogIdWeights[#1]{#2}{#3}{\WeightsLok{\GewFunk}{\kappa}}}
\newcommand{\extMult}[2]{\ensuremath{\mathcal{M}_{#1, #2}}}
\newcommand{\RadExpFibInv}[3][]{\ensuremath{R_{#2, #3}\ifthenelse{\equal{#1}{}}{}{^{#1}} }}
\newcommand{\atlasA}{\ensuremath{\mathcal{A}}}
\newcommand{\atlasB}{\ensuremath{\mathcal{B}}}
\newcommand{\atlasC}{\ensuremath{\mathcal{C}}}
\newcommand{\minSatExt}[1][]{\ifthenelse{ \equal{#1}{M} }{M}{m}inimal saturated extension}
\newcommand{\DiffWmFKaTrMET}[6]{\Diff{#1,#2, #3}{#4, #5}{#6}}
\newcommand{\DiffWMg}{\DiffWmFKaTrMET{M}{g}{\omega}{\atlasA}{\atlasB}{\GewFunk}}
\newcommand{\DiffWvr}[2]{\DiffWmFKaTrMET{#1}{#2}{1_{#1}}{\atlasA}{\atlasB}{\GewFunk}}
\begin{document}

\title{Weighted diffeomorphism groups of Riemannian manifolds}
\author{Boris Walter}
\date{}
\publishers{
\small{}
Universität Paderborn\\ Institut für Mathematik\\
Warburger Straße 100\\ 33098 Paderborn\\
E-Mail: bwalter@math.upb.de
}

\maketitle
MSC 2010: Primary 58D05, Secondary 22E65, 58B25, 46T05, 46T10, 58C25
\begin{abstract}
	\noindent{}In this paper, we define locally convex vector spaces of
	weighted vector fields
	and use them as model spaces for Lie groups of weighted diffeomorphisms on Riemannian manifolds.
	We prove an easy condition on the weights that ensures
	that these groups contain the compactly supported diffeomorphisms.
	We finally show that for the special case where the manifold is the euclidean space,
	these Lie groups coincide with the ones constructed in the author's earlier work
	[\enquote{Weighted diffeomorphism groups of Banach spaces and weighted mapping groups}.
	In: Dissertationes Math. 484 (2012), p. 128. DOI: \href{http://dx.doi.org/10.4064/dm484-0-1}{10.4064/dm484-0-1}].
\end{abstract}

\section{Introduction}

Diffeomorphism groups of compact manifolds
are among the most important and well-studied examples of infinite dimensional Lie groups
(see for example \cite{MR0210147}, \cite{MR830252}, \cite{MR656198}, \cite{MR1421572} and \cite{MR1471480}).
While the diffeomorphism group $\Diff{K}{}{}$ of a compact manifold is modelled
on the Fréchet space $\ConDiff{K}{\Tang{K} }{\infty}$ of smooth vector fields on $K$,
for a non-compact smooth manifold $M$, it is not possible to make $\Diff{M}{}{}$
a Lie group modelled on the space of all smooth vector fields in a satisfying way
(see \cite{MilnorPreprint82}).
We mention that the LF-space $\CF{M}{\Tang{ M}}{c}{\infty}$
of compactly supported smooth vector fields can be used as the modelling space
for a Lie group structure on $\Diff{M}{}{}$.
But the topology on this Lie group is too fine for many purposes;
the group $\DiffcM$ of compactly supported diffeomorphisms
(those diffeomorphisms that coincide with the identity map outside some compact set)
is an open subgroup (see \cite{MR583436} and \cite{MilnorPreprint82}).

In view of these limitations, it is natural to look for Lie groups of diffeomorphisms
which are larger then $\Diff{M}{}{c}$ and modelled on larger Lie algebras of vector fields
than $\CF{M}{\Tang{ M}}{c}{\infty}$.
In \cite{MR2952176},
the author already turned
\[
	\Diff{M}{}{\cW} = \set{\phi \in \Diff{M}{}{}}%
		{\phi - \id{M},\phi^{-1} - \id{M} \in \CF{M}{M}{\cW}{\infty}}
\]
into an infinite-dimensional Lie group
(modelled on the space $\CF{M}{M}{\cW}{\infty}$ of smooth weighted functions)
when $M$ is a (possibly infinite-dimensional) Banach space.

For $M = \R$ and $\cW$ the set of polynomials,
this includes the Lie group of rapidly decreasing diffeomorphisms treated in the earlier work
\cite{MR2263211}
(compare also \cite{MR3132089}, \cite{MR3313140} and
\cite{MR3437748} for later developments).
In this work, which contains results of the author's dissertation,
we extend the construction from \cite{MR2952176}
to the case where $M$ is a Riemannian manifold.

As a motivating example, consider the direct product
\[
	M \ndef \R \times {\mathbb S}
\]
of the real line and the circle group.
Then smooth vector fields on $M$ can be identified with smooth functions
\[
	\gamma : \R\times \R \to \R^2
	: (x,y)\mapsto \gamma(x,y)
\]
which are $2\pi$-periodic in the $y$-variable.
To control the asymptotic behaviour of vector fields (and associated diffeomomorphisms) at infinity,
it is natural to impose that $\gamma$ (and its partial derivatives)
decay polynomially as $x\to \pm\infty$ in the sense that, for each $n \in \N$,
\[
	x^n\gamma(x,y)
\]
is bounded for $(x,y)\in \R^n$ (and hence $\gamma$ tends to $0$ as $x\to \pm \infty$).%
\footnote{Likewise, we could impose that $\gamma$ and all its partial derivatives
are bounded, or have exponential decay.}
The preceding approach hinges on the very specific situation considered;
namely, that we have the local diffeomorphism $q : (t,s) \mapsto (t,e^{i s})$
from $\R^2$ (on which vector fields can be identified with smooth functions $\R^2\to\R^2$)
onto $\R \times {\mathbb S}$.
Of course, one would like to be able to describe weighted vector fields as just encountered
also without reference to $q$, and for general manifolds
(none of whose covering manifolds
need to admit a global chart).

This is achieved in this paper, in the following form.
Let $(M, g)$ be a Riemannian manifold,
and $\atlasA$ an atlas for $M$ that is \emph{adapted} (see \refer{def:adapted_atlas} for the precise meaning) and \enquote{thin}.
In \refer{satz:Existenz_Liegruppe_Diffeos_MFK}, we prove:
\begin{thmOhneNummer}
	Let $\GewFunk \sub \cl{\R}^M$ with $1_M \in \GewFunk$.
	Then there exists a Lie group $\DiffWMg$ of \emph{weighted diffeomorphisms}
	that is a subgroup of $\Diff{M}{}{}$
	and modelled on the space $\CFM{M}{\GewFunk^e}{\infty}{\atlasA}$
	of weighted vector fields with regard to $\atlasA$.
	Further, the Riemannian logarithm provides a chart for $\DiffWMg$.
	Here $\atlasB$ denotes a suitable subatlas of $\atlasA$, and
	$\GewFunk^e$ denotes a \emph{\minSatExt{}} of $\GewFunk \cup \sset{\omega}$,
	where $\omega$ is an \emph{adjusted weight}.
\end{thmOhneNummer}
The terms \emph{\minSatExt{}} and \emph{adjusted weights} are defined in \refer{def:saturated_adjusted-weights};
and the model spaces $\CFM{M}{\GewFunk^e}{\infty}{\atlasA}$ in \refer{susec:Def_Weighted_VFs}.
Further, we prove in \refer{prop:Inclusion_of_compactly_supported_diffeomorphisms} that
the groups $\DiffWMg$ contain at least the identity component of $\DiffcM$,
provided that $\GewFunk$ consists of continuous weights.
Finally, we show in \refer{prop:Comparison_DiffLieGroups_VectorS}
that if $(M, g) = (\R^d, \SkaPrd{\cdot}{\cdot})$ and $\atlasA$ consists of identity mappings,
then the connected identity components and the topology of $\Diff{\R^d}{}{\GewFunk}$ and $\DiffWvr{\R^d}{\SkaPrd{\cdot}{\cdot}}$ coincide,
giving us plenty of examples for this construction.

In the case where $\cW = \sset{1_\SX}$, our group $\DiffW$ also has a counterpart
in the studies of Jürgen Eichhorn and collaborators (\cite{MR1856078}, \cite{MR1392288}, \cite{MR2343536}),
who studied certain diffeomorphism groups on non-compact manifolds with bounded geometry.
While an affine connection is used there to deal with higher derivatives,
we are working exclusively with derivatives in local charts.

\section{Definitions and previous results}
Before we start, we have to repeat some of the notation and results of \cite{Walter_Diss_p1} (and \cite{MR2952176}).
We set $\cl{S} \ndef S \cup \sset{\infty}$ for $S \in \sset{\R, \N}$,
and $\N \ndef \sset{0, 1, \dotsc}$.
Other notation is introduced when it is first used.

\subsection{Restricted products of weighted functions}

In \cite{MR2952176}, for normed spaces $\SX$, $\SY$, open $\UF \sub \SX$,
a set $\cW \sub \cl{\R}^\UF$ of weights and $k \in \cl{\N}$,
we defined weighted function spaces $\CF{\UF}{\SY}{\cW}{k}$.
Basically, these are the locally convex vector spaces defined by the quasinorms
\[
	\hn{\cdot}{f}{\ell}
	:\FC{\UF}{\SY}{k}\to [0,\infty]
	: \phi \mapsto \sup\set{\abs{f(x)}\,\Opnorm{\FAbl[\ell]{\phi}(x)} }{x\in \UF}
\]
(where $\ell \leq k$) on the space of $k$ times continuously Fréchet differentiable functions from $\UF$ to $\SY$. 

In \cite{Walter_Diss_p1}, we defined \emph{restricted products} of such weighted function spaces
and obtained results about the smoothness of operations between such spaces.
We give the definitions and results that are necessary for our discussion of weighted vector fields.

\subsubsection{Definitions and topological properties}

\begin{defi}[Weighted restricted products, maximal extensions]\label{def:res_prods-max_weights}
	Let $I$ be a nonempty set, $\famiI{U_i}$ a family such that each $U_i$
	is an open nonempty set of a normed space $X_i$,
	$\famiI{Y_i}$ another family of normed spaces,
	$\GewFunk \sub \cl{\R}^{\disjointUI{U_i}}$ a nonempty family of weights
	defined on the disjoint union $\disjointUI{U_i}$ of $\famiI{U_i}$,
	and $k \in \cl{\N}$.
	For $i \in I$ and $f \in \GewFunk$, we set $f_i \ndef \rest{f}{U_i}$,
	and further $\GewFunk_i \ndef \set{f_i}{f \in \GewFunk}$.
	We define
	\[
		\CcFproI{U_i}{Y_i}{k}
		\ndef	
		\set{\famiI{\phi_i} \in \prod_{i \in I} \FC{U_i}{Y_i}{k} }%
		{(\forall f \in \GewFunk, \ell \in \N, \ell \leq k)\, \sup_{i \in I} \hn{\phi_i}{f_i}{\ell} < \infty}.
	\]
	The seminorms that generate the topology on this space then are
	\[
		\hn{\famiI{\phi_i} }{f}{\ell}
		\ndef
		\sup_{i \in I} \hn{\phi_i}{f_i}{\ell},
	\]
	where $f \in \GewFunk$ and $\ell \in \N$ with $\ell \leq k$.
	
	Further, we define the \emph{maximal extension}
	$\ExtWeights{\GewFunk} \subseteq \cl{\R}^{\disjointUI{U_i}}$ of $\GewFunk$
	as the set of functions $f$ for which $\hn{\cdot}{f}{0}$ is a continuous seminorm
	on $\CcFproI{U_i}{Y_i}{0}$, for each family $\famiI{Y_i}$ of normed spaces.
	Obviously $\GewFunk \subseteq \ExtWeights{\GewFunk}$
	and we can show that $\hn{\cdot}{f}{\ell}$ is a continuous seminorm on each $\CcFproI{U_i}{Y_i}{\ell}$,
	provided that $f \in \ExtWeights{\GewFunk}$ and $\ell \leq k$.
\end{defi}
A useful tool for dealing with these spaces is the following, found in \cite[§ 4.2.1]{Walter_Diss_p1}: 
\begin{prop}\label{prop:Zerlegungssatz_Familie}
	Let $k \in \N$. Then for $\famiI{\phi_i} \in \prod_{i \in I} \FC{U_i}{Y_i}{1}$,
	we have
	\[
		\famiI{\phi_i} \in \CcFproI{U_i}{Y_i}{k + 1}
		\iff
		\famiI{\phi_i} \in \CcFproI{U_i}{Y_i}{0}
		\text{ and }
		\famiI{\FAbl{\phi_i}} \in \CcFproI{U_i }{ \Lin{X_i}{Y_i} }{k}.
	\]
	The map
	\begin{equation*}
		\CcFproI{U_i}{Y_i}{k + 1}
		\to
		\CcFproI{U_i}{Y_i}{0}
		\times
		\CcFproI{U_i }{ \Lin{X_i}{Y_i} }{k}
		:
		(\famiI{\phi_i}) \mapsto (\famiI{\phi_i}, \famiI{\FAbl{\phi_i} })
	\end{equation*}
	is linear and a topological embedding.
\end{prop}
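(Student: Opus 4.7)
The proof is essentially a bookkeeping exercise with seminorms. The key identity is that for $\phi \in \FC{U}{Y}{\ell+1}$ with $U$ open in a normed space $X$, the $(\ell+1)$-th Fréchet derivative $\FAbl[\ell+1]{\phi}(x)$ coincides, under the canonical isometric isomorphism $\Lin[\ell+1]{X}{Y} \iso \Lin[\ell]{X}{\Lin{X}{Y}}$ between continuous multilinear maps, with $\FAbl[\ell]{(\FAbl{\phi})}(x)$. Hence $\Opnorm{\FAbl[\ell+1]{\phi}(x)} = \Opnorm{\FAbl[\ell]{(\FAbl{\phi})}(x)}$ for every $x$, and consequently
\[
	\hn{\phi_i}{f_i}{\ell + 1} = \hn{\FAbl{\phi_i}}{f_i}{\ell}
\]
for every weight $f \in \GewFunk$, every index $i \in I$ and every $\ell \in \N$.

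Given this, the equivalence of the two conditions is immediate. By definition, $\famiI{\phi_i}$ lies in $\CcFproI{U_i}{Y_i}{k+1}$ iff $\sup_{i\in I}\hn{\phi_i}{f_i}{\ell} < \infty$ for every $f \in \GewFunk$ and every $\ell \in \N$ with $\ell \leq k+1$. Splitting this condition into the case $\ell = 0$ and the cases $1 \leq \ell \leq k+1$, and rewriting the latter via the identity above as $\sup_{i\in I}\hn{\FAbl{\phi_i}}{f_i}{m} < \infty$ for $0 \leq m \leq k$, yields precisely $\famiI{\phi_i} \in \CcFproI{U_i}{Y_i}{0}$ together with $\famiI{\FAbl{\phi_i}} \in \CcFproI{U_i}{\Lin{X_i}{Y_i}}{k}$. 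The hypothesis $\phi_i \in \FC{U_i}{Y_i}{1}$ is only needed so that $\FAbl{\phi_i}$ is well-defined when $k = 0$; for $k \geq 1$ it is already implied.

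For the topological embedding claim, linearity is clear and injectivity follows from the first factor alone. Continuity of the map amounts to the observation that every generating seminorm of the target space --- either $\hn{\cdot}{f}{0}$ on $\CcFproI{U_i}{Y_i}{0}$ or $\hn{\cdot}{f}{m}$ with $m \leq k$ on $\CcFproI{U_i}{\Lin{X_i}{Y_i}}{k}$ --- is dominated by (in fact equal to) a single generating seminorm of the source, namely $\hn{\cdot}{f}{0}$ or $\hn{\cdot}{f}{m+1}$, respectively. Conversely, every generating seminorm $\hn{\cdot}{f}{\ell}$ of the source with $\ell \leq k+1$ coincides with one of the generating seminorms on the product: the case $\ell = 0$ is the first factor, and the case $\ell \geq 1$ gives $\hn{\cdot}{f}{\ell-1}$ on the second factor. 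Hence the inverse on the image is also continuous, and the map is a topological embedding. I do not expect any real obstacle here; the only subtle point is the isometric identification of iterated $\Lin$-spaces used in the initial identity, which is standard in calculus on locally convex spaces.
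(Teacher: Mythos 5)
Your proof is correct; the paper itself does not reprove this proposition but cites it from \cite{Walter_Diss_p1}, and your argument --- reducing everything to the exact identity $\hn{\phi_i}{f_i}{\ell+1}=\hn{\FAbl{\phi_i}}{f_i}{\ell}$ via the isometric identification of $\FAbl[\ell+1]{\phi_i}(x)$ with $\FAbl[\ell]{(\FAbl{\phi_i})}(x)$, then matching generating seminorms in both directions --- is exactly the standard decomposition argument used for these weighted function spaces. The only cosmetic point is that the $\ConDiff{}{}{1}$ hypothesis is what makes the right-hand side of the equivalence well-formed for every $k$ (not just $k=0$), since one also needs the classical fact that $\phi$ is $\ConDiff{}{}{k+1}$ if and only if $\phi$ is $\ConDiff{}{}{1}$ and $\FAbl{\phi}$ is $\ConDiff{}{}{k}$; this does not affect the validity of your argument.
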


\paragraph{Adjusting weights and open subsets of restricted products}

We give the definition of adjusting weights, and provide some results that will later be used.
These weights are important because they ensure
there are sufficiently many open subsets in restricted products
to make them useful as model spaces for topologies on vector fields.

Further information and proofs for the claims can be found in \cite[§ 4.2.2]{Walter_Diss_p1}.

\begin{defi}[Adjusting weights]
	Let $\famiI{U_i}$ and $\famiI{r_i}$ be families such that each $U_i$
	is an open nonempty set of the normed space $X_i$, and each $r_i \in ]0, \infty]$.
	We say that $\omega : \disjointUI{U_i} \to \R$ is
	an \emph{adjusting weight for $\famiI{r_i}$} if for each $i \in I$,
	we have that
	\[
		\sup_{x \in U_i} \abs{\omega_i(x)} < \infty
		\qquad\text{and}\qquad
		\inf_{x \in U_i} \abs{\omega_i(x)} \geq \max\bigl(\tfrac{1}{r_i}, 1\bigr).
	\]
	Notice that generally, $\omega$ itself is \emph{not} bounded.
\end{defi}
\begin{defi}
	Let $\famiI{U_i}$ and $\famiI{V_i}$ be families such that each $U_i$
	is an open nonempty set of the normed space $X_i$
	and each $V_i$ is an open nonempty subset of a normed space $Y_i$,
	$\GewFunk \sub \cl{\R}^{\disjointUI{U_i}}$ a nonempty set
	and $k \in \cl{\N}$.
	Let $\omega : \disjointUI{U_i} \to \R$ with $0 \notin \omega(\disjointUI{U_i})$.
	We set
	\begin{multline*}
		\CcFfoPROi{U_i}{V_i}{k}{\omega}
		\ndef
		\set{\famiI{\gamma_i} \in \CcFproI{U_i}{Y_i}{k}}%
		{\\(\exists r > 0)(\forall i \in I, x \in U_i)\,\gamma_i(x) + \Ball[Y_i]{0}{\tfrac{r}{\abs{\omega(x)}}} \sub V_i}.
	\end{multline*}
	In particular, we define
	\[
		\CcFoPROi{U_i}{V_i}{k} \ndef \CcFfoPROi{U_i}{V_i}{k}{(1_{\disjointUI{U_i}})}.
	\]
	Additionally, if each $V_i$ is star-shaped with center $0$,
	then $\omega$ is called an \emph{adjusting weight for $\famiI{V_i}$} if it is an adjusting weight for
	$
		\famiI{\dist{\sset{0}}{\partial V_i}}.
	$
	If it is clear to which family $\omega$ adjusts, we may call $\omega$ just an adjusting weight.
\end{defi}
The next two results show that the sets $\CcFfoPROi{U_i}{V_i}{k}{\omega}$ have nonempty interior,
provided that $\GewFunk$ contains an adjusting weights for $\famiI{V_i}$.
In fact, we can show that they contain a ball w.r.t. the adjusting weight.
\begin{lem}\label{lem:est_1-0-norm_f-0-norm}
	Let $X$ and $Y$ be normed spaces, $U \sub X$ an open nonempty set,
	$f  : U \to \cl{\R}$ such that
	$\inf_{x \in U} \abs{f(x)} \geq \max(\tfrac{1}{d}, 1)$ (where $d > 0$)
	and $\phi, \psi : U \to Y$. Then
	\begin{equation}\label{est:1-0-norm_f-0-norm_spezielles-f}
		\hn{\phi - \psi}{1_U}{0} \leq \min(d, 1) \hn{\phi - \psi}{f}{0}.
	\end{equation}
\end{lem}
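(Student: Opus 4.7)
The plan is to unfold the definitions of the two weighted seminorms and reduce the inequality to a pointwise estimate. Recall that, for $\ell = 0$, the seminorm $\hn{\cdot}{h}{0}$ on a function $\eta : U \to Y$ is simply $\sup_{x \in U}\abs{h(x)}\,\norm{\eta(x)}$, since the zeroth derivative has operator norm equal to the norm of the value. Applying this to both sides of the claimed inequality, the statement reduces to
\[
	\sup_{x \in U} \norm{\phi(x) - \psi(x)}
	\leq
	\min(d,1)\,\sup_{x \in U} \abs{f(x)}\,\norm{\phi(x) - \psi(x)}.
\]

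First I would observe the elementary identity $\max(\tfrac{1}{d}, 1) = \tfrac{1}{\min(d,1)}$ for $d > 0$. Together with the hypothesis, this yields $\abs{f(x)} \geq \tfrac{1}{\min(d,1)}$ for all $x \in U$, equivalently $\min(d,1)\abs{f(x)} \geq 1$. Multiplying by the nonnegative number $\norm{\phi(x) - \psi(x)}$ gives the pointwise estimate
\[
	\norm{\phi(x) - \psi(x)} \leq \min(d,1)\abs{f(x)}\,\norm{\phi(x) - \psi(x)}
\]
for every $x \in U$. Taking the supremum over $x \in U$ on both sides yields \refer{est:1-0-norm_f-0-norm_spezielles-f}.

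There is no real obstacle here; the only mild subtlety is that $f$ takes values in $\cl{\R}$, so $\abs{f(x)}$ may be $+\infty$. In that case both sides of the pointwise estimate at such $x$ are either zero (if $\phi(x) = \psi(x)$) or $+\infty$, so the inequality still holds with the standard conventions, and the passage to the supremum is unaffected.
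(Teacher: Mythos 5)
Your proof is correct and is the direct argument the paper intends (the paper itself omits the proof, deferring to \cite[\S~4.2.2]{Walter_Diss_p1}): the identity $\max(\tfrac{1}{d},1)=\tfrac{1}{\min(d,1)}$ gives the pointwise bound $\norm{\phi(x)-\psi(x)}\leq \min(d,1)\abs{f(x)}\,\norm{\phi(x)-\psi(x)}$, and taking suprema yields the claim. Your remark on the case $\abs{f(x)}=\infty$ is also handled correctly.
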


\begin{lem}
	Let $\famiI{U_i}$ and $\famiI{V_i}$ be families such that each $U_i$
	is an open nonempty set of a normed space $X_i$
	and each $V_i$ is open and a star-shaped subset with center $0$ of a normed space $Y_i$,
	$k \in \cl{\N}$, $f : \disjointUI{U_i} \to \R$ an adjusting weight for $\famiI{V_i}$
	and $\GewFunk \sub \cl{\R}^{\disjointUI{U_i}}$ with $f \in \GewFunk$.
	Then $\CcFfoPROi{U_i}{V_i}{k}{f}$ is not empty. In particular, for $\tau > 0$ we have
		\begin{equation}\label{incl:1-Kugel_f0-norm_sub_CFof}
			\set{\eta \in \CcFproI{U_i}{Y_i}{k}}{ \hn{\eta}{f}{0} < \tau}
			\sub \CcFfoPROi{U_i}{\tau \cdot V_i}{k}{f}.
		\end{equation}
\end{lem}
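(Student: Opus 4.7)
The plan is to prove the inclusion first; the non-emptiness then follows immediately by applying the inclusion with $\tau = 1$ to the zero family $\famiI{0} \in \CcFproI{U_i}{Y_i}{k}$, whose weighted seminorm $\hn{\famiI{0}}{f}{0} = 0$ is trivially strictly smaller than $1$.

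To establish the inclusion, I would fix $\eta = \famiI{\eta_i} \in \CcFproI{U_i}{Y_i}{k}$ with $\rho \ndef \hn{\eta}{f}{0} < \tau$. The task is to exhibit a single $r > 0$ such that $\eta_i(x) + \Ball[Y_i]{0}{r/\abs{f(x)}} \sub \tau V_i$ holds for every $i \in I$ and every $x \in U_i$. I would pick any $r$ with $0 < r < \tau - \rho$.

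Setting $d_i \ndef \dist{\sset{0}}{\partial V_i}$, the hypothesis that $f$ adjusts to $\famiI{V_i}$ gives $\abs{f_i(x)} \geq \max(1/d_i, 1) \geq 1/d_i$ for all $x \in U_i$, hence $1/\abs{f_i(x)} \leq d_i$. For $z \in \eta_i(x) + \Ball[Y_i]{0}{r/\abs{f(x)}}$, the triangle inequality together with $\norm{\eta_i(x)} \leq \rho/\abs{f_i(x)}$ (which is a direct consequence of the definition of $\rho$) yields
\[
	\norm{z} \leq \norm{\eta_i(x)} + \frac{r}{\abs{f_i(x)}} \leq \frac{\rho + r}{\abs{f_i(x)}} < \frac{\tau}{\abs{f_i(x)}} \leq \tau d_i.
\]
Since $V_i$ is open and star-shaped with center $0$, one has $\Ball[Y_i]{0}{d_i} \sub V_i$; scaling by $\tau$ gives $\Ball[Y_i]{0}{\tau d_i} \sub \tau V_i$, and therefore $z \in \tau V_i$. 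As the chosen $r$ was independent of $i$ and $x$, this proves the inclusion.

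The only mildly delicate point is the auxiliary observation that $\Ball[Y_i]{0}{d_i} \sub V_i$ whenever $V_i$ is open and star-shaped with center $0$. This follows by considering, for any $z$ in the ball, the supremum $t^\ast \in (0, 1]$ of those $t$ for which $[0, t z] \sub V_i$; openness forces $t^\ast z \in \partial V_i$ as soon as $t^\ast \leq 1$, but this contradicts the definition of $d_i$ because $\norm{t^\ast z} \leq \norm{z} < d_i$. Otherwise the proof is a bookkeeping exercise matching the adjusting-weight estimate against the star-shaped geometry; no appeal to \refer{prop:Zerlegungssatz_Familie} or to higher derivatives is needed since the condition defining $\CcFfoPROi{U_i}{V_i}{k}{f}$ in addition to membership in $\CcFproI{U_i}{Y_i}{k}$ only constrains values, not derivatives.
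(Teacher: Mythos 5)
Your proof is correct. The paper itself does not prove this lemma but defers to \cite[\S~4.2.2]{Walter_Diss_p1}; your argument — reducing non-emptiness to the inclusion applied to the zero family, and proving the inclusion by choosing $r < \tau - \hn{\eta}{f}{0}$, combining the adjusting-weight bound $\abs{f_i(x)} \geq \max(1/d_i,1)$ with the triangle inequality, and using that an open star-shaped set with center $0$ contains the open ball of radius $\dist{\sset{0}}{\partial V_i}$ — is exactly the natural argument the statement is designed for, and every step checks out (including the degenerate case $\partial V_i = \emptyset$, where $d_i = \infty$ makes the final containment trivial).
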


\subsubsection{Smooth maps between restricted products}

We provide some of the results about smooth functions between the restricted products.
Before we do this, we define what smooth should mean.
We use the differential calculus for maps between locally convex spaces
that is known as Kellers $C^k_c$-theory.
This calculus was introduced in \cite{MR0177277} resp. \cite{MR0440592},
and is widely used to study maps between infinite-dimensional locally convex spaces,
for example in the works \cite{MR830252}, \cite{MR583436}, \cite{MR1911979} or \cite{MR2261066}.
\begin{defi}
	Let $\SX$ and $\SY$ be locally convex spaces,
	$\UF \subseteq \SX$ an open nonempty set
	and $f:\UF\to\SY$ a map.
	We say that $f$ is \emph{$\ConDiff{}{}{1}$} if for all $u\in\UF$ and $x\in\SX$,
	the directional derivative	
	\[
		\lim_{\substack{t\to 0\\ t \neq 0}}\frac{f(u + t x) - f(u)}{t}
			\defn \dA{f}{u}{x},
	\]
	exists and the map $\dA{f}{}{}  : \UF \times \SX \to \SY$ is continuous.
	Inductively, for a $k\in\N$ we call $f$ \emph{$\ConDiff{}{}{k}$}
	if $f$ is $\ConDiff{}{}{1}$ and
	$\dA{f}{}{} :\UF\times\SX\to\SY$
	is a $\ConDiff{}{}{k - 1}$-map.
	We write $\ConDiff{\UF}{\SY}{k}$ for the set of $k$-times differentiable maps.
\end{defi}

\paragraph{Superposition results}

We provide some results on simultaneous superposition operations on restricted products.
They are proved in \cite[§ 4.3.2, 4.3.3]{Walter_Diss_p1}.
\begin{lem}\label{lem:simultane_mult-multiplier}
	Let $\famiI{U_i}$ be a family such that each $U_i$
	is an open nonempty set of the normed space $X_i$,
	and $\famiI{Y_i^1}$, $\famiI{Y_i^2}$, $\famiI{Z_i}$ be families of normed spaces.
	Further, for each $i \in I$ let $M_i : U_i \to Y_i^1$ be smooth,
	and $\beta_i : Y_i^1 \times Y_i^2 \to Z_i$ a bilinear map such that
	\[
		\sup\set{\Opnorm{\beta_i}}{i \in I} < \infty .
	\]
	Assume that $\GewFunk \sub \cl{\R}^{\disjointUI{U_i}}$ is nonempty and
	\begin{equation}\label{cond:est_sim-multiplier_weights}
			(\forall f \in \GewFunk, \ell \in \N)
			(\exists g \in \ExtWeights{\GewFunk})
			\, (\forall i \in I)\, \hn{M_i}{1_{U_i}}{\ell} \abs{f_i} \leq \abs{g_i}.
	\end{equation}
	Then for $k \in \cl{\N}$, the map
	\[
				\CcFproI{U_i}{Y_i^2}{k}
		\to
		\CcFproI{U_i}{Z_i}{k}
		:
		\famiI{\gamma_i} \mapsto \famiI{\beta_i \circ (M_i, \gamma_i)}
	\]
	is defined and continuous linear.
\end{lem}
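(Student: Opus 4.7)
The plan is to prove linearity, well-definedness, and continuity together by deriving a direct seminorm estimate. Linearity of the map follows immediately from the linearity of each $\beta_i$ in its second argument, so everything reduces to bounding the output seminorms by continuous seminorms on the domain.

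Fix $f \in \GewFunk$ and $\ell \in \N$ with $\ell \leq k$. For each $i \in I$ and $\phi_i \in \FC{U_i}{Y_i^2}{\ell}$, the higher-order Leibniz rule for the bilinear pairing $\beta_i$ yields
\[
    \FAbl[\ell]{(\beta_i \circ (M_i, \phi_i))}(x) = \sum_{m=0}^{\ell} \binom{\ell}{m}\, \tilde{\beta}_i\bigl(\FAbl[m]{M_i}(x),\, \FAbl[\ell-m]{\phi_i}(x)\bigr),
\]
where $\tilde{\beta}_i$ is the bilinear pairing on the corresponding spaces of symmetric multilinear maps induced by $\beta_i$; crucially $\Opnorm{\tilde{\beta}_i} \leq \Opnorm{\beta_i}$. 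Setting $C \ndef \sup_i \Opnorm{\beta_i} < \infty$ and taking operator norms pointwise,
\[
    \Opnorm{\FAbl[\ell]{(\beta_i \circ (M_i, \phi_i))}(x)} \leq C \sum_{m=0}^{\ell} \binom{\ell}{m} \Opnorm{\FAbl[m]{M_i}(x)} \cdot \Opnorm{\FAbl[\ell-m]{\phi_i}(x)}.
\]

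Now bound $\Opnorm{\FAbl[m]{M_i}(x)} \leq \hn{M_i}{1_{U_i}}{m}$ and invoke the hypothesis \eqref{cond:est_sim-multiplier_weights}: for each $m \in \sset{0,\ldots,\ell}$ there is $g^{(m)} \in \ExtWeights{\GewFunk}$ with $\hn{M_i}{1_{U_i}}{m}\abs{f_i(x)} \leq \abs{g^{(m)}_i(x)}$ for all $i \in I$ and $x \in U_i$. Multiplying the displayed estimate by $\abs{f_i(x)}$ and taking the supremum over $x \in U_i$ and then over $i \in I$ yields
\[
    \hn{(\beta_i \circ (M_i, \phi_i))_{i \in I}}{f}{\ell} \leq C \sum_{m=0}^{\ell} \binom{\ell}{m}\, \hn{(\phi_i)_{i\in I}}{g^{(m)}}{\ell - m}.
\]
Since each $g^{(m)} \in \ExtWeights{\GewFunk}$, the right-hand side is a continuous seminorm on $\CcFproI{U_i}{Y_i^2}{k}$ (by the defining property of $\ExtWeights{\GewFunk}$ in \refer{def:res_prods-max_weights}). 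This simultaneously proves that the output family lies in $\CcFproI{U_i}{Z_i}{k}$ (all its seminorms are finite) and that the map is continuous.

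The only nontrivial step is the higher-order Leibniz formula with the operator-norm bound quoted above; I expect this to be the main technical point, but it follows from a straightforward induction on $\ell$ using that the Fréchet derivative of $x \mapsto \beta_i(A(x),B(x))$ is $\beta_i(\FAbl{A}(x)(\cdot), B(x)) + \beta_i(A(x), \FAbl{B}(x)(\cdot))$ and that inserting a multilinear map into one slot of $\beta_i$ does not increase the operator norm. Everything else is just bookkeeping with the weighted seminorms.
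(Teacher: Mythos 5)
The paper does not actually prove this lemma; it defers to \cite[§~4.3.2, 4.3.3]{Walter_Diss_p1}, so there is no in-paper argument to measure you against. Your proof is correct as it stands: linearity is immediate from bilinearity of $\beta_i$ in the second slot, and the higher-order Leibniz formula, combined with one application of \ref{cond:est_sim-multiplier_weights} for each order $m \in \sset{0, \dotsc, \ell}$ to produce $g^{(m)} \in \ExtWeights{\GewFunk}$, gives
\[
	\hn{\famiI{\beta_i \circ (M_i, \phi_i)}}{f}{\ell}
	\leq C \sum_{m = 0}^{\ell} \binom{\ell}{m}\, \hn{\famiI{\phi_i}}{g^{(m)}}{\ell - m},
\]
which settles well-definedness and continuity simultaneously, since the seminorms attached to elements of $\ExtWeights{\GewFunk}$ are continuous on $\CcFproI{U_i}{Y_i^2}{k}$ by the remark in \refer{def:res_prods-max_weights}. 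The route that the paper's own toolkit suggests (and that the cited source presumably follows) is different in organization: an induction on $k$ via \refer{prop:Zerlegungssatz_Familie}, checking the $C^0$ case directly and then applying the first-order product rule $\dA{(\beta_i \circ (M_i, \gamma_i))}{x}{v} = \beta_i(\FAbl{M_i}(x)\eval v, \gamma_i(x)) + \beta_i(M_i(x), \FAbl{\gamma_i}(x)\eval v)$ to pass from $k$ to $k + 1$, splitting the right-hand side into two maps each of which is again of the form covered by the lemma (for new families of bilinear pairings with uniformly bounded norms). That inductive route avoids having to state and justify the general Leibniz formula with the symmetrized pairing $\tilde{\beta}_i$ and the bound $\Opnorm{\tilde{\beta}_i} \leq \Opnorm{\beta_i}$ --- the one step in your argument that itself requires a (routine) induction --- at the cost of some bookkeeping to verify that the two summands again satisfy the hypotheses; your version buys an explicit closed-form seminorm estimate in a single pass. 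Both are sound.
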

\begin{prop}\label{prop:simultane_SP_BCinf0_Produkt}
	Let $\famiI{U_i}$ and $\famiI{V_i}$ be families such that each $U_i$
	is an open nonempty set of the normed space $X_i$
	and each $V_i$ is an open, star-shaped subset with center $0$ of a normed space $Y_i$.
	Further, let $\famiI{Z_i}$ be another family of normed spaces
	and $\GewFunk \sub \cl{\R}^{\disjointUI{U_i}}$ contain an adjusting weight $\omega$.
	For each $i \in I$, let $\beta_i \in \FC{ U_i \times V_i}{ Z_i}{\infty}$
	with $\beta_i(U_i \times \sset{0}) = \sset{0}$.
	Further, assume that
	\begin{equation}\label{cond:est_SP-Abb_weights}
			(\forall f \in \GewFunk, \ell \in \N^*)
			(\exists g \in \ExtWeights{\GewFunk})
			\, (\forall i \in I)\, \hn{\beta_i}{1_{U_i \times V_i}}{\ell} \abs{f_i} \leq \abs{g_i}
	\end{equation}
	is satisfied.
	Then for $k \in \cl{\N}$, the map
	\[
		\beta_* \ndef \prod_{i \in I} (\beta_i)_* : \CcFfoPROi{U_i}{V_i}{k}{\omega}
		\to
		\CcFproI{U_i}{Z_i}{k}
		:
		\famiI{\gamma_i} \mapsto \famiI{\beta_i \circ (\id{U_i}, \gamma_i)}
	\]
	is defined and smooth.
\end{prop}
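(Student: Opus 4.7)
The plan is to establish the proposition in three stages: well-definedness (that $\beta_*$ lands in $\CcFproI{U_i}{Z_i}{k}$ with uniform-in-$i$ seminorm bounds), continuity, and smoothness by induction on the order of differentiability.

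First we verify well-definedness. Fix $\famiI{\gamma_i} \in \CcFfoPROi{U_i}{V_i}{k}{\omega}$. By \refer{prop:Zerlegungssatz_Familie}, it suffices to produce uniform bounds
\begin{equation*}
\sup_{i \in I} \hn{\beta_i \circ (\id{U_i}, \gamma_i)}{f}{\ell} < \infty
\end{equation*}
for all $f \in \cW$ and all $\ell \leq k$. The Faà di Bruno formula writes $\FAbl[\ell]{(\beta_i \circ (\id{U_i}, \gamma_i))}(x)$ as a finite sum of terms built from $\FAbl[m]{\beta_i}(x, \gamma_i(x))$ contracted with tuples of the form $\FAbl[b_j]{\gamma_i}(x)$. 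The vanishing $\beta_i(U_i \times \sset{0}) = \sset{0}$ is exploited via the integral representation
\begin{equation*}
\beta_i(x, y) = \Mint{(D_2 \beta_i)(x, ty)(y)}{t},
\end{equation*}
which forces each otherwise purely $x$-dependent leading term to acquire an explicit factor of $\gamma_i(x)$. The hypothesis \eqref{cond:est_SP-Abb_weights} then lets one absorb $\abs{f(x)} \Opnorm{\FAbl[m]{\beta_i}(x, \gamma_i(x))}$ into a single weight $\abs{g(x)}$ with $g \in \ExtWeights{\GewFunk}$, and the definition of $\CcFfoPROi{U_i}{V_i}{k}{\omega}$ together with \refer{lem:est_1-0-norm_f-0-norm} bounds $\norm{\gamma_i(x)}$ in terms of $1/\abs{\omega(x)}$, so the remaining product of factors is controlled uniformly in $i$.

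Continuity on each small ball inside $\CcFfoPROi{U_i}{V_i}{k}{\omega}$ follows from the mean value theorem applied to $\beta_i$ in its second slot,
\begin{equation*}
\beta_i(x, \gamma_i(x)) - \beta_i(x, \tilde\gamma_i(x)) = \Mint{(D_2 \beta_i)\bigl(x, \tilde\gamma_i(x) + t(\gamma_i(x) - \tilde\gamma_i(x))\bigr)\bigl(\gamma_i(x) - \tilde\gamma_i(x)\bigr)}{t},
\end{equation*}
paired with the analogous pointwise formulae for the derivatives, obtained inductively through \refer{prop:Zerlegungssatz_Familie} and estimated exactly as above. For smoothness we compute the Gateaux derivative pointwise:
\begin{equation*}
\dA{\beta_*}{\famiI{\gamma_i}}{\famiI{\eta_i}}(x) = \famiI{(D_2 \beta_i)(x, \gamma_i(x))(\eta_i(x))}.
\end{equation*}
The crucial observation is that, setting $\tilde\beta_i : U_i \times (V_i \times Y_i) \to Z_i$ by $\tilde\beta_i(x, y, z) := (D_2 \beta_i)(x, y)(z)$, the map $\tilde\beta_i$ vanishes on $U_i \times V_i \times \sset{0}$ and inherits a version of \eqref{cond:est_SP-Abb_weights} from that for $\beta_i$. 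Hence $d\beta_*$, after restricting to a product of balls, is itself a superposition operator of the same type as $\beta_*$. Induction on the order of differentiability, using \refer{prop:Zerlegungssatz_Familie} to reduce $\ConDiff{}{}{r}$-regularity of $\beta_*$ to $\ConDiff{}{}{r-1}$-regularity of an operator of the same kind applied to $(\gamma, \eta)$, then delivers smoothness of all orders.

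The main obstacle will be the uniform-in-$i$ bookkeeping: every summand produced by Faà di Bruno comes with its own weight combination, and one must check at each step that all products and sums of weights that arise remain in $\ExtWeights{\GewFunk}$, so that \refer{def:res_prods-max_weights} applies to give continuous seminorms. Equally delicate is the verification that the auxiliary map $\tilde\beta_i$ in the inductive step genuinely inherits \eqref{cond:est_SP-Abb_weights} from $\beta_i$ with the \emph{same} weight family $\GewFunk$ — this needs the order shift in $\ell$ to be absorbed into the choice of dominating weight $g \in \ExtWeights{\GewFunk}$, and is the place where the proof is least mechanical.
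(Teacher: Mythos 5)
The paper does not actually prove this proposition; it is imported from the author's earlier work with the proof deferred to \cite[\S~4.3.2--4.3.3]{Walter_Diss_p1}, so there is no in-paper argument to compare yours against line by line. That said, your outline follows what is evidently the intended route, and I see no step that would fail. The two load-bearing ideas are both present and correctly placed. First, since condition \ref{cond:est_SP-Abb_weights} only controls $\hn{\beta_i}{1_{U_i \times V_i}}{\ell}$ for $\ell \geq 1$, the vanishing $\beta_i(U_i \times \sset{0}) = \sset{0}$ must be converted --- via the representation $\beta_i(x,y) = \Mint{\parFAbl{2}{\beta_i}(x, t y)\eval y}{t}$, which uses the star-shapedness of $V_i$ --- into an explicit $\gamma_i$-factor not only in the zeroth-order term but in \emph{every} Fa\`a di Bruno summand whose inner derivatives all land on the identity component (e.g.\ the term $\parFAbl{1}{\beta_i}(x, \gamma_i(x))$ at order one, which would otherwise only be dominated by a possibly unbounded $g$). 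Your phrase ``each otherwise purely $x$-dependent leading term'' covers this, but it is the step most worth writing out. Second, the derivative of $\beta_*$ is again a superposition operator with outer map $\widetilde{\beta}_i(x, y, z) = \parFAbl{2}{\beta_i}(x,y)\eval z$ restricted to a ball in $z$; it vanishes for $z = 0$ and satisfies \ref{cond:est_SP-Abb_weights} at order $\ell$ because $\beta_i$ satisfies it at order $\ell + 1$ --- precisely the mechanism the paper itself records in \refer{lem:vergleich_Bedingungen_simultane-multiplier_simu-Supo} --- so the induction closes.

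Two points to make explicit in a full writeup. The unweighted factors $\Opnorm{\FAbl[b_j]{\gamma_i}(x)}$ left over in each Fa\`a di Bruno term after the weight $f$ has been absorbed into the $\beta_i$-derivative are finite uniformly in $i$ only because the adjusting weight satisfies $\abs{\omega} \geq 1$, whence $\hn{\gamma_i}{1_{U_i}}{b} \leq \hn{\gamma_i}{\omega_i}{b}$; your appeal to \refer{lem:est_1-0-norm_f-0-norm} is the right tool, but you invoke it to bound $\norm{\gamma_i(x)}$ by $1/\abs{\omega(x)}$, which is not quite the estimate needed here. And in the difference quotient defining $\dA{\beta_*}{\famiI{\gamma_i}}{\famiI{\eta_i}}$ one needs $\gamma_i(x) + t \eta_i(x) \in V_i$ uniformly in $i$ and $x$ for small $t$; this is exactly what the margin $r/\abs{\omega(x)}$ in the definition of $\CcFfoPROi{U_i}{V_i}{k}{\omega}$ together with $\hn{\eta}{\omega}{0} < \infty$ provides, and it is the one place where restricting the domain to $\CcFfoPROi{U_i}{V_i}{k}{\omega}$ rather than all of $\CcFproI{U_i}{Y_i}{k}$ is genuinely used.
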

The following small lemma which will be useful later.
\begin{lem}\label{lem:vergleich_Bedingungen_simultane-multiplier_simu-Supo}
	Let $\famiI{U_i}$ be a family such that each $U_i$
	is an open nonempty set of a normed space $X_i$
	and $\famiI{Y_i}$ a family of normed spaces.
	Further, for each $i \in I$ let $\beta_i : U_i \to Y_i$ be a smooth map
	and $\GewFunk \sub \cl{\R}^{\disjointUI{U_i}}$ such that \ref{cond:est_sim-multiplier_weights}
	is satisfied for $\famiI{\FAbl{\beta_i}}$.
	Then for each $R > 0$, $\famiI{\rest{\dA{\beta_i}{}{}}{U_i \times \Ball[X_i]{0}{R}}}$ satisfies \ref{cond:est_SP-Abb_weights}.
\end{lem}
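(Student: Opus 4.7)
The plan is to express the total derivatives of $\dA{\beta_i}{}{}\colon U_i\times X_i\to Y_i$ in terms of the derivatives of $\beta_i$, and then to invoke the hypothesis twice to absorb the two resulting terms into a single weight in $\ExtWeights{\GewFunk}$.

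I would start by recording that, since $\dA{\beta_i}{x}{v}=\FAbl{\beta_i}(x)(v)$ is linear in $v$ and smooth in $x$, a short induction (or a direct Leibniz-type computation) gives, for every $\ell\in\N^*$,
\[
	\FAbl[\ell]{\dA{\beta_i}{}{}}(x,v)\bigl((h_1,k_1),\dotsc,(h_\ell,k_\ell)\bigr)
	= \FAbl[\ell+1]{\beta_i}(x)(h_1,\dotsc,h_\ell,v)
	+ \sum_{j=1}^{\ell} \FAbl[\ell]{\beta_i}(x)(h_1,\dotsc,\hat{h}_j,\dotsc,h_\ell,k_j).
\]
Estimating this multilinear form in operator norm (with respect to the product norm on $X_i\times X_i$) and using $\norm{v}\leq R$ on the restricted domain, this yields the pointwise bound
\[
	\Opnorm{\FAbl[\ell]{\dA{\beta_i}{}{}}(x,v)}
	\leq R\,\Opnorm{\FAbl[\ell+1]{\beta_i}(x)} + \ell\,\Opnorm{\FAbl[\ell]{\beta_i}(x)},
\]
uniformly in $(x,v)\in U_i\times\Ball[X_i]{0}{R}$ and $i\in I$.

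Next, given $f\in\GewFunk$ and $\ell\in\N^*$, I would apply condition \ref{cond:est_sim-multiplier_weights} for the family $\famiI{\FAbl{\beta_i}}$ twice, at orders $\ell$ and $\ell-1$ in $\N$; this is exactly the step where the restriction $\ell\geq 1$ is essential and matches the range $\ell\in\N^*$ in the target condition \ref{cond:est_SP-Abb_weights}. It produces $g^1,g^2\in\ExtWeights{\GewFunk}$ with $\Opnorm{\FAbl[\ell+1]{\beta_i}(x)}\,\abs{f_i(x)}\leq\abs{g^1_i(x)}$ and $\Opnorm{\FAbl[\ell]{\beta_i}(x)}\,\abs{f_i(x)}\leq\abs{g^2_i(x)}$ for all $x\in U_i$ and $i\in I$. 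Multiplying the bound from the previous paragraph by $\abs{f_i(x)}$ then gives the required estimate with $g\defn R\abs{g^1}+\ell\abs{g^2}$, provided this $g$ still lies in $\ExtWeights{\GewFunk}$.

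To close the proof I would verify this closure property of $\ExtWeights{\GewFunk}$: for any family $\famiI{Y_i}$ of normed spaces and $\phi=\famiI{\phi_i}\in\CcFproI{U_i}{Y_i}{0}$, the triangle inequality gives $\hn{\phi}{R\abs{g^1}+\ell\abs{g^2}}{0}\leq R\,\hn{\phi}{g^1}{0}+\ell\,\hn{\phi}{g^2}{0}$, so $\hn{\cdot}{g}{0}$ is again a continuous seminorm and hence $g\in\ExtWeights{\GewFunk}$. The main obstacle will be the derivative formula in the first step — specifically, getting the correct combinatorial factor $\ell$ on the second term and keeping track of the single slot where $v$ appears — since once that is in place, the remainder of the argument reduces to invoking \ref{cond:est_sim-multiplier_weights} twice and performing the short subadditivity check above.
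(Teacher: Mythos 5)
Your argument is correct and complete: the Leibniz-type formula for $\FAbl[\ell]{\dA{\beta_i}{}{}}$ (one term carrying $v$ into the $(\ell+1)$-st derivative of $\beta_i$, plus $\ell$ terms of order $\ell$), the resulting bound $R\,\hn{\FAbl{\beta_i}}{1_{U_i}}{\ell}+\ell\,\hn{\FAbl{\beta_i}}{1_{U_i}}{\ell-1}$ on $U_i\times\Ball[X_i]{0}{R}$, the twofold application of \ref{cond:est_sim-multiplier_weights} at orders $\ell$ and $\ell-1$ (legitimate precisely because \ref{cond:est_SP-Abb_weights} only ranges over $\ell\in\N^*$), and the closure of $\ExtWeights{\GewFunk}$ under positive combinations all check out. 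The paper itself gives no proof of this lemma (it is quoted from the author's dissertation), so there is no in-paper argument to compare against, but yours is the natural and expected one.
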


\paragraph{Composition and inversion}

We finally provide results on simultaneous composition and inversion.
These results are proved in \cite[§ 4.4]{Walter_Diss_p1}.

\begin{prop}\label{prop:Simultane_Koor-Kompo_diffbar}
	Let $\famiI{U_i}$, $\famiI{V_i}$ and $\famiI{W_i}$ be families such that
	for each $i \in I$, $U_i$, $V_i$ and $W_i$ are open nonempty sets of the normed space $X_i$
	with $U_i + V_i \sub W_i$, and $V_i$ is balanced.
	Further, let $\famiI{Y_i}$ be another family of normed spaces
	and $\GewFunk \sub \cl{\R}^{\disjointUI{W_i}}$ contain an adjusting weight $\omega$ for $\famiI{V_i}$.
	Then for $k, \ell \in \cl{\N}$, the map
	\begin{equation*}
		\compIdDiffKLcW{Y}{k}{\ell}
		\ndef \prod_{i \in I} \compIdDiffKLWeights{Y_i}{k}{\ell}{\GewFunk_i}
		:\left\lbrace
		\begin{aligned}
			\CcFproI{W_i}{Y_i}{k + \ell + 1} \times \CcFfoPROi{U_i}{V_i}{k}{\omega}
			&\to \CcFproI{U_i}{Y_i}{k}
			\\
			(\famiI{\gamma_i}, \famiI{\eta_i}) &\mapsto \famiI{\gamma_i \circ (\eta_i + \id{U_i})}
		\end{aligned}\right.
	\end{equation*}
	is defined and $\ConDiff{}{}{\ell}$.
\end{prop}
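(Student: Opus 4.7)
The plan is to prove the result by induction on $\ell$ (treating $\ell = \infty$ at the end as the intersection over finite $\ell$). Before starting the induction, I would first verify that the map is well-defined: I apply Faà di Bruno's formula to expand $D^m(\gamma_i \circ (\eta_i + \id{U_i}))$ for $m \le k$ into a finite sum of products of factors $D^j \gamma_i \circ (\eta_i + \id{U_i})$ and derivatives of $\eta_i + \id{U_i}$. Since $\omega$ is an adjusting weight for $\famiI{V_i}$, the translates $\eta_i(u) + u$ stay uniformly inside $W_i$; combined with Lemma \ref{lem:est_1-0-norm_f-0-norm} to absorb the $\omega$-factors, this yields the uniform-in-$i$ weighted seminorm bounds placing the composition family in $\CcFproI{U_i}{Y_i}{k}$.

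For the base case $\ell = 0$ (continuity), I would use the fundamental theorem of calculus decomposition
\[
\gamma_i \circ (\eta_i + \id{U_i}) - \tilde\gamma_i \circ (\tilde\eta_i + \id{U_i}) = (\gamma_i - \tilde\gamma_i) \circ (\eta_i + \id{U_i}) + \int_0^1 D\tilde\gamma_i\bigl(\tilde\eta_i + \id{U_i} + t(\eta_i - \tilde\eta_i)\bigr)(\eta_i - \tilde\eta_i)\, dt.
\]
The first summand is handled by the well-definedness estimate applied to $\gamma - \tilde\gamma$; the second combines a uniform bound on $D\tilde\gamma$—which is precisely where the extra $+1$ in the source regularity $k + \ell + 1 = k + 1$ is needed—with a multiplier estimate in the spirit of Lemma \ref{lem:simultane_mult-multiplier}.

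For the inductive step I compute the directional derivative
\[
d\compIdDiffKLcW{Y}{k}{\ell}\bigl((\gamma, \eta); (\xi, \zeta)\bigr) = \famiI{\xi_i \circ (\eta_i + \id{U_i})} + \famiI{\bigl(D\gamma_i \circ (\eta_i + \id{U_i})\bigr) \cdot \zeta_i}.
\]
The first summand is $\compIdDiffKLcW{Y}{k}{\ell - 1}$ applied to $(\xi, \eta)$, hence $C^{\ell - 1}$ in $(\xi, \eta)$ by induction (using that $\CcFproI{W_i}{Y_i}{k + \ell + 1}$ embeds continuously into $\CcFproI{W_i}{Y_i}{k + \ell}$). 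For the second summand, Proposition \ref{prop:Zerlegungssatz_Familie} makes $\gamma \mapsto D\gamma$ a continuous linear map into the $\Lin{X_i}{Y_i}$-valued restricted product at regularity $k + \ell$; the inductive hypothesis applied to this target yields a $C^{\ell - 1}$ map $(\gamma, \eta) \mapsto \famiI{D\gamma_i \circ (\eta_i + \id{U_i})}$, and pointwise evaluation against $\zeta$ is a continuous bilinear pairing into $\CcFproI{U_i}{Y_i}{k}$, hence smooth. The composite is therefore $C^{\ell - 1}$ in $(\gamma, \eta, \zeta)$, as required.

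The main obstacle is the uniform-in-$i$ bookkeeping of weighted seminorms: one must track how a weight $f \in \GewFunk$ on $W_i$ pulls back under $u \mapsto \eta_i(u) + u$ to a weight on $U_i$ still dominated by an element of $\ExtWeights{\GewFunk}$, and verify that the bilinear evaluation $\Lin{X_i}{Y_i} \times X_i \to Y_i$ lifts to a continuous pairing of restricted products with a uniform-in-$i$ bound. Both reductions hinge on the adjusting weight $\omega$ and on the maximal-extension machinery from \cite{Walter_Diss_p1}; once these estimates are in place, the induction runs smoothly.
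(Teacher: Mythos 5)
The paper itself contains no proof of this proposition: it is quoted from \cite[\S~4.4]{Walter_Diss_p1} (see the sentence preceding it), so there is nothing in-text to compare your argument against line by line. Judged on its own merits, your skeleton is the natural one and is mostly sound: induction on $\ell$, the derivative formula $d\compIdDiffKLcW{Y}{k}{\ell}((\gamma,\eta);(\xi,\zeta)) = \famiI{\xi_i\circ(\eta_i+\id{U_i})} + \famiI{(\FAbl{\gamma_i}\circ(\eta_i+\id{U_i}))\eval\zeta_i}$, the reduction of the second summand to the inductive hypothesis via \ref{prop:Zerlegungssatz_Familie} followed by a continuous bilinear pointwise pairing in the spirit of \ref{lem:simultane_mult-multiplier} (using that $1\in\ExtWeights{\GewFunk}$, which follows from $\omega\in\GewFunk$ and \ref{lem:est_1-0-norm_f-0-norm}), and the correct identification of the loss of one derivative as the source of the $+1$ in $k+\ell+1$.

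The genuine gap is in the well-definedness step, and it is not mere bookkeeping. After Faà di Bruno, the terms you cannot dispose of are exactly those in which every slot of $\FAbl[j]{\gamma_i}(\eta_i(u)+u)[\cdots]$ is filled by the identity part of $\FAbl{(\eta_i+\id{U_i})}$, so that no factor $\FAbl[m]{\eta_i}(u)$ is available to carry the weight: one must bound $\abs{f(u)}\,\Opnorm{\FAbl[j]{\gamma_i}(\eta_i(u)+u)}$, where $f$ is evaluated at $u$ but $\gamma_i$ at the translated point. Your closing paragraph proposes to handle this by showing that $f$ ``pulls back'' under $u\mapsto\eta_i(u)+u$ to a weight dominated by an element of $\ExtWeights{\GewFunk}$ --- but no such domination holds for a general $\GewFunk$ (the quotient $f(u)/f(u+v)$ can be unbounded over arbitrarily small $v\neq 0$), so this route fails. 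The correct mechanism is the one you already deploy in your $\ell=0$ continuity estimate, and it must be used here too:
\begin{equation*}
	\FAbl[j]{\gamma_i}(\eta_i(u)+u)
	= \FAbl[j]{\gamma_i}(u) + \int_0^1 \FAbl[j+1]{\gamma_i}(u+t\,\eta_i(u))\eval\eta_i(u)\, dt ,
\end{equation*}
so that the first term is controlled by $\hn{\gamma_i}{f_i}{j}$ and in the second the weight lands on $\eta_i$, yielding the bound $\hn{\gamma_i}{1_{W_i}}{j+1}\,\hn{\eta_i}{f_i}{0}$; this is precisely what consumes the extra derivative of $\gamma$ and what uses $U_i+[0,1]\cdot V_i\sub W_i$, i.e.\ the balancedness of $V_i$. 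Once this decomposition is made explicit --- both for membership of the image in $\CcFproI{U_i}{Y_i}{k}$ and for the uniform-in-$i$ convergence of the difference quotients defining the directional derivative (where the perturbed argument stays admissible because $\hn{\zeta}{\omega}{0}<\infty$ and $\eta_i(u)+\Ball[X_i]{0}{r/\abs{\omega(u)}}\sub V_i$) --- the rest of your induction goes through.
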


\begin{prop}\label{prop:Simultane_Inv-Kompo_glatt}
	Let $\famiI{U_i}$ and $\famiI{\widetilde{U}_i}$ be families
	such that $U_i$ and $\widetilde{U}_i$ are open nonempty sets of the Banach space $X_i$
	and $U_i$ is convex.
	Further assume that there exists $r > 0$ such that $\widetilde{U}_i + \Ball[X_i]{0}{r} \sub U_i$
	for all $i \in I$.
	Let $\GewFunk \sub \cl{\R}^{\disjointUI{U_i}}$ with $1_{\disjointUI{U_i}} \in \GewFunk$
	and $\tau \in ]0, 1[$.
	Then the map
	\[
		\InvIdcW{\widetilde{U}}
		\ndef
		\prod_{i \in I}
		\InvIdWeights{\widetilde{U}_i}{\GewFunk_i}
		:
		\mathcal{D}^\tau
		\to
		\CcFproI{\widetilde{U}_i}{X_i}{\infty}
		:
		\famiI{\phi_i} \mapsto \famiI{ \rest{(\phi_i + \id{U_i})^{-1}}{\widetilde{U}_i} - \id{\widetilde{U}_i} }
	\]
	is defined and smooth, where
	\[
		\mathcal{D}^\tau \ndef \left\set{\phi \in \CcFproI{U_i}{X_i}{\infty} }{			\hn{\phi}{1_{\disjointUI{U_i}}}{1} < \tau
			\text{ and }
			\hn{\phi}{1_{\disjointUI{U_i}}}{0}
			< \tfrac{r}{2} (1 - \tau)
		\right}.
	\]
\end{prop}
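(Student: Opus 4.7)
The plan is first to establish well-definedness of $\InvIdcW{\widetilde{U}}$ on $\mathcal{D}^\tau$, and then to deduce smoothness by characterising it as the solution of an implicit equation built from the simultaneous composition map, whose smoothness was established in Proposition~\ref{prop:Simultane_Koor-Kompo_diffbar}.

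For well-definedness, I would argue factor-wise and then uniformly. Since $\hn{\phi}{1_{\disjointUI{U_i}}}{1} < \tau < 1$, each $\phi_i$ is $\tau$-Lipschitz on the convex set $U_i$, so $\phi_i + \id{U_i}$ is injective and a smooth diffeomorphism onto its image, with inverse being $(1-\tau)^{-1}$-Lipschitz. Combined with $\hn{\phi}{1_{\disjointUI{U_i}}}{0} < \tfrac{r(1-\tau)}{2}$ and $\widetilde{U}_i + \Ball[X_i]{0}{r} \sub U_i$, a standard Banach fixed-point argument on the closed balls $\clBall[X_i]{y}{r/2}$ (for $y \in \widetilde{U}_i$) gives $\widetilde{U}_i \sub (\phi_i + \id{U_i})(U_i)$ and shows that $\psi_i \ndef \rest{(\phi_i + \id{U_i})^{-1}}{\widetilde{U}_i} - \id{\widetilde{U}_i}$ satisfies $\norm{\psi_i(y)} \leq r/2$ uniformly in $i$. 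From the fixed-point identity $\psi_i = -\phi_i \circ (\id{\widetilde{U}_i} + \psi_i)$ one then inductively expresses each $\FAbl[\ell]{\psi_i}$ through derivatives of $\phi_i$ composed with $\id{\widetilde{U}_i} + \psi_i$ and lower-order $\FAbl[k]{\psi_i}$; using Lemma~\ref{lem:est_1-0-norm_f-0-norm} (with $1_{\disjointUI{U_i}}\in\GewFunk$ anchoring the baseline seminorm), this produces uniform weighted bounds and hence $\famiI{\psi_i} \in \CcFproI{\widetilde{U}_i}{X_i}{\infty}$.

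For smoothness, I would introduce
\[
    F : \mathcal{D}^\tau \times \mathcal{V} \to \CcFproI{\widetilde{U}_i}{X_i}{\infty},
    \qquad
    F(\phi, \psi) \ndef \psi + \famiI{\phi_i \circ (\id{\widetilde{U}_i} + \psi_i)},
\]
where $\mathcal{V}$ is a suitable open neighbourhood of the image of $\InvIdcW{\widetilde{U}}$ inside a space of the form $\CcFfoPROi{\widetilde{U}_i}{\Ball[X_i]{0}{r/2}}{\infty}{\omega}$ with a constant adjusting weight $\omega$ (allowed since $1_{\disjointUI{U_i}}\in\GewFunk$). Proposition~\ref{prop:Simultane_Koor-Kompo_diffbar}, applied with $k=\ell=\infty$, yields smoothness of $F$, and the equation $F(\phi, \psi) = 0$ characterises $\psi = \InvIdcW{\widetilde{U}}(\phi)$. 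Moreover,
\[
    \partial_\psi F(\phi,\psi) \cdot \delta\psi = \delta\psi + \famiI{\FAbl{\phi_i} \circ (\id{\widetilde{U}_i} + \psi_i) \cdot \delta\psi_i}
\]
is of the form $\id + T$ with $T$ having operator norm at most $\tau<1$ in each weighted seminorm. Hence it is invertible via a Neumann series depending smoothly on $(\phi,\psi)$, and an implicit-function argument---either inductive differentiation of $F(\phi,\InvIdcW{\widetilde{U}}(\phi))=0$, or a version of the implicit function theorem available in Keller's $\ConDiff{}{}{\infty}$-calculus---gives smoothness of $\InvIdcW{\widetilde{U}}$.

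The main obstacle will be carrying out the implicit-function step inside the restricted-product setting. Because $\CcFproI{\widetilde{U}_i}{X_i}{\infty}$ is a projective-style locally convex space and not a Banach space, no off-the-shelf implicit function theorem applies directly; smoothness is likely to be verified derivative by derivative, verifying at each stage that the resulting formula again lands in, and depends continuously on $\phi$ into, $\CcFproI{\widetilde{U}_i}{X_i}{\infty}$. This in turn requires bounding the weighted seminorms of compositions and of Neumann-series inverses uniformly in $i \in I$, which is precisely where the hypothesis $1_{\disjointUI{U_i}}\in\GewFunk$, together with the simultaneous composition and multiplier results (Proposition~\ref{prop:Simultane_Koor-Kompo_diffbar} and Lemma~\ref{lem:simultane_mult-multiplier}), play their crucial role.
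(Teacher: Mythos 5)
The paper contains no proof of this proposition: it is quoted verbatim from \cite[\S\,4.4]{Walter_Diss_p1} as a previously established result, so there is no in-paper argument to measure yours against. Judged on its own terms, your architecture --- the contraction-mapping argument on the balls $\clBall{y}{r/2}$ to get $\widetilde{U}_i \sub (\phi_i+\id{U_i})(U_i)$ and a uniform $C^0$-bound, the functional equation $\psi_i = -\phi_i\circ(\id{\widetilde{U}_i}+\psi_i)$, Neumann-series invertibility of $\idco+T$ with $\Opnorm{T}\leq\tau<1$, and derivative-by-derivative verification of smoothness in place of an off-the-shelf implicit function theorem --- is the standard route for results of this type and is surely close to the intended one.

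The genuine gap is in the weighted estimates, which are the entire point of the statement. Writing $\psi_i \ndef \rest{(\phi_i+\id{U_i})^{-1}}{\widetilde{U}_i} - \id{\widetilde{U}_i}$, the identity $\psi_i(y) = -\phi_i(y+\psi_i(y))$ gives $\abs{f_i(y)}\,\norm{\psi_i(y)} = \abs{f_i(y)}\,\norm{\phi_i(y+\psi_i(y))}$: the weight is evaluated at $y$ while $\phi_i$ is evaluated at the shifted point $y+\psi_i(y)$, so finiteness of $\hn{\psi_i}{f_i}{0}$ does not follow from finiteness of $\hn{\phi_i}{f_i}{0}$ without some uniform comparison of $f_i$ at points a bounded distance apart. \refer{lem:est_1-0-norm_f-0-norm}, which you invoke here, runs in the opposite direction --- it bounds the \emph{unweighted} seminorm by a weighted one --- and the hypothesis $1_{\disjointUI{U_i}}\in\GewFunk$ only controls the unweighted norms. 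Nor can you break the circle by feeding the identity into \refer{prop:Simultane_Koor-Kompo_diffbar}: its second argument must already lie in a set $\CcFfoPROi{\widetilde{U}_i}{V_i}{k}{\omega}$, which is by definition a subset of the $\GewFunk$-weighted space, i.e.\ exactly what you are trying to establish for $\psi$. The same issue recurs at every derivative order and again in the smoothness step, where the Neumann series for $(\idco+T)^{-1}$ is a contraction for the $\hn{\cdot}{1_{\disjointUI{U_i}}}{\ell}$ seminorms but its continuity for the $\hn{\cdot}{f}{\ell}$ seminorms needs the same comparison of $f$ along the shift. Whatever mechanism the dissertation uses to relate $f_i(y)$ and $f_i(y+\psi_i(y))$ is the actual substance of weighted inversion; it is absent from your sketch and must be supplied before the remaining (correct) scaffolding goes through.
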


\subsection{Riemannian geometry and manifolds}
\label{susec:RiemannMFK}
We introduce notation and prove some results involving Riemannian geometry.
Before we do this, we need the following variants of the inverse function theorem.

\subsubsection{Inverse function theorems}

We need a quantitative version of the famous inverse mapping theorem,
and in addition also a parameterized variant of it.
Both can be proved using parameterized versions of the Banach fixed point theorem
as provided in \cite[App.~C]{MR586942}.
In particular, these assertions can be proved with Cor.~A.2.17 or Thm.~A.2.16 in \cite{arxiv_1006.5580v3}, respectively.

\begin{satz}[Quantitative version of the inverse function theorem]
\label{satz:QuantitativeInverseFunctionTheorem_wholeSet}
	Let $X$ be a Banach space, $U \sub X$ open and convex, $k \in \cl{\N}^*$,
	$g \in \FC{U}{X}{k}$ and $x \in U$ such that $\FAbl{g}(x)$ is invertible.
	Further, let $\sup_{y \in U} \Opnorm{\FAbl{g}(y) - \FAbl{g}(x) } < \delta$
	with $\delta < \tfrac{1}{ \Opnorm{ \FAbl{g}(x)^{-1} } }$.
	Then $g$ is a homeomorphism of $U$ onto an open subset of $X$, and $g^{-1}$ is $\FC{}{}{k}$.
	Further, if $U$ contains the ball $\Ball{x'}{r}$,
	then $g(U)$ contains the ball $\Ball{g(x')}{r'}$,
	where $r' \ndef \frac{r (1 -  \delta \cdot \Opnorm{ \FAbl{g}(x)^{-1} })}{\Opnorm{\FAbl{g}(x)^{-1}}}$.
	Further, $\rest{g^{-1}}{\Ball{g(x')}{r'}}$ is
	$\frac{\Opnorm{\FAbl{g}(x)^{-1}}}{1 -  \delta \cdot \Opnorm{ \FAbl{g}(x)^{-1} }}$-Lipschitz.
\end{satz}

\begin{prop}[Parameterized quantitative version of the inverse function theorem]\label{prop:Differential_vollerRang}
	Let $X$ be a normed space, $Y$ a Banach space, $U \sub X$ open and $V \sub Y$ open and convex,
	$k \in\cl{\N}^*$, $g \in \FC{U \times V}{Y}{k}$
	and $(x_0, y_0) \in U \times V$ such that $\parFAbl{2}{g}(x_0, y_0)$ is invertible.
	Further, let
	\[
		\sup_{(x, y) \in U \times V} \Opnorm{\parFAbl{2}{g}(x, y) - \parFAbl{2}{g}(x_0, y_0) } < \delta
	\]
	with $\delta < \tfrac{1}{ \Opnorm{ \parFAbl{2}{g}(x_0, y_0)^{-1} } }$.
	Then for each $x \in U$, $g_x \ndef g(x, \cdot) : V \to Y$ is a homeomorphism onto an open subset of $Y$.
	Further,
	\[
		\Ball[Y]{y}{r} \sub  V
		\implies
		\Ball{g_x(y)}{r'} \sub g_x(\Ball[Y]{y}{r}),
	\]
	where $r' \ndef \frac{r (1 -  \delta \cdot \Opnorm{ \parFAbl{2}{g}(x_0, y_0)^{-1} })}{\Opnorm{\parFAbl{2}{g}(x_0, y_0)^{-1}}}$.
	Further, $\rest{g_x^{-1}}{\Ball{g(x, y)}{r'}}$ is
	$\frac{\Opnorm{\parFAbl{2}{g}(x_0, y_0)^{-1}}}{1 -  \delta \cdot \Opnorm{ \parFAbl{2}{g}(x_0, y_0)^{-1} }}$-Lipschitz,
	and the map
	\[
		\bigcup_{x \in U} \sset{x} \times \Ball{g(x, y)}{r'} \to \Ball[Y]{y}{r}
		: (x, z) \mapsto g_x^{-1}(z)
	\]
	is $\FC{}{}{k}$.
\end{prop}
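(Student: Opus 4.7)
The plan is to set up a parameterized Banach fixed point argument as indicated by the reference to \cite{arxiv_1006.5580v3}. Set $T \ndef \parFAbl{2}{g}(x_0, y_0)^{-1}$ and $\theta \ndef \delta \cdot \Opnorm{T}$, so that $\theta \in [0, 1)$ by hypothesis. Introduce the auxiliary map
\[
	\Phi : U \times Y \times V \to Y, \qquad \Phi(x, z, y) \ndef y + T(z - g(x, y)),
\]
which is $\FC{}{}{k}$ and satisfies the key equivalence $g_x(y) = z \iff \Phi(x, z, y) = y$.

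The first step is to show that $\Phi(x, z, \cdot)$ is $\theta$-Lipschitz on $V$ uniformly in $(x, z)$. Convexity of $V$ together with the mean value theorem reduces this to the pointwise bound
\[
	\Opnorm{\parFAbl{3}{\Phi}(x, z, y)} = \Opnorm{T\bigl(\parFAbl{2}{g}(x_0, y_0) - \parFAbl{2}{g}(x, y)\bigr)} \leq \Opnorm{T} \cdot \delta = \theta,
\]
which holds throughout $U \times Y \times V$. From this uniform contraction, injectivity of each $g_x$ and the Lipschitz constant $\Opnorm{T}/(1 - \theta)$ for its inverse follow by a routine calculation: if $\eta_i \ndef g_x^{-1}(z_i)$, then $\eta_i = \Phi(x, z_i, \eta_i)$ implies $\norm{\eta_1 - \eta_2} \leq \Opnorm{T} \norm{z_1 - z_2} + \theta \norm{\eta_1 - \eta_2}$.

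Next, for the ball inclusion I would assume $\Ball[Y]{y}{r} \sub V$, fix $z \in \Ball{g_x(y)}{r'}$ with $r' = r(1 - \theta)/\Opnorm{T}$, and construct a Picard iteration $\xi_0 \ndef y$, $\xi_{n+1} \ndef \Phi(x, z, \xi_n)$. The initial step is bounded by $\norm{\xi_1 - \xi_0} = \norm{T(z - g_x(y))} \leq \Opnorm{T} r' = r(1 - \theta)$, and inductively $\norm{\xi_{n+1} - \xi_n} \leq \theta^n \cdot r(1 - \theta)$, so $\norm{\xi_n - y} \leq (1 - \theta^n) r < r$; the sequence therefore stays in $\Ball[Y]{y}{r} \sub V$ and converges to a fixed point there, which is the desired preimage $g_x^{-1}(z)$. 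This also shows that $g_x(V)$ is open and $g_x^{-1}$ is continuous, completing the topological part of the assertion.

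The most delicate point is the $\FC{}{}{k}$-regularity of $(x, z) \mapsto g_x^{-1}(z)$ on the set $D \ndef \bigcup_{x \in U} \sset{x} \times \Ball{g(x, y)}{r'}$, which is open since $x \mapsto g(x, y)$ is continuous. Here I would invoke the parameterized Banach fixed point theorem (Cor.~A.2.17 of \cite{arxiv_1006.5580v3}): since $\Phi$ is $\FC{}{}{k}$ jointly in all three arguments and $\Phi(x, z, \cdot)$ is a uniform $\theta$-contraction mapping a suitable closed ball in $Y$ around each reference fixed point into itself for nearby parameters, the unique fixed point depends $\FC{}{}{k}$-smoothly on $(x, z)$. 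The main obstacle is to match the present data to the precise hypotheses of that abstract theorem---concretely, to produce, near every $(x_*, z_*) \in D$, a common closed ball in $Y$ invariant under the perturbed maps $\Phi(x, z, \cdot)$ for $(x, z)$ in a neighbourhood of $(x_*, z_*)$---but the uniform contraction rate and the ball-inclusion estimate established above supply exactly this data.
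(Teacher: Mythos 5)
Your proposal is correct and follows exactly the route the paper indicates: the paper gives no written proof but refers to the parameterized Banach fixed point theorem (Thm.~A.2.16 / Cor.~A.2.17 of the cited preprint), and your contraction map $\Phi(x,z,y)=y+T(z-g(x,y))$ with the uniform Lipschitz bound $\theta=\delta\Opnorm{T}$, the Picard iteration for the ball inclusion, and the appeal to the parameterized fixed point theorem for the $\FC{}{}{k}$-dependence is precisely that argument spelled out. All the quantitative constants ($r'$ and the Lipschitz constant of $g_x^{-1}$) come out matching the statement, so nothing is missing.
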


\subsubsection{Definitions and elementary results}
We need the following, well-known facts about Riemannian geometry:
\begin{defi}[Riemannian exponential function]
	Let $d \in \N^*$ and $(M, g)$ be a $d$-dimensional Riemannian manifold.
	Then the (maximal) domain $\domExpMax{g}$ of the \emph{Riemannian exponential map} $\Rexp{g} : \domExpMax{g} \to M$
	is an open subset of $\Tang{M}$.
	$\domExpMax{g}$ is an open neighborhood of the zero section in $\Tang{M}$,
	and for each $x \in M$, we have $[0,1] . (\domExpMax{g} \cap \Tang[x]{M}) \sub \domExpMax{g} \cap \Tang[x]{M}$.

	For each $x \in M$, we define $\RexpPar{x}$ as $\rest{\Rexp{g}}{\Tang[x]{M} \cap \domExpMax{g}}$.
	If $M$ is an open subset of $\R^d$, then for each $x \in M$ and $v, w \in \R^d$,
	we have the identity
	\begin{equation}\label{id:Ableitung_Rexp_Nullschitt}
		\dA{\Rexp{g}}{x, 0}{v, w} = v + w .
	\end{equation}
\end{defi}
In order to define the logarithm, we need the following definition.
\newcommand{\BallTang}[3]{\ensuremath{B^{#1}_{#3}(#2)}}
\begin{defi}
	Let $d \in \N^*$ and $(M, g)$ a Riemannian manifold. For $x$ and $h \in \Tang[x]{M}$,
	we define
	\[
		\norm{h}_{g_x} \ndef \sqrt{g(h, h)}
		.
	\]
	\newcommand{\Tangi}[2]{\Tang[#1]{#2}}%
	Obviously, each $\norm{\cdot}_{g_x}$ is a norm on $\Tang[x]{M}$. We also define
	\[
		\BallTang{g_x}{0}{r} \ndef \Ball[(\Tangi{x}{M}, \norm{\cdot}_{g_x})]{0}{r} .
	\]
	If $M$ is an open subset of $\R^d$, we set for $h \in \R^d$
	\[
		\norm{h}_{g_x} \ndef \sqrt{g((x,h), (x,h))}
		.
	\]
	Obviously, each $\norm{\cdot}_{g_x}$ is a norm on $\R^d$.
	In particular, we define
	\[
		\BallTang{g_x}{0}{r} \ndef \Ball[(\R^d, \norm{\cdot}_{g_x})]{0}{r} .
	\]
\end{defi}
\begin{defi}[Riemannian logarithm map]
	Let $d \in \N^*$ and $(M, g)$ be a $d$-dimensional Riemannian manifold.
	For all $x \in M$ there exists an open neighborhood $V_x \sub \Tang[x]{M}$ of $0_x$
	such that $\rest{\RexpPar{x}}{V_x}$ is a diffeomorphism onto its image, which is an open subset of $M$.
	So the \emph{Riemannian logarithm map} $\Rlog{g} : \domLogMax{g} \to \Tang{M}$
	can be defined, where
	\[
		\domLogMax{g}
		\ndef
		\bigcup_{x \in M} \sset{x} \times \RexpPar{x}(\BallTang{g_x}{0}{r_x})
		\sub M^2
	\]
	and $\Rlog{g}(x, y) \ndef \rest{\RexpPar{x}}{ \BallTang{g_x}{0}{r_x} }^{-1}$;
	here $r_x \ndef \sup \set{r > 0}{ \rest{\RexpPar{x}}{\BallTang{g_x}{0}{r}} \text{ is injective} }$.
	Further, for $x \in M$ we set $\RlogPar{x}(y) \ndef \Rlog{g}(x, y)$
	for $y \in M$ such that $(x, y) \in \domLogMax{g}$.

	Let $M$ be an open subset of $\R^d$. We define $\RlogVR{g} \ndef \pi_2 \circ \Rlog{g}$,
	where $\pi_2 : \R^{2 d} \to \R^d$ denotes the projection on the second factor.
	For each $x \in M$ and $v, w \in \R^d$,
	the identity $\dA{\RlogVR{g}}{x, x}{v, w} = w - v$ holds.
	This is an immediate consequence of \ref{id:Ableitung_Rexp_Nullschitt} and the chain rule
	since $\Rlog{g}$ and $(\pi_1, \Rexp{g})$ are inverse functions.
\end{defi}

We introduce notation for the set of vector fields.
For a manifold $M$ and $k \in \cl{\N}$,
We let $\VecFields[k]{M}$ denote the set of $\ConDiff{}{}{k}$ vector fields.
In particular, we set $\VecFields{M} \ndef \VecFields[\infty]{M}$.

\begin{defi}[Localizations]
	Let $M$ be a $d$-dimensional manifold, $\kappa : \UF \to \VF$ a chart of $M$,
	$k \in \cl{\N}$ and $X \in \VecFields[k]{\UF}$.
	Then we set $\VectFLok{X}{\kappa} \ndef \dA{\kappa}{}{}\circ X \circ\kappa^{-1}: \VF\to\R^d$.
	If $g$ is a metric on $M$, we set $\RMetLok{g}{\kappa}\ndef g \circ(\Tang{\kappa^{-1}}\oplus\Tang{\kappa^{-1}})$.
\end{defi}

\begin{bem}\label{bem:Riemann-Geometrie_Karte}
	Let $(M, g)$ be a Riemannian manifold and $\kappa : \UF \to \VF$ a chart of $M$.
	Then $\Tang{\kappa^{-1}}(\domExpMax{\RMetLok{g}{\kappa}}) \sub \domExpMax{g}$,
	and
	$\Rexp{\RMetLok{g}{\kappa}}
	= \rest{\kappa \circ \Rexp{g} \circ\, \Tang{\kappa^{-1}} }{\domExpMax{ \RMetLok{g}{\kappa}}}$.
	Further, $(\kappa^{-1} \times \kappa^{-1})(\domLogMax{\RMetLok{g}{\kappa}}) \sub \domLogMax{g}$,
	and
	$\Rlog{\RMetLok{g}{\kappa}}
	= \rest{\Tang{\kappa} \circ \Rlog{g} \circ (\kappa^{-1} \times \kappa^{-1}) }{\domLogMax{ \RMetLok{g}{\kappa}}}$.
\end{bem}

\begin{lem}\label{lem:Log-exp_unter_Kartenwechsel}\label{lem:SP_mitRexp-Vergleich_lokal_global}
	Let $(M, g)$ be a Riemannian manifold and $\kappa : \widetilde{\UF_\kappa} \to \UF_\kappa$,
	$\phi : \widetilde{\UF_\phi} \to \UF_\phi$ charts for $M$ such that $\widetilde{\UF_\kappa} \cap \widetilde{\UF_\phi} \neq \emptyset$.
	Then the following identities hold:
	\begin{assertions}
		\item
		On $\domExpMax{\RMetLok{g}{\rest{\kappa}{\kappa(\widetilde{\UF_\kappa} \cap \widetilde{\UF_\phi})} }}$, we have
		$\phi \circ \kappa^{-1} \circ \Rexp{\RMetLok{g}{\kappa}}
		= \Rexp{\RMetLok{g}{\phi}} \circ \Tang{(\phi \circ \kappa^{-1})}$.

		\item
		On $\domLogMax{ \RMetLok{g}{ \rest{\kappa}{\kappa(\widetilde{\UF_\kappa} \cap \widetilde{\UF_\phi})} }}$, we have
		$\Tang{(\phi \circ \kappa^{-1})} \circ \Rlog{\RMetLok{g}{\kappa}}
		= \Rlog{\RMetLok{g}{\phi}} \circ (\phi \circ \kappa^{-1} \times \phi \circ \kappa^{-1})$.
	\end{assertions}
	Let $X \in \VecFields[0]{M}$.
	\begin{assertions}[resume]
		\item
		$\Tang{(\phi \circ \kappa^{-1})} \circ (\id{\kappa(\widetilde{\UF_\kappa} \cap \widetilde{\UF_\phi})}, \VectFLok{X}{\kappa})
		= (\id{\phi(\widetilde{\UF_\kappa} \cap \widetilde{\UF_\phi})}, \VectFLok{X}{\phi}) \circ \phi \circ \kappa^{-1}$.
	\end{assertions}
	Additionally, let $\VF \sub \UF_\kappa$ such that $\image{(\Tang{\kappa} \circ \rest{X}{\VF}) } \sub \domExpMax{\RMetLok{g}{\kappa}}$.
	\begin{assertions}[resume]
		\item
		Then $\kappa \circ \Rexp{g} \circ X \circ \kappa^{-1} = \Rexp{\RMetLok{g}{\kappa}} \circ (\id{\kappa(\VF)}, \VectFLok{X}{\kappa})$ on $\kappa(\VF)$.
	\end{assertions}
\end{lem}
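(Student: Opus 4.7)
The plan is to deduce all four identities from Remark~\ref{bem:Riemann-Geometrie_Karte} together with the chain rule in the form $\Tang{(\phi \circ \kappa^{-1})} = \Tang{\phi} \circ \Tang{\kappa^{-1}}$; the assertions are purely local and the only subtlety lies in tracking domain restrictions. For (a), applying the formula $\Rexp{\RMetLok{g}{\kappa}} = \kappa \circ \Rexp{g} \circ \Tang{\kappa^{-1}}$ from Remark~\ref{bem:Riemann-Geometrie_Karte} to the left-hand side gives $\phi \circ \Rexp{g} \circ \Tang{\kappa^{-1}}$ after the cancellation $\kappa^{-1} \circ \kappa$. Doing the same with $\Rexp{\RMetLok{g}{\phi}} = \phi \circ \Rexp{g} \circ \Tang{\phi^{-1}}$ on the right and using the chain rule yields
\[
\phi \circ \Rexp{g} \circ \Tang{\phi^{-1}} \circ \Tang{\phi} \circ \Tang{\kappa^{-1}} = \phi \circ \Rexp{g} \circ \Tang{\kappa^{-1}},
\]
so the two sides coincide.

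Part (b) follows by the same manipulation using the log-formula $\Rlog{\RMetLok{g}{\kappa}} = \Tang{\kappa} \circ \Rlog{g} \circ (\kappa^{-1} \times \kappa^{-1})$; alternatively, since $\Rlog{\RMetLok{g}{\kappa}}$ is by construction the local inverse of $\Rexp{\RMetLok{g}{\kappa}}$ on the balls $\BallTang{\RMetLok{g}{\kappa,x}}{0}{r_x}$, (b) can be obtained by locally inverting (a). For (c), this is just the chain rule: at $y \in \kappa(\widetilde{\UF_\kappa} \cap \widetilde{\UF_\phi})$, setting $p \ndef \kappa^{-1}(y)$, the second component of the left-hand side is
\[
\dA{(\phi \circ \kappa^{-1})}{y}{\VectFLok{X}{\kappa}(y)} = \dA{\phi}{p}{\dA{\kappa^{-1}}{y}{\dA{\kappa}{p}{X(p)}}} = \dA{\phi}{p}{X(p)} = \VectFLok{X}{\phi}(\phi(p)),
\]
while the first components agree trivially. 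For (d), applying Remark~\ref{bem:Riemann-Geometrie_Karte} reduces the claim to verifying that $\Tang{\kappa^{-1}} \circ (\id{\kappa(\VF)}, \VectFLok{X}{\kappa}) = X \circ \kappa^{-1}$ pointwise on $\kappa(\VF)$, which is immediate from the definition of $\VectFLok{X}{\kappa}$ together with the fact that $\dA{\kappa^{-1}}{y}{}$ and $\dA{\kappa}{\kappa^{-1}(y)}{}$ are mutually inverse linear isomorphisms; the hypothesis $\image{(\Tang{\kappa} \circ \rest{X}{\VF})} \sub \domExpMax{\RMetLok{g}{\kappa}}$ ensures the composition is defined.

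The main care needed throughout is bookkeeping of domains — verifying that the inclusions $\Tang{\kappa^{-1}}(\domExpMax{\RMetLok{g}{\kappa}}) \sub \domExpMax{g}$ and the analogous ones for $\phi$, together with the corresponding inclusions for $\domLogMax{}$, are simultaneously consistent on the overlap $\widetilde{\UF_\kappa} \cap \widetilde{\UF_\phi}$. No deeper idea is required; the lemma is essentially a functorial unpacking of the definition of $\RMetLok{g}{\kappa}$ under the chart change $\phi \circ \kappa^{-1}$, and in each case the identity holds after an explicit substitution of the chart expressions from Remark~\ref{bem:Riemann-Geometrie_Karte} and a single application of the chain rule.
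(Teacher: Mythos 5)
Your proof is correct and follows the same route the paper intends: the paper's proof consists of the single sentence that these are easy computations involving Remark~\ref{bem:Riemann-Geometrie_Karte}, and your argument simply carries out those substitutions and chain-rule cancellations explicitly, with the appropriate attention to domains.
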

\begin{proof}
	These are easy computations involving \refer{bem:Riemann-Geometrie_Karte}.
\end{proof}

\subsubsection{Riemannian exponential function and logarithm on open subsets of \texorpdfstring{$\boldsymbol{\R^d}$}{R^d}}
\label{susec:RiemannExpLog_VR}

We examine functions that arise as the composition of
the second component $\RlogVR{g}$ of the Riemannian logarithm
or the exponential map $\Rexp{g}$ with functions of the form $(\id{}, X)$. 
Of particular interest are estimates for the function values and the values of the first derivatives
of such functions.

We also derive a sufficient criterion on a vector field $X$
that ensures that $\Rexp{g} \circ X$ is injective,
and gives a lower bound for the size of its image.
Further, we use that $\RlogVR{g}$ is the fiberwise inverse function to $\Rexp{g}$,
and apply the parameterized inverse function theorem \refer{prop:Differential_vollerRang}.
We will get estimates for the domain and the first partial derivative of $\RlogVR{g}$
in terms of those numbers for $\Rexp{g}$.

For open nonempty $U \sub \R^d$, we will tacitly identify $\Tang{U}$ with $U \times \R^d$
and, for $x \in U$, $\Tang[x]{U}$ with $\R^d$.

\paragraph{Superposition with the Riemannian exponential map}

We start with the exponential map.
\subparagraph{Estimates for function values and the first derivatives}
We derive estimates for the function values and first derivatives of $\Rexp{g} \circ(\id{}, X)$.
This is mostly done using the mean value theorem and the triangle inequality.
\begin{defi}
	Let $d \in \N^*$, $U \sub \R^d$ an open nonempty subset, $g$ a Riemannian metric on $U$
	and $K \sub U$ a relatively compact set.
	Using standard compactness arguments, we see that there exists $\tau > 0$ such that
	$\cl{K} \times \Ball{0}{ \tau } \sub \domExpMax{g}$ (note that this implies $\Rexp{g}( \cl{K} \times \Ball{0}{ \tau } ) \sub U$).
	We denote the supremum of such $\tau$ by $\grenzExp[g]{K}{U}$.
	If the metric discussed is obvious, we may omit it and write $\grenzExp{K}{U}$.
	Now let $0 < \delta < \grenzExp{K}{U}$.
	We define
	\[
		\BndFstAblRex[g]{K}{\delta}
		\ndef
		\sup_{x \in \cl{K}} \hn{\RexpPar{x}}{1_{ \clBall{0}{\delta}} }{1}
		= \hn{\parFAbl{2}{\Rexp{g}}}{ 1_{ \cl{K} \times \clBall{0}{\delta}} }{0}
	\]
	and
	\[
		\BndSndAblRex[g]{K}{\delta}
		\ndef
		\hn{\Rexp{g}}{1_{\cl{K} \times \clBall{0}{\delta}}}{2}.
	\]
	As above, if the metric discussed is clear, we may omit it and just write
	$\BndFstAblRex{K}{\delta}$ or $\BndSndAblRex{K}{\delta}$, respectively.
	Note that $\hn{\Rexp{g}}{1_{K \times \clBall{0}{\delta}}}{2}$
	relates to the norm $\norm{(v, w)} = \max(\norm{v}, \norm{w})$ on $\R^{2 d}$.
\end{defi}

\begin{lem}\label{lem:Naehe_expVF-id_C0}
	Let $d \in \N^*$, $U \sub \R^d$ an open nonempty subset, $g$ a Riemannian metric on $U$,
	$K \sub U$ a relatively compact set and $0 < \delta < \grenzExp{K}{U}$.
	\begin{assertions}
		\item\label{ass:Naehe_expx-id_C0}
		Then for all $x \in K$ and $y \in \clBall{0}{\delta}$ the following estimate holds:
		\[
			\norm{\Rexp{g}(x, y) - x}
			\leq \BndFstAblRex{K}{\delta} \norm{y}.
		\]

		\item\label{ass:Naehe_expVF-id_C0}
		Let $X : K \to \R^d$ with $\hn{X}{1_K}{0} \leq \delta$.
		Then for all $x \in K$, the following estimate holds:
		\[
			\norm{(\Rexp{g} \circ (\id{K}, X))(x) - \id{K}(x)}
			\leq \BndFstAblRex{K}{\delta}\norm{X(x)}
		\]
	\end{assertions}
\end{lem}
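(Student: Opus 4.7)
The statement is a straightforward consequence of the mean value theorem applied fiberwise to $\Rexp{g}$, using the key fact that $\RexpPar{x}(0) = \Rexp{g}(x,0) = x$ for every $x$ (the zero tangent vector exponentiates to its base point).

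For part \ref{ass:Naehe_expx-id_C0}, the plan is as follows. Since $\delta < \grenzExp{K}{U}$, by the definition of $\grenzExp{K}{U}$ we have $\cl{K} \times \clBall{0}{\delta} \subseteq \domExpMax{g}$, so $\RexpPar{x}$ is defined on all of $\clBall{0}{\delta}$ for each $x \in \cl{K}$. The set $\clBall{0}{\delta}$ is convex, so for $y \in \clBall{0}{\delta}$ the segment $[0,y]$ lies in $\clBall{0}{\delta}$. Applying the mean value theorem to the $\ConDiff{}{}{1}$ map $\RexpPar{x} \colon \clBall{0}{\delta} \to \R^d$ on this segment yields
\[
	\norm{\RexpPar{x}(y) - \RexpPar{x}(0)}
	\leq \sup_{t \in [0,1]} \Opnorm{\FAbl{\RexpPar{x}}(t y)} \cdot \norm{y}
	\leq \hn{\RexpPar{x}}{1_{\clBall{0}{\delta}}}{1} \cdot \norm{y}
	\leq \BndFstAblRex{K}{\delta} \norm{y},
\]
where the last inequality uses the very definition of $\BndFstAblRex{K}{\delta}$ as the supremum of $\hn{\RexpPar{x}}{1_{\clBall{0}{\delta}}}{1}$ over $x \in \cl{K}$. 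Since $\RexpPar{x}(0) = x$, this is the claimed estimate.

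For part \ref{ass:Naehe_expVF-id_C0}, the plan is simply to reduce to \ref{ass:Naehe_expx-id_C0}. For each $x \in K$, the assumption $\hn{X}{1_K}{0} \leq \delta$ gives $\norm{X(x)} \leq \delta$, so $X(x) \in \clBall{0}{\delta}$. Hence \ref{ass:Naehe_expx-id_C0} applied to $y \ndef X(x)$ yields
\[
	\norm{(\Rexp{g} \circ (\id{K}, X))(x) - x}
	= \norm{\Rexp{g}(x, X(x)) - x}
	\leq \BndFstAblRex{K}{\delta} \norm{X(x)},
\]
which is the desired estimate.

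There is no real obstacle here; the only thing to check carefully is that all intermediate points $t y$ (resp.\ $X(x)$) lie in $\clBall{0}{\delta}$, so that the uniform bound encoded by $\BndFstAblRex{K}{\delta}$ actually applies along the segment used in the mean value theorem. This is immediate from convexity of $\clBall{0}{\delta}$ and the bound $\hn{X}{1_K}{0} \leq \delta$.
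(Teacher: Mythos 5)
Your proposal is correct and follows essentially the same route as the paper: the paper also writes $\Rexp{g}(x,y)-x = \Rexp{g}(x,y)-\Rexp{g}(x,0)$, applies the mean value theorem along the segment $[0,y]$, and bounds the derivative by $\BndFstAblRex{K}{\delta}$, then obtains \ref{ass:Naehe_expVF-id_C0} as an immediate consequence of \ref{ass:Naehe_expx-id_C0}. The only cosmetic difference is that the paper uses the integral form of the mean value theorem while you use the supremum estimate directly.
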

\begin{proof}
	\ref{ass:Naehe_expx-id_C0}
	We calculate using the mean value theorem
	\[
		\Rexp{g}(x, y) - x
		= \Rexp{g}(x, y) - \Rexp{g}(x, 0)
		= \Mint{ \FAbl{\RexpPar{x}}(t y) \eval y }{t}.
	\]
	From this and the definition of $\BndFstAblRex{K}{\delta}$, we easily derive the assertion.

	\ref{ass:Naehe_expVF-id_C0}
	This is an easy consequence of \ref{ass:Naehe_expx-id_C0}.
\end{proof}

\begin{lem}\label{lem:Naehe_expFunc-id_C1}
	Let $d \in \N^*$, $U \sub \R^d$ an open nonempty subset and
	$g$ a Riemannian metric on $U$.
	Further, let $W \sub U$ be an open, nonempty, relatively compact subset and $\delta \in ]0, \grenzExp{W}{U}[$.
	Then for each $X \in \ConDiff{W}{\R^d}{1}$ with $\hn{X}{1_W}{0} \leq \delta$, we have
	\begin{multline*}
		\norm{\FAbl{((\Rexp{g} \circ (\id{W}, X)) - \id{W})}(x)\eval v}
		\\
		\leq \hn{\Rexp{g}}{1_{W \times \clBall{0}{\delta} }}{2} \norm{(0, X(x))}\, \norm{(v, \FAbl{X}(x)\eval v)} + \norm{\FAbl{X}(x)\eval v}
	\end{multline*}
	for $x \in W$ and $v \in \R^d$. In particular, if we endow $\R^{2 d}$ with
	the norm $\norm{(v, w)} = \max(\norm{v}, \norm{w})$ and assume that $\hn{X}{1_W}{1} \leq 1$,
	we get the estimate
	\[
		\Opnorm{\FAbl{((\Rexp{g} \circ (\id{W}, X)) - \id{W})}(x)}
		\leq \BndSndAblRex{W}{\delta}\norm{X(x)} + \Opnorm{\FAbl{X}(x)}.
	\]
\end{lem}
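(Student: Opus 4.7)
The plan is to differentiate $\Rexp{g} \circ (\id{W}, X) - \id{W}$ at $x$ in direction $v$ using the chain rule, then rewrite the result so that the special identity $\dA{\Rexp{g}}{(x, 0)}{(v, w)} = v + w$ from \ref{id:Ableitung_Rexp_Nullschitt} can be used to cancel the linear terms. Concretely, by the chain rule we have
\[
	\FAbl{(\Rexp{g} \circ (\id{W}, X))}(x) \eval v = \dA{\Rexp{g}}{(x, X(x))}{(v, \FAbl{X}(x)\eval v)},
\]
and subtracting $v = \FAbl{\id{W}}(x)\eval v$ gives
\[
	\FAbl{((\Rexp{g} \circ (\id{W}, X)) - \id{W})}(x)\eval v
	= \dA{\Rexp{g}}{(x, X(x))}{(v, \FAbl{X}(x)\eval v)} - v.
\]
Using \ref{id:Ableitung_Rexp_Nullschitt}, rewrite $v = \dA{\Rexp{g}}{(x, 0)}{(v, \FAbl{X}(x)\eval v)} - \FAbl{X}(x)\eval v$, so the expression becomes
\[
	\bigl(\dA{\Rexp{g}}{(x, X(x))}{(v, \FAbl{X}(x)\eval v)} - \dA{\Rexp{g}}{(x, 0)}{(v, \FAbl{X}(x)\eval v)}\bigr) + \FAbl{X}(x)\eval v.
\]

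Next, I would estimate the bracketed difference by the mean value theorem applied to the smooth path $t \mapsto \dA{\Rexp{g}}{(x, t X(x))}{(v, \FAbl{X}(x)\eval v)}$ for $t \in [0,1]$. Since $\hn{X}{1_W}{0} \leq \delta$, the whole segment $\{x\} \times [0,1]\cdot X(x)$ lies in $W \times \clBall{0}{\delta}$, so the derivative of this path, namely $\FAbl[2]{\Rexp{g}}(x, t X(x))\eval((0, X(x)), (v, \FAbl{X}(x)\eval v))$, is bounded in norm by $\hn{\Rexp{g}}{1_{W \times \clBall{0}{\delta}}}{2} \cdot \norm{(0, X(x))} \cdot \norm{(v, \FAbl{X}(x)\eval v)}$. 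Integrating yields
\[
	\norm{\dA{\Rexp{g}}{(x, X(x))}{(v, \FAbl{X}(x)\eval v)} - \dA{\Rexp{g}}{(x, 0)}{(v, \FAbl{X}(x)\eval v)}}
	\leq \hn{\Rexp{g}}{1_{W \times \clBall{0}{\delta}}}{2} \norm{(0, X(x))}\,\norm{(v, \FAbl{X}(x)\eval v)},
\]
and combining with the $\FAbl{X}(x)\eval v$ term via the triangle inequality gives the first claimed estimate.

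For the consequence, assuming $\hn{X}{1_W}{1} \leq 1$ and equipping $\R^{2d}$ with the max norm, one has $\norm{(v, \FAbl{X}(x)\eval v)} = \max(\norm{v}, \norm{\FAbl{X}(x)\eval v}) \leq \norm{v}$ since $\Opnorm{\FAbl{X}(x)} \leq 1$, and $\norm{(0, X(x))} = \norm{X(x)}$. Dividing by $\norm{v}$ and taking the supremum yields the operator norm bound with $\BndSndAblRex{W}{\delta} = \hn{\Rexp{g}}{1_{W \times \clBall{0}{\delta}}}{2}$ by definition.

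The only real obstacle is a bookkeeping one: making sure the segment used in the mean value step stays inside the domain where $\Rexp{g}$ is defined and where the $C^2$-bound applies. This is exactly what $\hn{X}{1_W}{0} \leq \delta < \grenzExp{W}{U}$ guarantees, so once that is noted the argument reduces to the chain rule, the identity \ref{id:Ableitung_Rexp_Nullschitt}, and the standard mean value estimate.
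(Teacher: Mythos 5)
Your argument is correct and is essentially the paper's own proof: both rewrite $\FAbl{(\Rexp{g}\circ(\id{W},X)-\id{W})}(x)\eval v$ as $\bigl(\FAbl{\Rexp{g}}(x,X(x))-\FAbl{\Rexp{g}}(x,0)\bigr)\eval(v,\FAbl{X}(x)\eval v)+\FAbl{X}(x)\eval v$ via the identity $\dA{\Rexp{g}}{x,0}{v,w}=v+w$, and then bound the difference by the mean value theorem along $t\mapsto(x,tX(x))$, which stays in $W\times\clBall{0}{\delta}$ because $\hn{X}{1_W}{0}\leq\delta$. The only cosmetic difference is that the paper inserts the zero term $\FAbl{\Rexp{g}}(x,0)\eval(\FAbl{X}(x)\eval v,-\FAbl{X}(x)\eval v)$ where you instead rewrite $v$; the resulting decomposition is identical.
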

\begin{proof}
	Let $x \in W$ and $v \in \R^d$. Then we calculate using that $ v = \FAbl{\Rexp{g}}(x, 0)\eval (v, 0) = \FAbl{\Rexp{g}}(x, 0)\eval (0, v)$
	and $0 = \FAbl{\Rexp{g}}(x, 0)\eval (\FAbl{X}(x)\eval v, -\FAbl{X}(x)\eval v)$
	\begin{align*}
		&\FAbl{(\Rexp{g} \circ (\id{W}, X) - \id{W}) }(x)\eval v\\
		=& \FAbl{\Rexp{g}}(x, X(x))\eval (v, \FAbl{X}(x)\eval v) - \FAbl{\Rexp{g}}(x, 0)\eval (v, 0)\\
		&\qquad +  \FAbl{\Rexp{g}}(x, 0)\eval (\FAbl{X}(x)\eval v, -\FAbl{X}(x)\eval v)\\
		=& \FAbl{\Rexp{g}}(x, X(x))\eval (v, \FAbl{X}(x)\eval v) - \FAbl{\Rexp{g}}(x, 0)\eval (v, \FAbl{X}(x)\eval v)
		 +  \FAbl{X}(x)\eval v.
	\end{align*}
	For the difference we derive using the mean value theorem
	\begin{multline*}
		( \FAbl{\Rexp{g}}(x, X(x)) - \FAbl{\Rexp{g}}(x, 0) )\eval (v, \FAbl{X}(x)\eval v)\\
		= \Mint{\FAbl{(\FAbl{\Rexp{g}) }(x, t X(x))} \eval  (0, X(x)) }{t} \eval (v, \FAbl{X}(x)\eval v).
	\end{multline*}
	From this, the assertion follows.
\end{proof}

\subparagraph{On invertibility and the size of the image}

Having established the estimates, we can give a criterion on when $\Rexp{g} \circ\, (\id{}, X)$ is injective,
and how large its image is.
The main tool used is a quantitative, parameterized version of the inverse function theorem
that is provided in \refer{satz:QuantitativeInverseFunctionTheorem_wholeSet}.
\begin{lem}\label{lem:Exp-VF_Diffeo--lokal}
	Let $d \in \N^*$, $U \sub \R^d$ open, $g$ a Riemannian metric on $U$,
	$r > 0$ such that $\clBall{0}{r} \sub U$ and $k \in \cl{\N}$ with $k \geq 1$.
	Further, let $\eps \in ]0, \frac{1}{2}[$, $\nu \in ]0, \grenzExp{\Ball{0}{r}}{U}[$ and $\delta > 0$
	with
	$
		\delta < \min\bigl(\frac{\eps r}{2 \BndFstAblRex{ \Ball{0}{r} }{\nu}}, \nu,
	\frac{\eps}{4 (\BndSndAblRex{\Ball{0}{r}}{\nu}  + 1) }\bigr)
	$.
	Then for $X\in \ConDiff{\Ball{0}{r} }{\R^d}{k}$ such that $\hn{X}{ 1_{\Ball{0}{r}} }{0} < \delta$
	and $\hn{X}{ 1_{\Ball{0}{r}} }{1} < \frac{\eps}{4}$,
	the map $\Rexp{g} \circ (\id{\Ball{0}{r}},  X)$ is a $\ConDiff{}{}{k}$-diffeomorphism onto its image,
	which is an open subset of $\R^d$ and contains $\Ball{0}{r(1 - 2 \eps)}$.
\end{lem}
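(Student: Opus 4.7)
The plan is to apply the quantitative inverse function theorem \refer{satz:QuantitativeInverseFunctionTheorem_wholeSet} to the map $f \ndef \Rexp{g} \circ (\id{W}, X)$ on the convex open set $W \ndef \Ball{0}{r}$, with base point $x = 0$. To set up its hypotheses, I first derive a uniform estimate on $\FAbl{f} - \Id$. Since $\hn{X}{1_W}{1} < \eps/4 < 1$ and $\hn{X}{1_W}{0} < \delta < \nu < \grenzExp{W}{U}$, \refer{lem:Naehe_expFunc-id_C1} applies (with the lemma's $\delta$ taken to be $\nu$) and yields
\[
	\Opnorm{\FAbl{f}(x) - \Id}
	\leq \BndSndAblRex{W}{\nu} \norm{X(x)} + \Opnorm{\FAbl{X}(x)}
	< \tfrac{\eps}{4} + \tfrac{\eps}{4}
	= \tfrac{\eps}{2}
\]
for every $x \in W$, where the third upper bound on $\delta$ controls the first summand. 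Consequently $\FAbl{f}(0)$ is invertible with $\Opnorm{\FAbl{f}(0)^{-1}} \leq 2/(2-\eps)$ via a Neumann series, and the triangle inequality gives $\sup_{y \in W} \Opnorm{\FAbl{f}(y) - \FAbl{f}(0)} < \eps$.

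Next, I would invoke \refer{satz:QuantitativeInverseFunctionTheorem_wholeSet} with these data and $\delta_{\mathrm{IFT}} \ndef \eps$. The required condition $\delta_{\mathrm{IFT}} \cdot \Opnorm{\FAbl{f}(0)^{-1}} < 1$ reduces to $\eps < 2/3$, which holds because $\eps < 1/2$. The theorem therefore produces the desired $\ConDiff{}{}{k}$-diffeomorphism of $W$ onto an open subset of $\R^d$. Choosing moreover $x' \ndef 0$ (so that $\Ball{x'}{r} = W$ is contained in $W$) in the quantitative image part, the image of $f$ contains $\Ball{f(0)}{r'}$ with
\[
	r' \geq r\left(\tfrac{2-\eps}{2} - \eps\right) = r\left(1 - \tfrac{3\eps}{2}\right).
\]

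It then only remains to shift the center of this ball from $f(0)$ to $0$. By \refer{lem:Naehe_expVF-id_C0} (again with the lemma's $\delta$ taken to be $\nu$) and the first upper bound on $\delta$,
\[
	\norm{f(0)}
	\leq \BndFstAblRex{W}{\nu}\, \norm{X(0)}
	\leq \BndFstAblRex{W}{\nu} \cdot \delta
	< \tfrac{\eps r}{2},
\]
whence $\Ball{f(0)}{r(1-3\eps/2)} \supset \Ball{0}{r(1-2\eps)}$, finishing the proof. The only real difficulty is bookkeeping: the three summands in the upper bound on $\delta$ correspond, respectively, to controlling the displacement $\norm{f(0)}$, to ensuring $\delta < \nu < \grenzExp{W}{U}$ so that $(\id{W}, X)$ maps into the domain of $\Rexp{g}$, and to making the size of $\FAbl{f} - \Id$ small enough for the inverse function theorem to apply.
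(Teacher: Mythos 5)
Your proposal is correct and follows essentially the same route as the paper's proof: use \refer{lem:Naehe_expFunc-id_C1} (with the third bound on $\delta$) to get $\sup_{x\in W}\Opnorm{\FAbl{f}(x)-\Id}<\eps/2$, deduce invertibility of $\FAbl{f}(0)$ with $\Opnorm{\FAbl{f}(0)^{-1}}<(1-\eps/2)^{-1}$ and the uniform oscillation bound $<\eps$, apply \refer{satz:QuantitativeInverseFunctionTheorem_wholeSet}, and finally recenter the ball via \refer{lem:Naehe_expVF-id_C0} and the first bound on $\delta$, using $\eps<1/2$. The bookkeeping ($r'\geq r(1-3\eps/2)$ and $\norm{f(0)}<\eps r/2$) matches the paper's, so no gaps.
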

\begin{proof}
	By \refer{lem:Naehe_expVF-id_C0}, for a function $X$ with
	$\hn{X}{1_{\Ball{0}{r}}}{0} <\min\bigl(\nu, \frac{\eps r}{2 \BndFstAblRex{ \Ball{0}{r} }{\nu}} \bigr)$, we have
	\[
		\tag{$\dagger$}\label{est:abschaetzung_expVF_Stelle0}
		\norm{\Rexp{g}(0, X(0) )} < \frac{\eps r}{2}.
	\]
	We set $W \ndef \Ball{0}{r}$.
	Since $\hn{X}{ 1_{\Ball{0}{r}} }{0} < \frac{\eps}{4 (\BndSndAblRex{\Ball{0}{r}}{\nu}  + 1) }$
	and  $\hn{X}{ 1_{\Ball{0}{r}} }{1} < \frac{\eps}{4} < 1$,
	we see with \refer{lem:Naehe_expFunc-id_C1} that
	$\hn{\Rexp{g} \circ(\id{W},  X) - \id{W}}{1_{\Ball{0}{r}} }{1} < \frac{\eps}{2}$.
	This implies that
	\[
		\Opnorm{\FAbl{(\Rexp{g} \circ (\id{W},  X))}(y) - \FAbl{(\Rexp{g} \circ (\id{W},  X))}(x)} < \eps
	\]
	for all $x, y \in \Ball{0}{r}$,
	and that $\FAbl{(\Rexp{g} \circ (\id{W},  X))}(0)$ is invertible with
	\[
		\tag{$\dagger\dagger$}\label{est:Opnorm_inverses_Differential_an0}
		\Opnorm{\FAbl{(\Rexp{g} \circ (\id{W},  X))}(0)^{-1}} < \frac{1}{1-\frac{\eps}{2}}.
	\]
	Since $\eps < \frac{2}{3}$, we conclude that
	$\eps < 1-\frac{\eps}{2} < \frac{1}{\Opnorm{\FAbl{(\Rexp{g} \circ (\id{W},  X))}(0)^{-1}}}$.
	Hence we can apply \refer{satz:QuantitativeInverseFunctionTheorem_wholeSet}
	to see that $\Rexp{g} \circ (\id{W},  X)$ is a diffeomorphism onto its image
	and that the image contains $\Ball{ \Rexp{g}(0,  X(0))}{r'}$, where
	$r' = r \left(\frac{1}{\Opnorm{\FAbl{(\Rexp{g} \circ (\id{W},  X))}(0)^{-1}}} - \eps\right)$.
	From this we deduce using \ref{est:abschaetzung_expVF_Stelle0}, \ref{est:Opnorm_inverses_Differential_an0}
	and the triangle inequality (where we need $\eps < \frac{1}{2}$)
	that the image of $\Rexp{g} \circ (\id{W},  X)$ contains $\Ball{0}{r (1 - 2\eps)}$.
\end{proof}

\paragraph{Superposition with the Riemannian logarithm}

We examine $\RlogVR{g}$.
In particular, we use that $\RlogVR{g}$ is the fiberwise inverse function to $\Rexp{g}$.
We show that its domain $\domLogMax{g}$ is a neighborhood of the diagonal,
and that we can quantify what is contained in it;
and we give estimates for its first derivative.
\\
Further, we examine maps that arise as the composition of $\RlogVR{g}$
with maps of the form $(\id{}, X + \id{})$.
Of particular interest are estimates for the function values and the derivatives
of these maps.
\subparagraph{Uniform estimates for Riemannian norms}
We start by establishing estimates for the Riemannian norms
and a given norm on $\R^d$.
\begin{lem}\label{lem:Vergleich_Normen_Riemann_Kompakta}
	Let $d \in \N^*$, $U \sub \R^d$ open, $g$ a Riemannian metric on $U$.
	Then for each $x \in U$, there exist $V \in \neighO{x}$ and $c, C > 0$ such that
	\[
		c \norm{\cdot} \leq \norm{\cdot}_{g_y} \leq C \norm{\cdot}
	\]
	for all $y \in V$.
\end{lem}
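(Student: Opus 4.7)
The plan is to reduce the pointwise equivalence of norms (which is automatic on the finite-dimensional space $\R^d$) together with continuity of $g$ to a uniform bound via a standard compactness argument on the unit sphere.

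First I would fix a relatively compact open neighborhood $V_0$ of $x$ with $\cl{V_0} \sub U$. Next, consider the continuous function
\[
	\Phi : \cl{V_0} \times S^{d-1} \to \R : (y, h) \mapsto \norm{h}_{g_y} = \sqrt{g((y,h),(y,h))},
\]
where $S^{d-1}$ denotes the unit sphere in $(\R^d, \norm{\cdot})$. Continuity of $\Phi$ follows from continuity (in fact smoothness) of $g$ as a map $U \times \R^d \times \R^d \to \R$ together with positivity of $g$ on the zero-free set $\cl{V_0} \times S^{d-1}$. Since each $g_y$ is positive definite, we have $\Phi(y,h) > 0$ everywhere on $\cl{V_0} \times S^{d-1}$.

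Now the point $(x, h_0) \in \cl{V_0} \times S^{d-1}$ attains a strictly positive value of $\Phi$ for each fixed $h_0$, and by compactness of $S^{d-1}$, $\Phi(\sset{x} \times S^{d-1})$ is a compact subset of $]0, \infty[$ with some positive lower bound $c_0$ and finite upper bound $C_0$. By continuity and compactness, one can shrink to a neighborhood $V \sub V_0$ of $x$ on which the bounds are preserved up to a factor of two; concretely, the preimage $\Phi^{-1}(]c_0/2, 2 C_0[)$ is open in $\cl{V_0} \times S^{d-1}$ and contains $\sset{x} \times S^{d-1}$, so by the tube lemma there exists an open neighborhood $V$ of $x$ with $V \times S^{d-1} \sub \Phi^{-1}(]c_0/2, 2 C_0[)$. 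Setting $c \ndef c_0 / 2$ and $C \ndef 2 C_0$, we obtain $c \leq \norm{h}_{g_y} \leq C$ for all $y \in V$ and $h \in S^{d-1}$.

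Finally I would extend the estimate from unit vectors to arbitrary $h \in \R^d$ using the fact that both $\norm{\cdot}$ and $\norm{\cdot}_{g_y}$ are positively homogeneous (the case $h = 0$ is trivial). The main obstacle, such as it is, lies only in the tube-lemma step that converts pointwise-at-$x$ bounds into a uniform bound on a neighborhood; everything else is immediate from continuity of $g$ and positive definiteness.
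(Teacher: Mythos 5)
Your proposal is correct, but it takes a different route from the paper. You treat both bounds at once via the continuous function $\Phi(y,h) = \norm{h}_{g_y}$ on $\cl{V_0}\times S^{d-1}$, using compactness of the unit sphere and the tube lemma to pass from the strictly positive, bounded values on the slice $\sset{x}\times S^{d-1}$ to a uniform two-sided bound on a tube $V\times S^{d-1}$, and then conclude by homogeneity. The paper instead works with the Gram matrix $G_y = (g((y,e_i),(y,e_j)))_{i,j}$: the upper bound comes from writing $\norm{h}_{g_y}^2 = \SkaPrd{h}{(G_y-G_x)\eval h} + \norm{h}_{g_x}^2$ and choosing $V$ so that $\Opnorm{G_y - G_x} < \eps$, while the lower bound uses the factorization $\norm{h}_{g_y} = \norm{\sqrt{G_y}\eval h}_2 \geq \Opnorm{\sqrt{G_y}^{-1}}^{-1}\norm{h}_2$ together with continuity of $y\mapsto \Opnorm{\sqrt{G_y}^{-1}}$. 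Your argument is more elementary and symmetric (no matrix square roots, no separate treatment of the two inequalities, and it would work verbatim for any continuous assignment of norms, not just those coming from inner products); the paper's argument yields slightly more explicit constants, e.g.\ $C^2 = \widetilde{C}^2 + \eps$, which is in the spirit of the quantitative estimates used elsewhere in \prettyref{susec:RiemannExpLog_VR}. Both are complete proofs; the only step in yours that deserves the explicit justification you give it is the tube-lemma step, and you handle it correctly.
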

\begin{proof}
	In the proof, for $x \in U$ we let $G_x$ denote the matrix $(g((x, e_i), (x, e_j)))_{1 \leq i, j \leq d}$.

	There exists $\widetilde{C} > 0$ such that $\norm{\cdot}_{g_x} \leq \widetilde{C} \norm{\cdot}$.
	Further, for $\eps > 0$ there exists $V \in \neighO{x}$ such that
	\[
		\Opnorm{G_y - G_x} < \eps
	\]
	for all $y \in V$. Hence for $y \in V$ and $h \in \R^d$,
	\[
		\norm{h}_{g_y}^2 = \SkaPrd{h}{G_y \eval h}  - \SkaPrd{h}{G_x \eval h} + \SkaPrd{h}{G_x \eval h}
		= \SkaPrd{h}{(G_y - G_x) \eval h} + \norm{h}_{g_x}^2
		\leq \eps \norm{h}^2 + \widetilde{C}^2 \norm{h}^2
		.
	\]
	From this, we easily deduce the first estimate.

	For the second estimate, we have for $y \in U$ and $h \in \R^d$ that
	\[
		\norm{h}_{g_y}^2 = \SkaPrd{h}{G_y \eval h} = \SkaPrd{A_y\eval h}{A_y \eval h}
		= \norm{A_y \eval h}^2_2
		\geq \tfrac{1}{\Opnorm{A_y^{-1}}^2 } \norm{h}^2_2
		;
	\]
	where $A_y = \sqrt{G_y}$. Since the map $y \mapsto \Opnorm{\sqrt{G_y}^{-1}}$ is continuous,
	and there exists $\widetilde{c} > 0$ such that
	$\norm{\cdot}_2 \geq \widetilde{c} \norm{\cdot}$,
	we see that the assertion holds.
\end{proof}

\begin{defi}
	Let $d \in \N^*$, $U \sub \R^d$ an open nonempty subset, $g$ a Riemannian metric on $U$
	and $K \sub U$ a relatively compact set. We define
	\[
		\QuotNorm{K}
		\ndef \frac{\sup \set{c > 0}{(\forall x \in K)\, c \norm{\cdot} \leq \norm{\cdot}_{g_x} }}%
		{\inf \set{C > 0}{(\forall x \in K)\, \norm{\cdot}_{g_x} \leq C \norm{\cdot} }}
		.
	\]
	Note that because of \refer{lem:Vergleich_Normen_Riemann_Kompakta}, $\QuotNorm{K} \in ]0, 1]$.
\end{defi}

\subparagraph{Applying the parametrized inverse function theorem}
We use \refer{prop:Differential_vollerRang} to derive estimates for the domain and the first derivatives
of $\RlogVR{g}$, under a certain condition on the partial differentials of $\Rexp{g}$.
Further, we show that $\domLogMax{g}$ is a neighborhood of the diagonal.

\begin{defi}
	Let $d \in \N^*$, $U \sub \R^d$ open, $(U, g)$ a Riemannian manifold,
	$V$ a nonempty relatively compact set with $\cl{V} \sub U$ and $\sigma \in ]0,1[$.
	We define
	\[
		\RadExpFibInv[g]{V}{\sigma}
		\ndef
		\sup\set{r \in ]0, \grenzExp{V}{U}[}{(\forall x \in \cl{V})\,\hn{\RexpPar{x} - \id{\R^d}}{1_{\clBall{0}{r}}}{1} < \sigma}.
	\]
	If the metric can not be confused, we may omit it in the notation and just write $\RadExpFibInv{V}{\sigma}$.
	Note that $\RadExpFibInv{V}{\sigma} > 0$ as one can prove using compactness arguments
	and \ref{id:Ableitung_Rexp_Nullschitt}.
\end{defi}

\begin{lem}\label{lem:estimates_constants_log_from_exp}\label{lem:Est_Dom_Rlog}
	Let $d \in \N^*$, $U \sub \R^d$ open, $g$ a Riemannian metric on $U$,
	$V \sub U$ an open, nonempty, relatively compact set, $\sigma \in ]0, 1[$ and
	$\tau \in ]0, \RadExpFibInv[g]{V}{\sigma}[$.
	Then the following assertions hold:
	\begin{assertions}
		\item
		$\BndFstAblRex{V}{\tau} \leq 1 + \sigma$.
	\end{assertions}
	Let $x \in V$. Then
	\begin{assertions}[resume]
		\item\label{ass1:Rexp_Diffeo}
		$\rest{\RexpPar{x}}{\Ball{0}{\tau} }$ is a diffeomorphism onto its image,

		\item\label{ass1:Kugel_in_img_Rexp}
		$\Ball{x}{(1 - \sigma) r} \sub \RexpPar{x}(\Ball{0}{r})$ for all $r \in [0,\tau]$, and

		\item\label{ass1:est_Lip_Rlog}
		$\rest{(\rest{\RexpPar{x}}{\Ball{0}{\tau}}^{-1})}{\Ball{x}{(1 - \sigma)\tau}}$ is $\tfrac{1}{1 - \sigma}$-Lipschitz.
	\end{assertions}
	Finally, assume that $\tau <  \QuotNorm{V} \RadExpFibInv[g]{V}{\sigma}$. Then
	\begin{assertions}[resume]
		\item\label{ass1:contaiment_dom_log}
		$\bigcup_{x \in V} \sset{x} \times \RexpPar{x}(\Ball{0}{\tau}) \sub \domLogMax{g}$.
	\end{assertions}
\end{lem}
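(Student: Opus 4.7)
The single observation that drives the entire proof is that $\FAbl{\RexpPar{x}}(0)=\id{\R^d}$, which follows from \refer{id:Ableitung_Rexp_Nullschitt} by setting $v=0$ there. Since the condition defining $\RadExpFibInv[g]{V}{\sigma}$ is monotone in $r$, the hypothesis $\tau<\RadExpFibInv[g]{V}{\sigma}$ yields $\Opnorm{\FAbl{\RexpPar{x}}(y)-\id{\R^d}}<\sigma$ for all $x\in\cl{V}$ and $y\in\clBall{0}{\tau}$. Assertion~(a) is then immediate from the triangle inequality $\Opnorm{\FAbl{\RexpPar{x}}(y)}\leq\Opnorm{\FAbl{\RexpPar{x}}(y)-\id{\R^d}}+1<1+\sigma$.

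For assertions~(b)--(d) I would fix $x\in V$ and apply \refer{satz:QuantitativeInverseFunctionTheorem_wholeSet} to the smooth map $\rest{\RexpPar{x}}{\Ball{0}{\tau}}$ expanded around the point $0\in\Ball{0}{\tau}$, with parameter $\delta=\sigma$. The uniform sup estimate on $\Opnorm{\FAbl{\RexpPar{x}}(y)-\FAbl{\RexpPar{x}}(0)}$ was established in the previous paragraph, and since $\Opnorm{\FAbl{\RexpPar{x}}(0)^{-1}}=1$ the admissibility requirement $\delta<1/\Opnorm{\FAbl{\RexpPar{x}}(0)^{-1}}$ reduces to $\sigma<1$, which is part of the hypothesis. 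The conclusions of the theorem specialize in this setting as follows: the homeomorphism-plus-smooth-inverse part gives~(b); applying the image statement with $x'=0$ and radius $r\in[0,\tau]$ yields $\Ball{x}{r(1-\sigma)}\sub\RexpPar{x}(\Ball{0}{r})$, which is~(c); and the Lipschitz constant collapses to $\tfrac{1}{1-\sigma}$ on $\Ball{x}{(1-\sigma)\tau}$, which is~(d).

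For assertion~(e) the additional ingredient is the norm comparison encoded by $\QuotNorm{V}$. I would choose $c,C>0$ sufficiently close to the extremal constants so that $c\norm{\cdot}\leq\norm{\cdot}_{g_y}\leq C\norm{\cdot}$ for all $y\in\cl{V}$ and the ratio $c/C$ is arbitrarily close to $\QuotNorm{V}$. Then $\BallTang{g_x}{0}{r}\sub\Ball{0}{r/c}$, so by~(b) the map $\RexpPar{x}$ is injective on every $g_x$-ball of radius $r\leq c\cdot\RadExpFibInv[g]{V}{\sigma}$, and hence $r_x\geq c\cdot\RadExpFibInv[g]{V}{\sigma}$. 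Conversely $\Ball{0}{\tau}\sub\BallTang{g_x}{0}{C\tau}$, and the hypothesis $\tau<\QuotNorm{V}\RadExpFibInv[g]{V}{\sigma}$, combined with $c/C$ chosen close enough to $\QuotNorm{V}$, forces $C\tau\leq c\cdot\RadExpFibInv[g]{V}{\sigma}\leq r_x$. Thus $\Ball{0}{\tau}\sub\BallTang{g_x}{0}{r_x}$, and taking images under $\RexpPar{x}$ together with the definition of $\domLogMax{g}$ gives the required inclusion. The only real obstacle is this final piece of bookkeeping, where the Euclidean--Riemannian comparison has to be applied in opposite directions on the two sides of the chain; everything else reduces cleanly to the quantitative inverse function theorem once $\FAbl{\RexpPar{x}}(0)=\id{}$ is noted.
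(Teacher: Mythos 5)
Your proof is correct and takes essentially the same route as the paper's: (a) by the triangle inequality, (b)--(d) by feeding the uniform bound $\Opnorm{\FAbl{\RexpPar{x}}(y)-\id{\R^d}}<\sigma$ (valid since $\FAbl{\RexpPar{x}}(0)=\id{\R^d}$) into a quantitative inverse function theorem, and (e) by the same two-sided Euclidean--Riemannian ball comparison via $\QuotNorm{V}$; the paper merely invokes the parameterized version \refer{prop:Differential_vollerRang} where you invoke \refer{satz:QuantitativeInverseFunctionTheorem_wholeSet} for each fixed $x$, which is interchangeable here. One cosmetic point: to obtain $\Ball{x}{(1-\sigma)r}\sub\RexpPar{x}(\Ball{0}{r})$ you should apply the theorem with domain $\Ball{0}{r}$ itself (its image statement only places the ball inside the image of the chosen domain), which is a trivial restriction since the hypotheses hold on every subball.
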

\begin{proof}
	The assertion about $\BndFstAblRex{V}{\tau}$ follows from a simple application of the triangle inequality
	to $\Opnorm{\parFAbl{2}{\Rexp{g}} \pm \id{\R^d} }$.
	To prove \ref{ass1:Rexp_Diffeo}-\ref{ass1:est_Lip_Rlog}, let $x, y \in V$. Then for each $z \in \Ball{0}{\tau}$, we have
	\[
		\Opnorm{\parFAbl{2}{\Rexp{g}}(x, z) -  \parFAbl{2}{\Rexp{g}}(x, 0) }
		= \Opnorm{\parFAbl{2}{\Rexp{g}}(x, z) -  \id{\R^d} }
		< \sigma .
	\]
	Further, $\sigma < 1 = \frac{1}{\Opnorm{(\parFAbl{2}{\Rexp{g}}(x, 0))^{-1}}}$.
	So we can apply \refer{prop:Differential_vollerRang} to derive
	the assertions about $\rest{\RexpPar{x}}{\Ball{0}{\tau}}$ and
	$\rest{(\rest{\RexpPar{x}}{\Ball{0}{\tau}}^{-1})}{\Ball{x}{(1 - \sigma)\tau}}$.

	\ref{ass1:contaiment_dom_log}
	We see using \refer{lem:Vergleich_Normen_Riemann_Kompakta}
	and standard compactness arguments
	that for each $x \in V$,
	\[
		\Ball{0}{\tau} \sub \BallTang{g_x}{0}{\tau C} \sub \Ball{0}{\frac{\tau}{\QuotNorm{V}}}
		;
	\]
	here $C$ denotes the denominator in the definition of $\QuotNorm{V}$.
	Since $\frac{\tau}{\QuotNorm{V}} < \RadExpFibInv[g]{V}{\sigma}$ by our assumption,
	we see with \ref{ass1:Rexp_Diffeo} that each map $\rest{\RexpPar{x}}{ \BallTang{g_x}{0}{\tau C} }$
	is injective, and can conclude that $\sset{x}\times\RexpPar{x}(\Ball{0}{\tau}) \sub \domLogMax{g}$.
\end{proof}

\subparagraph{Estimates for function values and first derivatives}
Before we establish the estimates, we make the following definitions.
\newcommand{\domLogProd}[2]{\ensuremath{#1^{\times #2}}}
\newcommand{\domLogProdCL}[2]{\ensuremath{#1^{\cl{\times} #2}}}
\begin{defi}
	Let $X$ be a normed space, $S \sub X$ and $\tau > 0$. We set
	\[
		\domLogProd{S}{\tau} \ndef \bigcup_{x \in S} \sset{x} \times \Ball{x}{\tau}
	\qquad
	\text{and}
	\qquad
		\domLogProdCL{S}{\tau} \ndef \bigcup_{x \in \cl{S}} \sset{x} \times \clBall{x}{\tau}.
	\]
\end{defi}

\begin{defi}
	Let $d \in \N^*$, $U \sub \R^d$ an open nonempty subset, $g$ a Riemannian metric on $U$
	and $K \sub U$ a relatively compact set.
	By \refer{lem:Est_Dom_Rlog} (more precisely, \ref{ass1:Kugel_in_img_Rexp} and \ref{ass1:contaiment_dom_log}),
	there exists $\tau > 0$ such that $\domLogProd{\cl{K}}{\tau} \sub \domLogMax{g}$.
	We denote the supremum of such $\tau$ by $\grenzLog[g]{K}{U}$.
	If the metric discussed is obvious, we may omit it and just write $\grenzLog{K}{U}$.

	Now let $0 < \delta < \grenzLog{K}{U}$.
	We define
	\[
		\BndFstAblRlog[g]{K}{\delta}
		\ndef
		\sup_{x \in \cl{K}} \hn{\pi_2 \circ \RlogPar{x}}{1_{ \clBall{x}{\delta}} }{1}
		= \hn{\parFAbl{2}{ \RlogVR{g} }}{ 1_{ \domLogProdCL{\cl{K}}{\delta} }}{0}
	\]
	and
	\[
		\BndSndAblRlog[g]{K}{\delta}
		\ndef
		\hn{ \RlogVR{g} }{1_{\domLogProdCL{\cl{K}}{\delta}}}{2}.
	\]
	As above, if the metric discussed is clear, we may omit it in the notation and just write
	$\BndFstAblRlog{K}{\delta}$ or $\BndSndAblRlog{K}{\delta}$, respectively.
	Note that $\hn{\cdot}{1_{\domLogProdCL{\cl{K}}{\delta}}}{2}$
	relates to the norm $\norm{(v, w)} = \max(\norm{v}, \norm{w})$ on $\R^{2 d}$.
\end{defi}
We rephrase some results of \refer{lem:estimates_constants_log_from_exp}.
\begin{lem}\label{lem:Estimates_Rex_Rlog_DomRlog}
	Let $d \in \N^*$, $U \sub \R^d$ open, $g$ a Riemannian metric on $U$,
	$W \sub U$ an open, nonempty, relatively compact set, $\sigma \in ]0, 1[$ and
	$\tau \in ]0, \RadExpFibInv[g]{W}{\sigma} \QuotNorm{W}[$.
	Then $\BndFstAblRex{W}{\tau} \leq 1 + \sigma$, $(1 - \sigma) \tau < \grenzLog{W}{U}$
	and
	$\BndFstAblRlog{W}{(1 - \sigma) \tau} \leq \tfrac{1}{1 - \sigma}$.
\end{lem}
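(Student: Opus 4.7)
The plan is to deduce all three assertions directly from \refer{lem:estimates_constants_log_from_exp}, exploiting the fact that $\QuotNorm{W} \in ]0,1]$. The only technical device I need is to work with a slightly enlarged auxiliary radius $\tau' > \tau$; this serves two purposes, namely to pass from open balls (where the earlier lemma's conclusions live) to closed balls (which the definitions of $\grenzLog{}{}$ and $\BndFstAblRlog{}{}$ use), and to absorb the passage from $W$ to $\cl{W}$ that is needed because $\grenzLog{W}{U}$ and $\BndFstAblRlog{W}{\cdot}$ are defined via $\cl{W}$.

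First, since $\QuotNorm{W} \leq 1$, the hypothesis $\tau < \RadExpFibInv[g]{W}{\sigma} \QuotNorm{W}$ already gives $\tau < \RadExpFibInv[g]{W}{\sigma}$, so the first assertion of \refer{lem:estimates_constants_log_from_exp} yields $\BndFstAblRex{W}{\tau} \leq 1 + \sigma$ directly.

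Next I would choose $\tau' \in ]\tau, \RadExpFibInv[g]{W}{\sigma} \QuotNorm{W}[$ (nonempty by hypothesis) and apply parts \ref{ass1:Kugel_in_img_Rexp}, \ref{ass1:est_Lip_Rlog}, \ref{ass1:contaiment_dom_log} of \refer{lem:estimates_constants_log_from_exp} with $\tau'$ in place of $\tau$. For each $x \in \cl{W}$---the pointwise conclusions for $x \in W$ extend to the closure because the constants $\RadExpFibInv[g]{W}{\sigma}$ and $\QuotNorm{W}$ are already defined uniformly over $\cl{W}$, so the arguments carry over by continuity---I obtain
\[
	\sset{x} \times \Ball{x}{(1-\sigma)\tau'} \;\sub\; \sset{x} \times \RexpPar{x}(\Ball{0}{\tau'}) \;\sub\; \domLogMax{g}
\]
together with the fact that $\rest{(\rest{\RexpPar{x}}{\Ball{0}{\tau'}})^{-1}}{\Ball{x}{(1-\sigma)\tau'}}$ is $\tfrac{1}{1-\sigma}$-Lipschitz. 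From the displayed inclusion, $\grenzLog{W}{U} \geq (1-\sigma)\tau' > (1-\sigma)\tau$, proving the second assertion.

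For the third, the Lipschitz bound implies $\Opnorm{\FAbl{(\pi_2 \circ \RlogPar{x})}(y)} \leq \tfrac{1}{1-\sigma}$ for each $y \in \Ball{x}{(1-\sigma)\tau'}$, via the standard fact that on a convex open set the Lipschitz constant of a $\FC{}{}{1}$-map bounds the supremum of the operator norm of its derivative. Since $\clBall{x}{(1-\sigma)\tau} \sub \Ball{x}{(1-\sigma)\tau'}$ (as $\tau < \tau'$), taking suprema first over $y$ in the closed ball and then over $x \in \cl{W}$ yields $\BndFstAblRlog{W}{(1-\sigma)\tau} \leq \tfrac{1}{1-\sigma}$. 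No serious obstacle is anticipated; the whole argument is bookkeeping around closures and the open-versus-closed-ball distinction, which is precisely what the auxiliary $\tau'$ handles.
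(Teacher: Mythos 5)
Your proposal is correct and follows essentially the same route as the paper, whose proof of this lemma consists of the single sentence that the assertions follow from \refer{lem:estimates_constants_log_from_exp}; you have merely made explicit the bookkeeping (using $\QuotNorm{W}\leq 1$, the auxiliary radius $\tau'$, and the passage from open to closed balls and from $W$ to $\cl{W}$) that the paper leaves implicit.
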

\begin{proof}
	The assertions follow from \refer{lem:estimates_constants_log_from_exp}.
\end{proof}
Now we prove the estimates.
\begin{lem}\label{lem:Norm_LogFunc_C0}
	Let $d \in \N^*$, $U \sub \R^d$ an open nonempty subset, $g$ a Riemannian metric on $U$,
	$K \sub U$ a relatively compact set and $\delta \in ]0, \grenzLog{K}{U}[$.
	\begin{assertions}
		\item\label{ass:Norm_LogxFunc_C0}
		Then for all $x \in K$ and $y \in \clBall{x}{\delta}$ the following estimate holds:
		\[
			\norm{\RlogVR{g}(x, y + x)}
			\leq \BndFstAblRlog{K}{\delta} \norm{y}.
		\]

		\item\label{ass:Norm_LogFunc_C0}
		Let $X : K \to \R^d$ with $\hn{X}{1_K}{0} \leq \delta$.
		Then for all $x \in K$, the following estimate holds:
		\[
			\norm{(\RlogVR{g} \circ (\id{K}, X + \id{K}))(x)}
			\leq \BndFstAblRlog{K}{\delta}\norm{X(x)}
			.
		\]
	\end{assertions}
\end{lem}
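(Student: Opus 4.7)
The plan is to mimic, almost verbatim, the proof of \refer{lem:Naehe_expVF-id_C0}, with the Riemannian logarithm in place of the exponential.

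For part (a), the starting point is the observation that $\RlogVR{g}(x, x) = 0$: this holds because $\RlogPar{x}$ is, by construction, the fibrewise inverse of $\RexpPar{x}$ and $\RexpPar{x}(0_x) = x$. Applying the fundamental theorem of calculus in the second argument of $\RlogVR{g}$ along the line segment from $x$ to $x + y$, I would write
\[
	\RlogVR{g}(x, y + x)
	= \RlogVR{g}(x, y + x) - \RlogVR{g}(x, x)
	= \Mint{\parFAbl{2}{\RlogVR{g}}(x, x + t y) \eval y}{t}.
\]
For every $t \in [0, 1]$, the point $(x, x + ty)$ lies in $\domLogProdCL{\cl{K}}{\delta}$, since $\norm{t y} \leq \norm{y} \leq \delta$ and $x \in K$. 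Hence the very definition of $\BndFstAblRlog{K}{\delta}$ yields $\Opnorm{\parFAbl{2}{\RlogVR{g}}(x, x + ty)} \leq \BndFstAblRlog{K}{\delta}$. Pulling the norm into the integral then gives the asserted estimate.

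Part (b) is then an immediate consequence of (a): since $\hn{X}{1_K}{0} \leq \delta$, we have $X(x) \in \clBall{0}{\delta}$ for each $x \in K$, so applying (a) with $y \ndef X(x)$ produces exactly the desired bound.

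There is essentially no obstacle here; the only small check is that the segment from $x$ to $x + y$ stays in the ball $\clBall{x}{\delta}$ on which the derivative bound applies, and this is immediate from $\norm{ty} \leq \norm{y} \leq \delta$ and the hypothesis $\delta < \grenzLog{K}{U}$. In particular, no further appeal to the inverse function theorem or to the geometric lemmas above is required; all the geometric content has already been absorbed into the definition of $\BndFstAblRlog{K}{\delta}$.
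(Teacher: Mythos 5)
Your argument is correct and is essentially identical to the paper's proof: part (a) is obtained from $\RlogVR{g}(x,x)=0$ and the mean value theorem in integral form applied in the second argument, followed by the definition of $\BndFstAblRlog{K}{\delta}$, and part (b) follows by substituting $y = X(x)$. The extra check that the segment $x + ty$ stays in $\domLogProdCL{\cl{K}}{\delta}$ is a detail the paper leaves implicit, and your justification of it is accurate.
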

\begin{proof}
	\ref{ass:Norm_LogxFunc_C0}
	We calculate using the mean value theorem and $\RlogVR{g}(x, x) = 0$:
	\[
		\RlogVR{g}(x, y + x)
		= \RlogVR{g}(x, y + x) - \RlogVR{g}(x, x)
		= \Mint{ \parFAbl{2}{\RlogVR{g}} (x, x + t y) \eval y }{t}.
	\]
	From this and the definition of $\BndFstAblRlog{K}{\delta}$, we easily derive the assertion.

	\ref{ass:Norm_LogFunc_C0}
	This is an easy consequence of \ref{ass:Norm_LogxFunc_C0}.
\end{proof}

\begin{lem}
	Let $d \in \N^*$, $U \sub \R^d$ open, $g$ a Riemannian metric on $U$,
	$W \sub U$ an open, nonempty, relatively compact set,
	$\tau \in ]0, \grenzLog{W}{U}[$
	and $X \in \ConDiff{W}{ \Ball{0}{\tau}}{1}$.
	Then for $x \in W$
	\begin{equation}\label{est:1-norm_SP_Rlog}
		\Opnorm{\FAbl{ (\RlogVR{g} \circ (\id{W}, X + \id{W})) (x) }}
		\\
		\leq \BndSndAblRlog[g]{W}{\tau} \norm{X(x)}  + \BndFstAblRlog[g]{W}{\tau}\Opnorm{\FAbl{X}(x)}
	\end{equation}
\end{lem}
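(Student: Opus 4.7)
The plan is to follow the same structure as the proof of \refer{lem:Naehe_expFunc-id_C1}, exchanging the roles of $\Rexp{g}$ and $\RlogVR{g}$, and using the diagonal identity $\dA{\RlogVR{g}}{x, x}{v, w} = w - v$ stated in the definition of $\RlogVR{g}$ in place of the corresponding identity \ref{id:Ableitung_Rexp_Nullschitt} for $\Rexp{g}$.

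First I would verify that $F : W \to \R^d$, $F(x) \ndef \RlogVR{g}(x, X(x) + x)$, is well-defined and $\ConDiff{}{}{1}$: since $\norm{X(x)} < \tau < \grenzLog{W}{U}$, the point $(x, X(x) + x)$ lies in $\domLogProd{W}{\tau} \sub \domLogMax{g}$, and the entire line segment $t \mapsto (x, x + t X(x))$ for $t \in [0, 1]$ lies in $\domLogProdCL{\cl{W}}{\tau}$, which is where the relevant second-derivative bound $\BndSndAblRlog[g]{W}{\tau}$ is by definition valid.

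Next I would apply the chain rule to compute, for $x \in W$ and $v \in \R^d$,
\[
	\FAbl{F}(x)\eval v = \parFAbl{1}{\RlogVR{g}}(x, X(x) + x)\eval v + \parFAbl{2}{\RlogVR{g}}(x, X(x) + x)\eval (v + \FAbl{X}(x)\eval v),
\]
and, using $\FAbl{\RlogVR{g}}(x, x)\eval (v, v) = v - v = 0$, rewrite this as
\[
	\FAbl{F}(x)\eval v = \bigl[\FAbl{\RlogVR{g}}(x, X(x) + x) - \FAbl{\RlogVR{g}}(x, x)\bigr]\eval (v, v) + \parFAbl{2}{\RlogVR{g}}(x, X(x) + x)\eval (\FAbl{X}(x)\eval v).
\]
The second summand is immediately bounded in operator norm by $\BndFstAblRlog[g]{W}{\tau}\,\Opnorm{\FAbl{X}(x)}$. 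For the first summand I would apply the mean value theorem in the second argument, writing
\[
	\FAbl{\RlogVR{g}}(x, X(x) + x) - \FAbl{\RlogVR{g}}(x, x) = \Mint{\FAbl{\FAbl{\RlogVR{g}}}(x, x + t X(x))\eval (0, X(x))}{t},
\]
and bound this using the definition of $\BndSndAblRlog[g]{W}{\tau}$.

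The only subtlety, which is the analogue of the care needed in \refer{lem:Naehe_expFunc-id_C1}, is the choice of norm on $\R^{2d}$: the constant $\BndSndAblRlog[g]{W}{\tau}$ is defined with respect to the maximum norm $\norm{(v, w)} = \max(\norm{v}, \norm{w})$, so that $\norm{(0, X(x))} = \norm{X(x)}$ and $\norm{(v, v)} = \norm{v}$. With this choice, the mean-value estimate yields exactly $\BndSndAblRlog[g]{W}{\tau}\,\norm{X(x)}\,\norm{v}$, which combined with the bound on the second summand produces \ref{est:1-norm_SP_Rlog}. No genuinely hard step is involved; the whole proof is a bookkeeping exercise once the diagonal identity for $\RlogVR{g}$ and the correct interpretation of $\BndSndAblRlog[g]{W}{\tau}$ are in place.
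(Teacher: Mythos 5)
Your proposal is correct and follows essentially the same route as the paper's proof: the chain rule decomposition into $\FAbl{\RlogVR{g}}(x, X(x)+x)\eval(v,v) + \parFAbl{2}{\RlogVR{g}}(x, X(x)+x)\eval\FAbl{X}(x)\eval v$, the bound on the second summand by $\BndFstAblRlog[g]{W}{\tau}\Opnorm{\FAbl{X}(x)}$, and the mean value theorem applied to the first summand after subtracting $\FAbl{\RlogVR{g}}(x,x)\eval(v,v)=0$ are exactly the steps the paper takes. Your additional remarks on well-definedness of the composition and on the maximum norm convention on $\R^{2d}$ are correct and merely make explicit what the paper leaves tacit.
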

\begin{proof}
	We get with the Chain Rule that
	\begin{multline*}
		\FAbl{ (\RlogVR{g} \circ (\id{W}, X + \id{W})) (x) }
		= \FAbl{ \RlogVR{g} \circ (\id{W}, X + \id{W}) (x) }
		\MaMu (\idco, \FAbl{X}(x) + \idco)
		\\
		= \FAbl{ \RlogVR{g} \circ (\id{W}, X + \id{W}) (x) } \MaMu (\idco, \idco)
		+ \parFAbl{2}{\RlogVR{g} \circ (\id{W}, X + \id{W}) (x)} \MaMu \FAbl{X}(x).
	\end{multline*}
	We get the desired estimate for the second summand, and now treat the first.
	To this end, let $v \in \R^d$. Then we get, using that $\FAbl{\RlogVR{g}}(x, x)\eval (v, v) = v - v = 0$:
	\begin{multline*}
		\FAbl{ \RlogVR{g} \circ (x, X(x) + x)  }\eval (v,v)
		- \FAbl{\RlogVR{g}}(x, x)\eval (v, v)
		\\
		= \Mint{\FAbl{(\FAbl{\RlogVR{g}})}(x, x + t X(x))\eval(0, X(x)) }{t}\eval(v,v)
		.
	\end{multline*}
	From this, we also get the desired estimate.
\end{proof}

\section{Spaces of weighted vector fields on manifolds}

We define spaces $\CcFM{M}{k}{\atlasA}$ of weighted vector fields on manifolds,
where $\atlasA$ is an atlas for $M$.
We do this in such a way that the map
$\CcFM{M}{k}{\atlasA} \to \CFpro{U_\kappa}{\R^d}{\WeightsAtl{\GewFunk}{\atlasA}}{k}{\kappa}{\atlasA}$
that sends a vector field to the family of its localizations is an embedding.
Of particular concern is when $\CcFM{M}{k}{\atlasA} = \CcFM{M}{k}{\atlasB}$
for another atlas $\atlasB$. We derive a criterion on $\GewFunk$ ensuring this.

Further, we will discuss the simultaneous composition and inversion of weighted functions
that arise as simultaneous superposition with the localized exponential maps.
Again, this will be possible if the weights satisfy certain conditions.

After having made assumptions on the weights,
we have to know if there exist weight sets that satisfy them.
In particular, we will prove that every set of weights has a \enquote{\minSatExt}.

\subsection{Definition and properties}\label{susec:Def_Weighted_VFs}

We give the definition of weighted vector fields.
\begin{defi}[Weighted vector fields and localizations]
	Let $d \in \N^*$, $M$ a $d$-dimensional manifold,
	$\atlasA = \sset{\kappa : \widetilde{U}_\kappa \to U_\kappa}$ an atlas for $M$,
	$\GewFunk \sub \cl{\R}^M$ nonempty and $k\in \cl{\N}$.
	For $f \in \GewFunk$ and $\ell \in \N$ with $\ell \leq k$,
	we define
	\[
		\hnM{\cdot}{f}{\ell}{\atlasA}
		:
		\prod_{\kappa \in \atlasA} \ConDiff{U_\kappa}{\R^d}{k} \to [0, \infty]
		:
		\famiAtl{\gamma_\kappa}{\atlasA}
		\mapsto
		\sup_{\kappa \in \atlasA} \hn{\gamma_\kappa}{f \circ \kappa^{-1}}{\ell}.
	\]
	For $X \in \VecFields[k]{M}$, we define
	\[
		\hnM{X}{f}{\ell}{\atlasA}
		\ndef \hnM{ \famiAtl{\VectFLok{X}{\kappa}}{\atlasA} }{f}{\ell}{\atlasA}
	\]
	and with that
	\[
		\CcFM{M}{k}{\atlasA}
		\ndef
		\set{X \in \VecFields[k]{M}}%
		{(\forall f \in \GewFunk, \ell \in \N, \ell \leq k)\, \hnM{X}{f}{\ell}{\atlasA} < \infty}.
	\]
	Obviously $\CcFM{M}{k}{\atlasA}$ is a vector space.
	We endow it with the locally convex topology induced by the seminorms $\hnM{\cdot}{f}{\ell}{\atlasA}$.
	We call its elements \emph{weighted vector fields}.
	Furthermore, we set for $f \in \GewFunk$ and $\kappa \in \atlasA$
	\[
		\weightLok{f}{\kappa} \ndef f \circ \kappa^{-1} : U_\kappa \to \cl{\R}
		\qquad
		\text{and}
		\qquad
		\WeightsLok{\GewFunk}{\kappa} \ndef \set{\weightLok{f}{\kappa}}{f \in \GewFunk}.
	\]
	Finally, we define
	\[
		\weightAtl{f}{\atlasA}
		\ndef \disjointUkM{\weightLok{f}{\kappa} }
		\in \cl{\R}^{\disjointUkM{U_\kappa} }
		\qquad
		\text{and}
		\qquad
		\WeightsAtl{\GewFunk}{\atlasA}
		\ndef \set{ \weightAtl{f}{\atlasA} }{f \in \GewFunk}.
	\]
\end{defi}

\begin{lem}\label{lem:WeiVF_closed_image}
	Let $d \in \N^*$, $M$ a $d$-dimensional manifold,
	$\atlasA = \sset{\kappa : \widetilde{U}_\kappa \to U_\kappa}$ an atlas for $M$,
	$k\in \cl{\N}$ and $\GewFunk \sub \cl{\R}^M$ such that for each $p \in M$,
	there exists $f_p \in \GewFunk$ with $f_p(p) \neq 0$.
	Then the map
	\[
		\embMcWatl{\atlasA}
		:
		\CcFM{M}{k}{\atlasA} \to
		\CFpro{U_\kappa}{\R^d}{\WeightsAtl{\GewFunk}{\atlasA}}{k}{\kappa}{\atlasA}
		: \phi \mapsto \fami{\phi_\kappa}{\kappa}{\atlasA}
	\]
	is a linear topological embedding, with closed image.
\end{lem}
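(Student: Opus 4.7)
The proof decomposes naturally into three parts: linearity/injectivity, the topological embedding property, and closedness of the image. For the first two, I would argue as follows. Linearity of $\embMcWatl{\atlasA}$ is immediate from the linearity of each localization $X \mapsto \VectFLok{X}{\kappa} = \dA{\kappa}{}{} \circ X \circ \kappa^{-1}$. Injectivity follows because $\atlasA$ covers $M$: if all $\VectFLok{X}{\kappa}$ vanish, then $X$ vanishes on every chart domain, hence on all of $M$. The topological embedding property then requires essentially no work: by the very definition, the seminorms $\hnM{\cdot}{f}{\ell}{\atlasA}$ generating the topology on $\CcFM{M}{k}{\atlasA}$ are the pullbacks under $\embMcWatl{\atlasA}$ of the seminorms $\hn{\cdot}{\weightAtl{f}{\atlasA}}{\ell}$ generating the topology on the target restricted product, so the source topology coincides with the subspace topology.

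For closedness of the image I would characterize it via the chart-transition compatibility conditions. By \refer{lem:SP_mitRexp-Vergleich_lokal_global}(c), a family $\famiAtl{\gamma_\kappa}{\atlasA}$ in the restricted product lies in the image of $\embMcWatl{\atlasA}$ if and only if, for all $\kappa, \phi \in \atlasA$ and all $q \in \widetilde{U}_\kappa \cap \widetilde{U}_\phi$,
\[
\Tang{(\phi \circ \kappa^{-1})}(\kappa(q), \gamma_\kappa(\kappa(q))) = (\phi(q), \gamma_\phi(\phi(q))).
\]
The forward direction is exactly this lemma; for the converse one defines $X(q) \ndef \dA{\kappa^{-1}}{\kappa(q)}{\gamma_\kappa(\kappa(q))}$ in any chart $\kappa$ around $q$, which is well-defined by compatibility, smooth because $\VectFLok{X}{\kappa} = \gamma_\kappa$ on $U_\kappa$, and has finite $\hnM{\cdot}{f}{\ell}{\atlasA}$-seminorms because these coincide with the restricted-product seminorm values of $\famiAtl{\gamma_\kappa}{\atlasA}$.

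It then remains to show that the compatibility condition is closed in the target topology. The key observation -- and the only place where the hypothesis on $\GewFunk$ enters -- is that pointwise evaluations are continuous: given $q \in \widetilde{U}_\kappa$ and a weight $f_q \in \GewFunk$ with $f_q(q) \neq 0$, one has
\[
\abs{f_q(q)} \cdot \norm{\gamma_\kappa(\kappa(q))} \leq \hn{\famiAtl{\gamma_\kappa}{\atlasA}}{f_q}{0},
\]
so evaluation at $\kappa(q)$ is dominated by a continuous seminorm on the target. Since $\Tang{(\phi \circ \kappa^{-1})}$ is continuous on $U_\kappa \times \R^d$, each individual compatibility equation cuts out a closed linear subspace of the restricted product, and the image is the intersection over all $\kappa, \phi \in \atlasA$ and $q \in \widetilde{U}_\kappa \cap \widetilde{U}_\phi$ of these closed subspaces, hence closed. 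I do not foresee any substantive obstacle here; the hypothesis on $\GewFunk$ is precisely what is needed to make pointwise evaluation continuous, and from that the closed-image argument is essentially a pointwise-limit bookkeeping.
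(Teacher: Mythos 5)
Your proposal is correct and follows essentially the same route as the paper's proof: the image is characterized by the chart-transition compatibility identity, and the hypothesis on $\GewFunk$ is used exactly where you use it, namely to make pointwise evaluation continuous so that the compatibility conditions survive passage to the limit (the paper phrases this via convergent nets, you via an intersection of closed subspaces, which is the same argument). Your proposal is in fact slightly more explicit than the paper in verifying that a compatible family in the restricted product actually assembles into an element of $\CcFM{M}{k}{\atlasA}$, a point the paper leaves implicit.
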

\begin{proof}
	That the map is defined and an embedding is obvious from the definition of
	$\CcFM{M}{k}{\atlasA}$ and $\CFpro{U_\kappa}{\R^d}{\WeightsAtl{\GewFunk}{\atlasA}}{k}{\kappa}{\atlasA}$.
	To see that the image is closed, let $\famiI{X^i}$ be a net in $\CcFM{M}{k}{\atlasA}$
	such that $\famiI{\embMcWatl{\atlasA}(X^i)}$ converges to $\famiAtl{X_\kappa}{\atlasA}$.
	We have to show that for $\kappa_1, \kappa_2 \in \atlasA$
	with $\widetilde{U}_{\kappa_1} \cap \widetilde{U}_{\kappa_2} \neq \emptyset$,
	\[
		\tag{\ensuremath{\dagger}}
		\label{id:char_VF_Kartenwechsel}
		\rest{\VectFLok{X}{\kappa_1}}{\kappa_1(\widetilde{U}_{\kappa_1} \cap \widetilde{U}_{\kappa_2})}
		= \dA{(\kappa_1 \circ \kappa_2^{-1})}{}{} \circ (\id{U_{\kappa_2}}, \VectFLok{X}{\kappa_2})
			\circ \rest{\kappa_2\circ \kappa_1^{-1}}{\kappa_1(\widetilde{U}_{\kappa_1} \cap \widetilde{U}_{\kappa_2})}
		.
	\]
	But since the stated assumption on $\GewFunk$ implies that $\famiI{X^i_\kappa}$
	converges pointwise to $X_\kappa$ for each $\kappa \in \atlasA$,
	and since \ref{id:char_VF_Kartenwechsel} holds for all $X^i_{\kappa_1}$ and $X^i_{\kappa_2}$,
	we see that it also holds for $X_{\kappa_1}$ and $X_{\kappa_2}$.
\end{proof}

\subsubsection{Comparison of weighted vector fields with regard to different atlases}
We examine the relationship between spaces $\CcFM{M}{k}{\atlasA}$ and $\CcFM{M}{k}{\atlasB}$
for atlases $\atlasA$ and $\atlasB$.
To this end, we define some terminology for atlases.
\begin{defi}
	Let $d \in \N^*$, $M$ a $d$-dimensional manifold,
	and $\atlasA = \sset{\kappa : \widetilde{U}_\kappa \to U_\kappa}$ an atlas for $M$.
	We call $\atlasA$ \emph{locally finite} if $\famiAtl{\widetilde{U}_\kappa}{\atlasA}$
	is a locally finite cover of $M$.
	Let $\atlasB$ be another atlas for $M$. We call $\atlasB$ \emph{subordinate to $\atlasA$}
	if for each chart $\kappa : \widetilde{V}_\kappa \to V_\kappa$ of $\atlasB$
	there exists $\hat{\kappa} \in \atlasA$ such that
	$\kappa = \rest[V_\kappa]{\hat{\kappa}}{\widetilde{V}_\kappa}$.
	Finally, we define
	\[
		\atProd{\atlasA}{\atlasB} \ndef \set{(\kappa, \phi) \in \atlasA \times \atlasB}{\widetilde{U}_\kappa \cap \widetilde{U}_\phi \neq \emptyset}
	\]
	and
	\[
		\atCap{\atlasA}{\atlasB} \ndef \set{\rest{\kappa}{\widetilde{U}_\kappa \cap \widetilde{U}_\phi} }{(\kappa, \phi) \in \atProd{\atlasA}{\atlasB}}.
	\]
\end{defi}
We state two easy results. First, we show that $\CcFM{M}{k}{\atlasA} = \CcFM{M}{k}{\atCap{\atlasA}{\atlasB}}$,
and that $\CcFM{M}{k}{\atlasA} \sub \CcFM{M}{k}{\atlasB}$ if $\atlasB$ is subordinate to $\atlasA$.
\begin{lem}\label{lem:VGL_gewVF_zerstueckelterAtlas}
	Let $d \in \N^*$, $M$ a $d$-dimensional manifold,
	$\atlasA$ and $\atlasB$ atlases for $M$,
	$\GewFunk \sub \cl{\R}^M$ nonempty and $k\in \cl{\N}$.
	Then $\CcFM{M}{k}{\atlasA} = \CcFM{M}{k}{\atCap{\atlasA}{\atlasB}}$.
\end{lem}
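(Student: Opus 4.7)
The plan is to show that the two defining seminorm families on $\VecFields[k]{M}$ coincide: for every $f\in\GewFunk$ and every $\ell\in\N$ with $\ell\le k$,
\[
	\hnM{X}{f}{\ell}{\atlasA} = \hnM{X}{f}{\ell}{\atCap{\atlasA}{\atlasB}}
	\qquad\text{for all }X\in\VecFields[k]{M}.
\]
Once this is established the asserted equality of $\CcFM{M}{k}{\atlasA}$ and $\CcFM{M}{k}{\atCap{\atlasA}{\atlasB}}$ (as locally convex spaces) follows immediately, since both the underlying set of vector fields with all seminorms finite and the induced topology depend only on the family of seminorms.

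First I would verify two elementary preparatory observations. (i)~$\atCap{\atlasA}{\atlasB}$ is genuinely an atlas of $M$: for every $p\in M$ one can pick $\kappa\in\atlasA$ and $\phi\in\atlasB$ with $p\in\widetilde U_\kappa\cap\widetilde U_\phi$, and then $(\kappa,\phi)\in\atProd{\atlasA}{\atlasB}$, so the restriction $\rest{\kappa}{\widetilde U_\kappa\cap\widetilde U_\phi}$ is a chart around $p$. (ii)~If $\psi=\rest{\kappa}{V}$ for an open $V\sub\widetilde U_\kappa$, then by the very definition $\VectFLok{X}{\psi}=\dA{\psi}{}{}\circ X\circ\psi^{-1}$ equals the restriction $\rest{\VectFLok{X}{\kappa}}{\kappa(V)}$. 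This makes the comparison of the individual seminorms formal.

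Next I would run the two-sided inequality. For the easy direction, the domain $\kappa(\widetilde U_\kappa\cap\widetilde U_\phi)$ of a restricted chart sits inside $U_\kappa$, so by (ii)
\[
	\hn{\VectFLok{X}{\rest{\kappa}{\widetilde U_\kappa\cap\widetilde U_\phi}}}
	{f\circ(\rest{\kappa}{\widetilde U_\kappa\cap\widetilde U_\phi})^{-1}}{\ell}
	\le \hn{\VectFLok{X}{\kappa}}{f\circ\kappa^{-1}}{\ell},
\]
and taking the supremum over $(\kappa,\phi)\in\atProd{\atlasA}{\atlasB}$ yields $\hnM{X}{f}{\ell}{\atCap{\atlasA}{\atlasB}}\le\hnM{X}{f}{\ell}{\atlasA}$. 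For the reverse direction I use that $\atlasB$ covers $M$: for fixed $\kappa\in\atlasA$ we have $\widetilde U_\kappa=\bigcup_{\phi:(\kappa,\phi)\in\atProd{\atlasA}{\atlasB}}(\widetilde U_\kappa\cap\widetilde U_\phi)$, hence $U_\kappa=\bigcup_\phi\kappa(\widetilde U_\kappa\cap\widetilde U_\phi)$. Writing out $\hn{\VectFLok{X}{\kappa}}{f\circ\kappa^{-1}}{\ell}$ as a pointwise supremum over $U_\kappa$ and splitting it along this covering identifies the value as the supremum of the contributions of the restricted charts, giving $\hnM{X}{f}{\ell}{\atlasA}\le\hnM{X}{f}{\ell}{\atCap{\atlasA}{\atlasB}}$.

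There is no serious obstacle; the only thing to be careful about is observation~(ii), so that the individual seminorms really are compared against restrictions of the same derivative expression rather than against some different object. Once that is noted, the argument is a direct definition chase, uniform in $X$, $f$, and $\ell$, and therefore establishes the identity of the two spaces both as sets and as topological vector spaces.
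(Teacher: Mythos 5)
Your argument is correct and is essentially the paper's own proof: the paper simply observes that the two sets of values $\abs{f(x)}\,\Opnorm{\FAbl[\ell]{\VectFLok{X}{\kappa}}(\kappa(x))}$ indexed over $\atlasA$ and over $\atCap{\atlasA}{\atlasB}$ coincide, which is exactly your two-sided supremum comparison based on observation~(ii) and the fact that $\atlasB$ covers $M$.
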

\begin{proof}
	This is obvious since for $X \in \VecFields[k]{M}$, $f \in \GewFunk$ and $\ell \in \N$ with $\ell \leq k$,
	the sets
	\[
		\set{\abs{f(x)}\,\Opnorm{ \FAbl[\ell]{\VectFLok{X}{\kappa}}(\kappa(x)) } }{\kappa \in \atlasA, x \in U_\kappa}
	\]
	and
	\[
		\set{\abs{f(x)}\,\Opnorm{ \FAbl[\ell]{\VectFLok{X}{\kappa}}(\kappa(x)) } }%
		{(\kappa, \phi) \in \atProd{\atlasA}{\atlasB}, x \in U_\kappa \cap U_\phi}
	\]
	are the same.
\end{proof}

\begin{lem}\label{lem:Inklusion_gewVF_verkleinerter-Atlas}
	Let $d \in \N^*$, $M$ a $d$-dimensional manifold,
	$\atlasA = \sset{\kappa : \widetilde{U}_\kappa \to U_\kappa}$ an atlas for $M$,
	$\atlasB = \sset{\kappa : \widetilde{V}_\kappa \to V_\kappa}$ an atlas subordinate to $\atlasA$,
	$\GewFunk \sub \cl{\R}^M$ nonempty and $k\in \cl{\N}$.
	Then $\CcFM{M}{k}{\atlasA} \sub \CcFM{M}{k}{\atlasB}$,
	and the inclusion map is continuous linear.
\end{lem}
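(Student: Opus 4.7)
The plan is to exploit the subordination hypothesis directly: every chart of $\atlasB$ is a \emph{restriction} of some chart of $\atlasA$, so the corresponding localizations of a vector field are literal restrictions of one another, and the weighted seminorms $\hn{\cdot}{f\circ\kappa^{-1}}{\ell}$ can only get smaller when passing from $\atlasA$ to $\atlasB$.

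More concretely, fix $X \in \CcFM{M}{k}{\atlasA}$, a weight $f \in \GewFunk$, and $\ell \in \N$ with $\ell \leq k$. For each chart $\kappa : \widetilde{V}_\kappa \to V_\kappa$ in $\atlasB$, pick $\hat{\kappa} \in \atlasA$ with $\kappa = \rest[V_\kappa]{\hat{\kappa}}{\widetilde{V}_\kappa}$. Then $\widetilde{V}_\kappa \sub \widetilde{U}_{\hat{\kappa}}$ and $V_\kappa \sub U_{\hat{\kappa}}$, and directly from the definitions
\[
	\VectFLok{X}{\kappa} = \rest{\VectFLok{X}{\hat{\kappa}}}{V_\kappa}, \qquad
	f \circ \kappa^{-1} = \rest{(f \circ \hat{\kappa}^{-1})}{V_\kappa}.
\]
Since the weighted seminorm is a supremum over points of the domain, restricting the domain can only decrease it, so
\[
	\hn{\VectFLok{X}{\kappa}}{f \circ \kappa^{-1}}{\ell}
	\leq \hn{\VectFLok{X}{\hat{\kappa}}}{f \circ \hat{\kappa}^{-1}}{\ell}
	\leq \hnM{X}{f}{\ell}{\atlasA}.
\]
Taking the supremum over $\kappa \in \atlasB$ yields $\hnM{X}{f}{\ell}{\atlasB} \leq \hnM{X}{f}{\ell}{\atlasA} < \infty$.

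This establishes that $X \in \CcFM{M}{k}{\atlasB}$, so we have the set-theoretic inclusion; linearity of the inclusion map is immediate. The same estimate $\hnM{\cdot}{f}{\ell}{\atlasB} \leq \hnM{\cdot}{f}{\ell}{\atlasA}$ on each defining seminorm also delivers continuity of the inclusion as a map between locally convex spaces. There is no real obstacle here — the statement is essentially a book-keeping consequence of the definition of \enquote{subordinate atlas}, and the only thing one has to be careful about is keeping track of the two roles of $\kappa$ (as a chart of $\atlasB$ versus its extension $\hat{\kappa}$ in $\atlasA$) in the notation.
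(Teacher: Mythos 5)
Your proposal is correct and follows essentially the same route as the paper: since each chart of $\atlasB$ is a restriction of a chart of $\atlasA$, the localizations and localized weights restrict accordingly, and the supremum defining $\hn{\VectFLok{X}{\kappa}}{f\circ\kappa^{-1}}{\ell}$ is taken over a smaller set, giving $\hnM{\cdot}{f}{\ell}{\atlasB} \leq \hnM{\cdot}{f}{\ell}{\atlasA}$ and hence both the inclusion and its continuity. No gaps.
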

\begin{proof}
	Let $f \in \GewFunk$ and $\ell \in \N$ with $\ell \leq k$.
	Since for each $\kappa \in \atlasB$ there exists $\hat{\kappa} \in \atlasA$
	with $\kappa = \rest[V_\kappa]{\hat{\kappa}}{\widetilde{V}_\kappa}$,
	we have for $X \in \VecFields[k]{M}$ that
	\begin{multline*}
		\hn{\VectFLok{X}{\kappa}}{\weightLok{f}{\kappa}}{\ell}
		= \sup_{x \in V_\kappa} \abs{(f \circ \kappa^{-1})(x)}\,
			\Opnorm{\FAbl[\ell]{(\dA{\kappa}{}{} \circ X \circ \kappa^{-1})(x)}}
		\\
		\leq \sup_{x \in U_{\hat{\kappa}}} \abs{(f \circ \hat{\kappa}^{-1})(x)}\,
					\Opnorm{\FAbl[\ell]{(\dA{\hat{\kappa}}{}{} \circ X \circ \hat{\kappa}^{-1})(x)}}
		= \hn{\VectFLok{X}{\hat{\kappa}}}{\weightLok{f}{\hat{\kappa}}}{\ell}.
	\end{multline*}
	This shows the assertion.
\end{proof}
\paragraph{Weights with transition maps as multipliers}
We show the main result of this subsection. If for two atlases $\atlasA$, $\atlasB$,
the differentials of the transition maps from $\atlasB$ to $\atlasA$ are \enquote{simultaneous multipliers} for $\GewFunk$
(that is they satisfy \ref{cond:est_sim-multiplier_weights}),
then $\CcFM{M}{k}{\atlasB} \sub \CcFM{M}{k}{\atCap{\atlasA}{\atlasB}}$.
If additionally $\atlasB$ is subordinate to $\atlasA$, we have that
$\CcFM{M}{k}{\atlasB} = \CcFM{M}{k}{\atlasA}$.

\begin{prop}\label{prop:VGL_VF_Topos-Chart_Changes_Weights}
	Let $d \in \N^*$, $M$ a $d$-dimensional manifold,
	$\atlasA = \sset{\kappa : \widetilde{U}_\kappa \to U_\kappa}$ and
	$\atlasB = \sset{\phi : \widetilde{V}_\phi \to V_\phi}$ atlases for $M$ and $k\in \cl{\N}$.
	Further, let $\GewFunk \sub \cl{\R}^M$ such that \ref{cond:est_sim-multiplier_weights} is satisfied for
	$\WeightsAtl{\GewFunk}{ \atCap{\atlasB}{\atlasA} }$ and
	$
		\fami{\rest{\FAbl{(\kappa \circ \phi^{-1})}}{\phi(\widetilde{U}_\kappa \cap \widetilde{U}_\phi)} }{(\kappa, \phi)}{ \atProd{\atlasA}{\atlasB}}
	$
	and there exists $\omega \in \GewFunk$ with $\abs{\omega} \geq 1$.
	Then the following assertions hold:
	\begin{assertions}
		\item\label{ass1:SuPo_Diff_ChartChange_glatt}
		The map
		\[
			\tag{\ensuremath{\dagger}}
			\label{SuPo_mit_Diff_TransitionMap}
			\prod_{\substack{(\kappa, \phi) \in\\ \atProd{\atlasA}{\atlasB}}} \dA{(\kappa \circ \phi^{-1})}{}{}_*
			:
			\CFpro{\phi(\widetilde{U}_\kappa \cap \widetilde{U}_\phi)}{\R^d}{\WeightsAtl{\GewFunk}{\atCap{\atlasB}{\atlasA}}}{k}{(\kappa, \phi)}{\atProd{\atlasA}{\atlasB}}
			\to \CFpro{\phi(\widetilde{U}_\kappa \cap \widetilde{U}_\phi)}{\R^d}{\WeightsAtl{\GewFunk}{\atCap{\atlasB}{\atlasA}}}{k}{(\kappa, \phi)}{\atProd{\atlasA}{\atlasB}}
		\]
		is continuous linear.

		\item\label{ass1:kontrKompo_ChartChange_cont}
		The map
		\begin{equation}
			\tag{\ensuremath{\dagger\dagger}}
			\label{Kompo_mit_TransitionMap}
			\begin{aligned}
				\CFpro{\phi(\widetilde{U}_\kappa \cap \widetilde{U}_\phi)}{\R^d}{\WeightsAtl{\GewFunk}{\atCap{\atlasB}{\atlasA}}}{k}{(\kappa, \phi)}{\atProd{\atlasA}{\atlasB}}
				&\to \CFpro{\kappa(\widetilde{U}_\kappa \cap \widetilde{U}_\phi)}{\R^d}{\WeightsAtl{\GewFunk}{\atCap{\atlasA}{\atlasB}}}{k}{(\kappa, \phi)}{\atProd{\atlasA}{\atlasB}}
				\\
				\fami{\gamma_{\kappa, \phi}}{(\kappa, \phi)}{\atProd{\atlasA}{\atlasB}}
				&\mapsto
				\fami{\gamma_{\kappa, \phi} \circ \phi \circ \kappa^{-1} }{(\kappa, \phi)}{\atProd{\atlasA}{\atlasB}}
			\end{aligned}
		\end{equation}
		is defined and continuous.

		\item\label{ass1:Inklusion_gewAbb_MFKs}
		$\CcFM{M}{k}{\atlasB} \sub \CcFM{M}{k}{\atCap{\atlasA}{\atlasB}}$,
		and the inclusion map is continuous linear.
	\end{assertions}
\end{prop}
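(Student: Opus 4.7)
My plan is to prove the three assertions in order; (c) will follow by combining (a) and (b) with the transition formula \refer{lem:SP_mitRexp-Vergleich_lokal_global}(c). For (a), the map in question is a simultaneous superposition: with the smooth multipliers $M_{(\kappa,\phi)} \ndef \rest{\FAbl{(\kappa \circ \phi^{-1})}}{\phi(\widetilde{U}_\kappa \cap \widetilde{U}_\phi)}$ taking values in $\Lin{\R^d}{\R^d}$ and the bilinear evaluation $\mathrm{ev} : \Lin{\R^d}{\R^d} \times \R^d \to \R^d$ (of operator norm $1$), it has exactly the form covered by \refer{lem:simultane_mult-multiplier}. The multiplier condition \ref{cond:est_sim-multiplier_weights} on $\fami{M_{(\kappa,\phi)}}{(\kappa,\phi)}{\atProd{\atlasA}{\atlasB}}$ relative to $\WeightsAtl{\GewFunk}{\atCap{\atlasB}{\atlasA}}$ is precisely the stated hypothesis, so that lemma immediately yields continuity and linearity.

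For (b), write $h_{(\kappa,\phi)} \ndef \rest{(\phi \circ \kappa^{-1})}{\kappa(\widetilde{U}_\kappa \cap \widetilde{U}_\phi)}$. Under the change of variables $y = h_{(\kappa,\phi)}(x)$, the source weight value $\abs{f \circ \phi^{-1}(y)}$ matches the target value $\abs{f \circ \kappa^{-1}(x)}$, so continuity reduces to bounding $\FAbl[\ell]{(\gamma_{(\kappa,\phi)} \circ h_{(\kappa,\phi)})}(x)$ uniformly in $(\kappa,\phi)$. By Faà di Bruno, each contribution is a contraction of $\FAbl[j]{\gamma_{(\kappa,\phi)}}(y)$ (for $j \leq \ell$) against a product of factors $\FAbl[m]{h_{(\kappa,\phi)}}(x)$ with $m \leq \ell$. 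Iteratively differentiating the identity $h_{(\kappa,\phi)} \circ \rest{(\kappa \circ \phi^{-1})}{\phi(\widetilde{U}_\kappa \cap \widetilde{U}_\phi)} = \id{\phi(\widetilde{U}_\kappa \cap \widetilde{U}_\phi)}$ expresses each such $\FAbl[m]{h_{(\kappa,\phi)}}(x)$ as a universal polynomial in the derivatives $\FAbl[j]{(\kappa \circ \phi^{-1})}(y)$ ($j \leq m$) together with the inverse $\FAbl{(\kappa \circ \phi^{-1})}(y)^{-1}$. Iterating the multiplier hypothesis on the family of derivatives of $\kappa \circ \phi^{-1}$ then absorbs the polynomial factors, weighted by $\abs{f \circ \phi^{-1}(y)}$, into a bound $C_\ell \abs{g \circ \phi^{-1}(y)}$ for some $g \in \ExtWeights{\GewFunk}$ independent of $(\kappa,\phi)$. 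Taking suprema finishes (b).

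For (c), let $X \in \CcFM{M}{k}{\atlasB}$. The continuous linear restriction operation sends $\fami{\VectFLok{X}{\phi}}{\phi}{\atlasB}$ to $\fami{\rest{\VectFLok{X}{\phi}}{\phi(\widetilde{U}_\kappa \cap \widetilde{U}_\phi)}}{(\kappa,\phi)}{\atProd{\atlasA}{\atlasB}}$, which lies in the source space of (a). By \refer{lem:SP_mitRexp-Vergleich_lokal_global}(c), applying (a) followed by (b) produces exactly the family of localizations $\fami{\VectFLok{X}{\rest{\kappa}{\widetilde{U}_\kappa \cap \widetilde{U}_\phi}}}{(\kappa,\phi)}{\atProd{\atlasA}{\atlasB}}$. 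The assumption $\abs{\omega} \geq 1$ for some $\omega \in \GewFunk$ ensures every point of $M$ is seen by a weight, so \refer{lem:WeiVF_closed_image} identifies $\CcFM{M}{k}{\atCap{\atlasA}{\atlasB}}$ with a closed subspace of the target of (b) via $\embMcWatl{\atCap{\atlasA}{\atlasB}}$. The chain of continuous linear maps lands in this subspace, giving both $X \in \CcFM{M}{k}{\atCap{\atlasA}{\atlasB}}$ and continuity of the resulting inclusion. The main obstacle is (b): making the inverse function combinatorics fit into the multiplier framework uniformly in $(\kappa,\phi) \in \atProd{\atlasA}{\atlasB}$, in particular ensuring that the $\FAbl{(\kappa \circ \phi^{-1})}(y)^{-1}$ factors can be uniformly controlled well enough to be absorbed into $\ExtWeights{\GewFunk}$ rather than only into a constant-multiple of an existing weight.
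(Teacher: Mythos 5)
Your parts (a) and (c) are essentially sound. For (a), invoking \refer{lem:simultane_mult-multiplier} with the multiplier family $\fami{\rest{\FAbl{(\kappa \circ \phi^{-1})}}{\phi(\widetilde{U}_\kappa \cap \widetilde{U}_\phi)}}{(\kappa,\phi)}{\atProd{\atlasA}{\atlasB}}$ and the bilinear evaluation map is legitimate — the stated hypothesis is literally condition \ref{cond:est_sim-multiplier_weights} for that family — and it is actually more direct than the paper's route, which covers the source space by the open sets of functions taking values in $\Ball{0}{R}$ (this is where $\abs{\omega}\geq 1$ enters) and then applies the superposition result \refer{prop:simultane_SP_BCinf0_Produkt} via \refer{lem:vergleich_Bedingungen_simultane-multiplier_simu-Supo}. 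Part (c) — restrict, apply (a), apply (b), identify the result inside the closed image of $\embMcWatl{\atCap{\atlasA}{\atlasB}}$ via \refer{lem:WeiVF_closed_image} — is the paper's own argument.

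The genuine gap is in (b), and you flag it yourself without closing it. Your Faà di Bruno expansion of $\FAbl[\ell]{(\gamma_{\kappa,\phi}\circ\phi\circ\kappa^{-1})}$ produces products of higher derivatives of $\phi\circ\kappa^{-1}$, which you propose to rewrite via the inverse-function combinatorics in terms of derivatives of $\kappa\circ\phi^{-1}$ together with the matrix inverse $\FAbl{(\kappa\circ\phi^{-1})}(y)^{-1}$. Condition \ref{cond:est_sim-multiplier_weights} gives no control over that inverse (the operator norm of $A^{-1}$ is not bounded by that of $A$), and the \enquote{iteration of the multiplier hypothesis} you appeal to pairs each derivative factor with a fresh weight, whereas Faà di Bruno puts several such factors against a single weight; so the required uniform bound is not available from the hypothesis along your route. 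The paper proves (b) by induction on $k$ using \refer{prop:Zerlegungssatz_Familie}, which removes the combinatorics entirely: for $k=0$ the estimate $\hn{\gamma_{\kappa,\phi}\circ\phi\circ\kappa^{-1}}{f\circ\kappa^{-1}}{0}\leq\hn{\gamma_{\kappa,\phi}}{f\circ\phi^{-1}}{0}$ holds with constant $1$, simply because $f\circ\kappa^{-1}=(f\circ\phi^{-1})\circ(\phi\circ\kappa^{-1})$ — no multiplier is needed at all; and the inductive step writes $\FAbl{(\gamma_{\kappa,\phi}\circ\phi\circ\kappa^{-1})}=(\FAbl{\gamma_{\kappa,\phi}}\circ\phi\circ\kappa^{-1})\MaMu\FAbl{(\phi\circ\kappa^{-1})}$, applies the inductive hypothesis to the first factor and absorbs the single first-order multiplier with \refer{lem:simultane_mult-multiplier}. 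You should either switch to this induction or actually supply the missing uniform estimates; as written, (b) is an outline with an acknowledged hole, and (c) inherits that hole.
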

\begin{proof}
	\ref{ass1:SuPo_Diff_ChartChange_glatt}
	Since $\abs{\weightLok{\omega}{\phi}} \geq 1 = \max(\tfrac{1}{1}, 1)$ for each $\phi \in \atlasB$,
	we can apply \ref{est:1-0-norm_f-0-norm_spezielles-f} to see that for
	$\fami{\gamma_{\kappa, \phi}}{(\kappa, \phi)}{\atProd{\atlasA}{\atlasB}} \in \CFpro{\phi(\widetilde{U}_\kappa \cap \widetilde{U}_\phi)}{\R^d}{\WeightsAtl{\GewFunk}{\atCap{\atlasB}{\atlasA}}}{k}{(\kappa, \phi)}{\atProd{\atlasA}{\atlasB}}$,
	\[
		(\forall (\kappa, \phi) \in \atProd{\atlasA}{\atlasB})\,
		\noma{\gamma_{(\kappa, \phi)}}
		\leq \hn{ \gamma_{(\kappa, \phi)} }{ \weightLok{\omega}{\phi}}{0}
		\leq \hn{ \fami{\gamma_{(\kappa, \phi)}}{(\kappa, \phi)}{\atProd{\atlasA}{\atlasB}} }{ \weightAtl{\omega}{\atCap{\atlasB}{\atlasA} }}{0}.
	\]
	Hence
	\[
		\CFpro{\phi(\widetilde{U}_\kappa \cap \widetilde{U}_\phi)}{\R^d}{\WeightsAtl{\GewFunk}{\atCap{\atlasB}{\atlasA}}}{k}{(\kappa, \phi)}{\atProd{\atlasA}{\atlasB}}
		= \bigcup_{R>1}\CFoPRO{\phi(\widetilde{U}_\kappa \cap \widetilde{U}_\phi)}{ \Ball{0}{R} }{\WeightsAtl{\GewFunk}{\atCap{\atlasB}{\atlasA}}}{k}{(\kappa, \phi)}{\atProd{\atlasA}{\atlasB}},
	\]
	and the sets on the right hand side are open subsets of the space on the left hand side.
	Using our other assumption on $\GewFunk$ and \refer{lem:vergleich_Bedingungen_simultane-multiplier_simu-Supo},
	we can apply \refer{prop:simultane_SP_BCinf0_Produkt}
	to see that \ref{SuPo_mit_Diff_TransitionMap} is smooth on each set
	$\CFoPRO{\phi(\widetilde{U}_\kappa \cap \widetilde{U}_\phi)}{ \Ball{0}{R} }{\WeightsAtl{\GewFunk}{\atCap{\atlasB}{\atlasA}}}{k}{(\kappa, \phi)}{\atProd{\atlasA}{\atlasB}}$,
	and hence on $\CFpro{\phi(\widetilde{U}_\kappa \cap \widetilde{U}_\phi)}{\R^d}{\WeightsAtl{\GewFunk}{\atCap{\atlasB}{\atlasA}}}{k}{(\kappa, \phi)}{\atProd{\atlasA}{\atlasB}}$.
	It is obviously linear since each $\dA{(\kappa \circ \phi^{-1})}{}{}$ is so in its second argument.

	\ref{ass1:kontrKompo_ChartChange_cont}
	We prove this with an induction on $k$.

	$k = 0$:
	Let $f \in \GewFunk$. For all $(\kappa, \phi) \in \atProd{\atlasA}{\atlasB}$,
	we have $\weightLok{f}{\kappa} = \weightLok{f}{\phi} \circ \phi \circ \kappa^{-1}$.
	Hence for $\gamma_{\kappa, \phi} \in \CF{\phi(\widetilde{U}_\kappa \cap \widetilde{U}_\phi)}{\R^d}{\WeightsLok{\GewFunk}{\phi}}{k}$
	and $x \in \kappa(\widetilde{U}_\kappa \cap \widetilde{U}_\phi)$, we have that
	\[
		\abs{\weightLok{f}{\kappa}(x)} \, \norm{(\gamma_{\kappa, \phi} \circ \phi \circ \kappa^{-1})(x)}
		= \abs{ (\weightLok{f}{\phi} \circ \phi \circ \kappa^{-1})(x) } \, \norm{(\gamma_{\kappa, \phi} \circ \phi \circ \kappa^{-1})(x)}
		\leq \hn{\gamma_{\kappa, \phi}}{\weightLok{f}{\phi}}{0}
		.
	\]
	Since \ref{Kompo_mit_TransitionMap} is linear and $(\kappa, \phi) \in \atProd{\atlasA}{\atlasB}$
	was arbitrary, we see with this estimate that \ref{Kompo_mit_TransitionMap} is defined and continuous.

	$k \to k + 1$:
	We use \refer{prop:Zerlegungssatz_Familie}. We calculate that for $(\kappa, \phi) \in \atProd{\atlasA}{\atlasB}$,
	\[
		\FAbl{ (\gamma_{\kappa, \phi} \circ \phi \circ \kappa^{-1}) }
		= \FAbl{ \gamma_{\kappa, \phi} } \circ \phi \circ \kappa^{-1} \MaMu \FAbl{(\phi \circ \kappa^{-1})}
		.
	\]
	We see using the inductive hypothesis that
	\[
		\fami{\FAbl{ \gamma_{\kappa, \phi} } \circ \phi \circ \kappa^{-1}}{(\kappa, \phi)}{\atProd{\atlasA}{\atlasB}}
		\in \CFpro{\kappa(\widetilde{U}_\kappa \cap \widetilde{U}_\phi)}{\Lin{\R^d}{\R^d}}{\WeightsAtl{\GewFunk}{\atCap{\atlasA}{\atlasB}}}{k}{(\kappa, \phi)}{\atProd{\atlasA}{\atlasB}},
	\]
	and that the corresponding map is continuous.
	Finally, we get the assertion using \refer{lem:simultane_mult-multiplier}.

	\ref{ass1:Inklusion_gewAbb_MFKs}
	Let $(\kappa, \phi) \in \atProd{\atlasA}{\atlasB}$.
	On $\kappa(\widetilde{U}_\kappa \cap \widetilde{U}_\phi)$, we have the identity
	\[
		\VectFLok{X}{\kappa}
		= \dA{\kappa}{}{} \circ X \circ \kappa^{-1}
		= \pi_2 \circ \Tang{(\kappa \circ \phi^{-1})} \circ \Tang{\phi} \circ X \circ \phi^{-1} \circ \phi \circ \kappa^{-1}
		= (\phi \circ \kappa^{-1})^*(\dA{(\kappa \circ \phi^{-1})}{}{}_*(\VectFLok{X}{\phi}))
		.
	\]
	Since $\kappa$ and $\phi$ were arbitrary, we can use that
	the maps \ref{SuPo_mit_Diff_TransitionMap} and \ref{Kompo_mit_TransitionMap} are continuous linear
	to derive estimates which ensure that
	$\CcFM{M}{k}{\atCap{\atlasB}{\atlasA}} \sub \CcFM{M}{k}{\atCap{\atlasA}{\atlasB}}$,
	and that the inclusion map is continuous.
	Since $\atCap{\atlasB}{\atlasA}$ is subordinate to $\atlasB$,
	we derive the assertion using \refer{lem:Inklusion_gewVF_verkleinerter-Atlas}.
\end{proof}

\begin{cor}\label{cor:gewVF-verschiedeneAtlanten-Chart_Changes_Weights}
	Let the data be as in \refer{prop:VGL_VF_Topos-Chart_Changes_Weights}, and additionally assume that $\atlasB$ is subordinate to $\atlasA$.
	Then $\CcFM{M}{k}{\atlasB} = \CcFM{M}{k}{\atlasA}$ as topological vector space.
\end{cor}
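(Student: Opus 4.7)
The plan is simply to chain together the three results already at hand. From \refer{prop:VGL_VF_Topos-Chart_Changes_Weights}\ref{ass1:Inklusion_gewAbb_MFKs}, which is available because its hypotheses are exactly those carried over from the proposition, one obtains a continuous linear inclusion
$$
\CcFM{M}{k}{\atlasB} \hookrightarrow \CcFM{M}{k}{\atCap{\atlasA}{\atlasB}},
$$
and by \refer{lem:VGL_gewVF_zerstueckelterAtlas} the right-hand side coincides with $\CcFM{M}{k}{\atlasA}$ as a topological vector space. Composing yields the continuous linear inclusion $\CcFM{M}{k}{\atlasB} \hookrightarrow \CcFM{M}{k}{\atlasA}$. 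For the reverse direction, since $\atlasB$ is subordinate to $\atlasA$ by the additional hypothesis, \refer{lem:Inklusion_gewVF_verkleinerter-Atlas} provides a continuous linear inclusion $\CcFM{M}{k}{\atlasA} \hookrightarrow \CcFM{M}{k}{\atlasB}$.

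The composition of these two arrows is, set-theoretically, the identity on $\VecFields[k]{M}$ restricted to either space, so the two underlying sets of weighted vector fields coincide. Because each inclusion is continuous, the two locally convex topologies agree as well, giving the claimed equality as topological vector spaces. There is no substantive obstacle: all the analytic work — in particular, invoking the multiplier assumption \ref{cond:est_sim-multiplier_weights} to push weighted seminorm estimates through the transition maps — has already been done in \refer{prop:VGL_VF_Topos-Chart_Changes_Weights}, and the present argument is pure bookkeeping to track which atlas indexes which inclusion.
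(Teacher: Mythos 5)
Your proposal is correct and follows essentially the same route as the paper: the inclusion $\CcFM{M}{k}{\atlasB} \sub \CcFM{M}{k}{\atCap{\atlasA}{\atlasB}} = \CcFM{M}{k}{\atlasA}$ via \refer{prop:VGL_VF_Topos-Chart_Changes_Weights}\ref{ass1:Inklusion_gewAbb_MFKs} and \refer{lem:VGL_gewVF_zerstueckelterAtlas}, combined with the reverse inclusion from \refer{lem:Inklusion_gewVF_verkleinerter-Atlas} using subordination. The paper's own proof is exactly this two-line chaining of the same three results.
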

\begin{proof}
	We know from \refer{lem:Inklusion_gewVF_verkleinerter-Atlas} that $\CcFM{M}{k}{\atlasA} \sub \CcFM{M}{k}{\atlasB}$,
	and from \refer{prop:VGL_VF_Topos-Chart_Changes_Weights} and \refer{lem:VGL_gewVF_zerstueckelterAtlas} that $\CcFM{M}{k}{\atlasB} \sub \CcFM{M}{k}{\atlasA}$.
\end{proof}

\subsection{Simultaneous composition, inversion and superposition with Riemannian exponential map and logarithm}

We study simultaneous composition and inversion
(see \refer{prop:Simultane_Koor-Kompo_diffbar} and \refer{prop:Simultane_Inv-Kompo_glatt}, respectively)
on families of functions that, roughly speaking, arise as the simultaneous application of the superposition with the Riemannian exponential function;
and the application of simultaneous superposition with the logarithm after these operations.
\paragraph{Rephrasing some previous results}
We apply some results to the special case of functions that are defined on the disjoint union
of chart domains for a manifold.
We start with the simultaneous superposition with (slightly modified) Riemannian exponential maps and logarithms.
\begin{defi}
	Let $d \in \N^*$, $U \sub \R^d$ open, $g$ a Riemannian metric on $U$ and
	$V$ an open, nonempty, relatively compact set such that $\cl{V} \sub U$.
	\begin{assertions}
		\item
		Let $\delta \in ]0,  \grenzExp[g]{V}{U}[$.
		We set
		\[
			\ExpMinusLok{V}{\delta}{g} : V \times \Ball{0}{\delta} \to U
			: (x, y) \mapsto \Rexp{g}(x, y) - x
			.
		\]

		\item
		Let $\delta \in ]0,  \grenzLog[g]{V}{U}[$.
		We set
		\[
			\LogPlusVR{V}{\delta}{g} : V \times \Ball{0}{\delta} \to \R^d
			: (x, y) \mapsto \RlogVR{g} (x, x + y)
		\]
	\end{assertions}
\end{defi}

\begin{lem}
	Let $d \in \N^*$, $M$ a $d$-dimensional manifold, $g$ a Riemannian metric on $M$,
	$\atlasA = \sset{\kappa : \widetilde{U}_\kappa \to U_\kappa} $ an atlas for $M$,
	$\GewFunk \sub \cl{\R}^M$ nonempty and $k\in \cl{\N}$.
	Further, for each $\kappa \in \atlasA$ let $V_\kappa$ be an open,
	nonempty, relatively compact set such that $\cl{V_\kappa} \sub U_\kappa$,
	$\atlasB \ndef \set{\rest[V_\kappa]{\kappa}{\kappa^{-1}(V_\kappa)}}{\kappa \in \atlasA}$ is an atlas for $M$,
	and $\delta_\kappa > 0$.
	Further, we assume that $\WeightsAtl{\GewFunk}{\atlasB}$ contains
	an adjusting weight $\omega$ for $\famiAtl{\delta_\kappa}{\atlasA}$.
	\begin{assertions}
		\item
		Assume that $\delta_\kappa < \grenzExp[\RMetLok{g}{\kappa}]{V_\kappa}{U_\kappa}$,
		and that $\WeightsAtl{\GewFunk}{\atlasB}$ satisfies \ref{cond:est_SP-Abb_weights},
		where $I = \atlasA$ and
		$\beta_\kappa = \ExpMinusLok{V_\kappa}{\delta_\kappa}{\RMetLok{g}{\kappa}}$
		for $\kappa \in \atlasA$.
		Then the map
		\[
			\ExpIdCWprod{V}{\delta}
			\ndef \prod_{\kappa \in \atlasA} \ExpIdCWfak{V_\kappa}{\delta_\kappa}
			: \left\lbrace
			\begin{aligned}
				\CcFfoPROatK{V}{\Ball{0}{\delta_\kappa}}{\atlasB}{k}{\omega}{\atlasA}
				&\to \CcFproAtK{V}{\R^d}{\atlasB}{k}{\atlasA}
				\\
				\phi &\mapsto \famiAtl{\Rexp{\RMetLok{g}{\kappa}} \circ (\id{V_\kappa}, \phi_\kappa) - \id{V_\kappa} }{\atlasA}
			\end{aligned}
			\right.
		\]
		is defined and smooth.

		\item
		Assume that $\delta_\kappa < \grenzLog[\RMetLok{g}{\kappa}]{V_\kappa}{U_\kappa}$,
		and that $\WeightsAtl{\GewFunk}{\atlasB}$ satisfies \ref{cond:est_SP-Abb_weights},
		where $I = \atlasA$ and
		$\beta_\kappa = \LogPlusVR{V_\kappa}{\delta_\kappa}{\RMetLok{g}{\kappa}}$
		for $\kappa \in \atlasA$.
		Then the map
		\[
			\LogIdCWprod{V}{\delta}
			\ndef \prod_{\kappa \in \atlasA} \LogIdCWfak{V_\kappa}{\delta_\kappa}
			:  \left\lbrace
			\begin{aligned}
				\CcFfoPROatK{V}{\Ball{0}{\delta_\kappa}}{\atlasB}{k}{\omega}{\atlasA}
				&\to \CcFproAtK{V}{\R^d}{\atlasB}{k}{\atlasA}
				\\
				\phi &\mapsto \famiAtl{ \RlogVR{\RMetLok{g}{\kappa}}\circ (\id{V_\kappa}, \phi_\kappa + \id{V_\kappa}) }{\atlasA}
			\end{aligned}
			\right.
		\]
		is defined and smooth.
	\end{assertions}
\end{lem}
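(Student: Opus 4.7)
The plan is to deduce both assertions by essentially direct application of \refer{prop:simultane_SP_BCinf0_Produkt}, taking the index set to be $I = \atlasA$, the base domains in that proposition to be $V_\kappa$ (open subsets of $\R^d$), the target balls to be $\Ball{0}{\delta_\kappa}$ (each open and star-shaped with center $0$), and the codomain spaces to be $\R^d$. In both parts I would set $\beta_\kappa$ equal to the corresponding local map ($\ExpMinusLok{V_\kappa}{\delta_\kappa}{\RMetLok{g}{\kappa}}$ in part (a), $\LogPlusVR{V_\kappa}{\delta_\kappa}{\RMetLok{g}{\kappa}}$ in part (b)); then the map $\famiAtl{\phi_\kappa}{\atlasA}\mapsto \famiAtl{\beta_\kappa\circ(\id{V_\kappa},\phi_\kappa)}{\atlasA}$ produced by \refer{prop:simultane_SP_BCinf0_Produkt} is, chart by chart, exactly the asserted $\ExpIdCWprod{V}{\delta}$ resp.\ $\LogIdCWprod{V}{\delta}$.

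For part (a), the work is to verify the four hypotheses of \refer{prop:simultane_SP_BCinf0_Produkt}. Smoothness of $\beta_\kappa$ on $V_\kappa \times \Ball{0}{\delta_\kappa}$ follows from smoothness of $\Rexp{\RMetLok{g}{\kappa}}$ on $\domExpMax{\RMetLok{g}{\kappa}}$ together with the inclusion $V_\kappa \times \Ball{0}{\delta_\kappa} \sub \domExpMax{\RMetLok{g}{\kappa}}$, which is precisely the meaning of $\delta_\kappa < \grenzExp[\RMetLok{g}{\kappa}]{V_\kappa}{U_\kappa}$. The zero condition $\beta_\kappa(V_\kappa \times \sset{0}) = \sset{0}$ is immediate from $\Rexp{\RMetLok{g}{\kappa}}(x,0) = x$. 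The estimate \ref{cond:est_SP-Abb_weights} on $\WeightsAtl{\GewFunk}{\atlasB}$ and the presence of the adjusting weight $\omega$ are explicitly assumed; note here that an adjusting weight for the radii $\famiAtl{\delta_\kappa}{\atlasA}$ is the same as one for the balanced balls $\famiAtl{\Ball{0}{\delta_\kappa}}{\atlasA}$, since $\dist{\sset{0}}{\partial \Ball{0}{\delta_\kappa}} = \delta_\kappa$. Applying \refer{prop:simultane_SP_BCinf0_Produkt} then yields that $\ExpIdCWprod{V}{\delta}$ is defined and smooth as a map $\CcFfoPROatK{V}{\Ball{0}{\delta_\kappa}}{\atlasB}{k}{\omega}{\atlasA} \to \CcFproAtK{V}{\R^d}{\atlasB}{k}{\atlasA}$.

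For part (b), the strategy is identical with $\beta_\kappa = \LogPlusVR{V_\kappa}{\delta_\kappa}{\RMetLok{g}{\kappa}}$. Smoothness on $V_\kappa \times \Ball{0}{\delta_\kappa}$ follows from smoothness of $\RlogVR{\RMetLok{g}{\kappa}}$ on $\domLogMax{\RMetLok{g}{\kappa}}$ via the containment $\domLogProd{V_\kappa}{\delta_\kappa} \sub \domLogMax{\RMetLok{g}{\kappa}}$, which is encoded in the hypothesis $\delta_\kappa < \grenzLog[\RMetLok{g}{\kappa}]{V_\kappa}{U_\kappa}$ (by definition of $\grenzLog$ via \refer{lem:Est_Dom_Rlog}). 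The zero condition becomes $\beta_\kappa(x,0) = \RlogVR{\RMetLok{g}{\kappa}}(x,x) = 0$, and the remaining two hypotheses are again among the standing assumptions. A second invocation of \refer{prop:simultane_SP_BCinf0_Produkt} then gives that $\LogIdCWprod{V}{\delta}$ is defined and smooth.

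I do not foresee any real obstacle here: the statement has been arranged so that each of the four hypotheses of \refer{prop:simultane_SP_BCinf0_Produkt} is either part of the assumptions (weights estimate, adjusting weight) or a basic geometric fact about the local exponential and logarithm maps on $\R^d$ (smoothness, vanishing on the zero section, and inclusion of the relevant product or diagonal neighbourhood in the exponential/logarithmic domain). The only bookkeeping step is to recognise that the composition $\beta_\kappa \circ (\id{V_\kappa}, \phi_\kappa)$ coincides pointwise with the $\kappa$-th component of the map asserted to be smooth.
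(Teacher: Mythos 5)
Your proposal is correct and follows exactly the route the paper takes: the paper's own proof simply observes that each $\beta_\kappa$ maps $V_\kappa \times \sset{0}$ to $\sset{0}$ and then cites \refer{prop:simultane_SP_BCinf0_Produkt}, with the remaining hypotheses being immediate from the standing assumptions. Your write-up just spells out those hypothesis checks in more detail.
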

\begin{proof}
	In both cases, we see that $\beta_\kappa$ maps $V_\kappa \times \sset{0}$ to $\sset{0}$,
	for each $\kappa \in \atlasA$.
	Hence the assertion follows from \refer{prop:simultane_SP_BCinf0_Produkt}.
\end{proof}
We turn to composition and inversion.
\begin{lem}
	Let $d \in \N^*$, $M$ a $d$-dimensional manifold,
	$\atlasA = \sset{\kappa : \widetilde{U}_\kappa \to U_\kappa} $ an atlas for $M$,
	$\GewFunk \sub \cl{\R}^M$ nonempty and $k, \ell \in \cl{\N}$.
	Further, let $r > 0$ and for each $\kappa \in \atlasA$
	let $W_\kappa, V_\kappa \sub U_\kappa$ be open nonempty sets such that $W_\kappa + \Ball{0}{r} \sub V_\kappa$
	and $\famiAtl{\kappa^{-1}(W_\kappa)}{\atlasA}$ is a cover of $M$.
	Then the map
	\begin{equation*}
		\compIdDiffKLWeights{\R^d}{k}{\ell}{\WeightsAtl{\GewFunk}{\atlasB}}
		\ndef \prod_{\kappa \in \atlasA} \compIdDiffKLWeights{\R^d}{k}{\ell}{\WeightsLok{\GewFunk}{\kappa}} : \left\lbrace
		\begin{aligned}
			\CcFproAtK{V}{\R^d}{\atlasB}{k + \ell + 1}{\atlasA} \times \CcFoPROatK{W}{\Ball{0}{r}}{\atlasB}{k}{\atlasA}
			&\to \CcFproAtK{W}{\R^d}{\atlasB}{k}{\atlasA}
			\\
			(\gamma, \eta) &\mapsto \famiAtl{\gamma_\kappa \circ (\eta_\kappa + \id{W_\kappa})}{\atlasA}
		\end{aligned}
		\right.
	\end{equation*}
	is defined and $\ConDiff{}{}{\ell}$,
	where $\atlasB \ndef \set{\rest[V_\kappa]{\kappa}{\kappa^{-1}(V_\kappa)} }{\kappa \in \atlasA}$.
\end{lem}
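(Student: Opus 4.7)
My plan is to reduce this claim directly to Proposition~\ref{prop:Simultane_Koor-Kompo_diffbar}, which is formulated exactly for simultaneous composition on restricted products of weighted function spaces; the present lemma is essentially the pointwise atlas-indexed instance.

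First, I would set up the translation between the two statements. Take the index set to be $I = \atlasA$, and, using primes to mark the roles played in the proposition, set $U'_\kappa \ndef W_\kappa$, $V'_\kappa \ndef \Ball{0}{r}$, $W'_\kappa \ndef V_\kappa$, and $Y'_\kappa \ndef \R^d$. The lemma's hypothesis $W_\kappa + \Ball{0}{r} \sub V_\kappa$ is then precisely $U'_\kappa + V'_\kappa \sub W'_\kappa$, and the constant family $V'_\kappa = \Ball{0}{r}$ is balanced. The weight set $\WeightsAtl{\GewFunk}{\atlasB}$ lives on $\disjointUkM{V_\kappa} = \disjointUkM{W'_\kappa}$, as the proposition requires.

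Second, I would verify the adjusting-weight hypothesis. Because the target family is the constant one with $\dist{0}{\partial \Ball{0}{r}} = r$, an adjusting weight for $(V'_\kappa)_\kappa$ is any function $\omega$ satisfying $\sup_{x \in V_\kappa}\abs{\omega_\kappa(x)} < \infty$ and $\inf_{x \in V_\kappa}\abs{\omega_\kappa(x)} \geq \max(\tfrac{1}{r},1)$ for every $\kappa$. Since the whole discussion of this section tacitly operates under the standing assumption $1_M \in \GewFunk$, a suitably rescaled constant function belongs to $\WeightsAtl{\GewFunk}{\atlasB}$ (or, at worst, to its maximal extension $\ExtWeights{\WeightsAtl{\GewFunk}{\atlasB}}$). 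Replacing $r$ by $\min(r,1)$ does not weaken the inclusion $W_\kappa + \Ball{0}{r} \sub V_\kappa$, and aligns the adjusting weight with the normalization of $\CcFoPROatK{W}{\Ball{0}{r}}{\atlasB}{k}{\atlasA}$, whose definition uses the constant $1$ in place of $\omega$.

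With those identifications made, Proposition~\ref{prop:Simultane_Koor-Kompo_diffbar} supplies the simultaneous composition map $\prod_{\kappa \in \atlasA} \compIdDiffKLWeights{\R^d}{k}{\ell}{\WeightsLok{\GewFunk}{\kappa}}$ as a well-defined $\ConDiff{}{}{\ell}$-map from $\CcFproAtK{V}{\R^d}{\atlasB}{k+\ell+1}{\atlasA} \times \CcFoPROatK{W}{\Ball{0}{r}}{\atlasB}{k}{\atlasA}$ into $\CcFproAtK{W}{\R^d}{\atlasB}{k}{\atlasA}$; unfolding the product yields precisely the formula $(\gamma,\eta) \mapsto \famiAtl{\gamma_\kappa \circ (\eta_\kappa + \id{W_\kappa})}{\atlasA}$ in the lemma. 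The only genuine obstacle is the bookkeeping described in the previous paragraph — matching the constant $1$ built into $\CcFoPROatK{}{}{}{}{}$ with a bona fide adjusting weight inside $\WeightsAtl{\GewFunk}{\atlasB}$ — and this is resolved by the harmless rescaling of $r$. Everything else is a direct quotation of the already-proved proposition.
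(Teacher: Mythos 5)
Your proposal is correct and follows essentially the same route as the paper: the paper's proof likewise consists of a direct application of \refer{prop:Simultane_Koor-Kompo_diffbar}, with the single observation that $1_{\disjointUkM{V_\kappa}}$ (multiplied by $\frac{1}{r}$ if necessary) serves as the required adjusting weight for the constant family $\famiAtl{\Ball{0}{r}}{\atlasA}$. Your additional bookkeeping about the identifications $U'_\kappa = W_\kappa$, $V'_\kappa = \Ball{0}{r}$, $W'_\kappa = V_\kappa$ and the rescaling of the constant weight is exactly what the paper leaves implicit.
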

\begin{proof}
	This is a direct consequence of \refer{prop:Simultane_Koor-Kompo_diffbar}.
	Note that $1_{\disjointUkM{V_\kappa}}$ (eventually multiplied with $\frac{1}{r}$)
	is an adjusting weight for $\famiAtl{\Ball{0}{r}}{\atlasA}$.
\end{proof}

\begin{lem}
	Let $d \in \N^*$, $M$ a $d$-dimensional manifold,
	$\atlasA = \sset{\kappa : \widetilde{U}_\kappa \to U_\kappa} $ an atlas for $M$
	such that each $U_\kappa$ is convex,
	and $\GewFunk \sub \cl{\R}^M$ containing $1_M$.
	Further, let $r > 0$ and for each $\kappa \in \atlasA$
	let $V_\kappa \sub U_\kappa$ be an open nonempty set such that
	$V_\kappa + \Ball{0}{r} \sub U_\kappa$
	and $\famiAtl{\kappa^{-1}(V_\kappa)}{\atlasA}$ is a cover of $M$.
	Then for each $\tau \in ]0, 1[$, the map
	\begin{equation*}
		\InvIdWeights{V}{\WeightsAtl{\GewFunk}{\atlasA}}
		\ndef
		\prod_{\kappa \in \atlasA}
		\InvIdWeights{V_\kappa}{\WeightsLok{\GewFunk}{\kappa}}
		 :
		\mathcal{D}^\tau \to \CcFproAtK{V}{\R^d}{\atlasA}{\infty}{\atlasA}
		:
		\phi \mapsto \famiAtl{ \rest{(\phi_\kappa + \id{U_\kappa})^{-1}}{V_\kappa} - \id{V_\kappa} }{\atlasA}
	\end{equation*}
	is defined and smooth, where
	\[
		\mathcal{D}^\tau \ndef
		\left\set{\phi \in \CcFproAtK{U}{\R^d}{\atlasA}{\infty}{\atlasA} }{%
			\hn{\phi}{1_{\disjointUkM{U_\kappa}}}{1} < \tau
			\text{ and }
			\hn{\phi}{1_{\disjointUkM{U_\kappa}}}{0}
			< \tfrac{r}{2} (1 - \tau)
		\right}.
	\]
\end{lem}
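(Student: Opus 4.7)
The plan is to apply \refer{prop:Simultane_Inv-Kompo_glatt} directly with the index set $I \ndef \atlasA$ and the identifications $X_\kappa \ndef \R^d$ (a Banach space), $U_i \leftrightarrow U_\kappa$, $\widetilde U_i \leftrightarrow V_\kappa$. Under this dictionary, the restricted product $\CcFproAtK{U}{\R^d}{\atlasA}{\infty}{\atlasA}$ is by definition the space $\CcFproI{U_\kappa}{X_\kappa}{\infty}$ appearing in that proposition, and similarly for the target space over $V_\kappa$.

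First I would check that each hypothesis of \refer{prop:Simultane_Inv-Kompo_glatt} is met: each $U_\kappa$ is a nonempty open convex subset of the Banach space $\R^d$ by assumption; the inclusion $V_\kappa + \Ball{0}{r} \sub U_\kappa$ is assumed to hold uniformly in $\kappa$ with the same $r > 0$; finally $1_M \in \GewFunk$ implies $\weightAtl{1_M}{\atlasA} = \disjointUkM{1_{U_\kappa}} = 1_{\disjointUkM{U_\kappa}}$, so $1_{\disjointUkM{U_\kappa}} \in \WeightsAtl{\GewFunk}{\atlasA}$. The set $\mathcal{D}^\tau$ defined in the statement is verbatim the set $\mathcal D^\tau$ of \refer{prop:Simultane_Inv-Kompo_glatt} for this data.

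With the hypotheses verified, \refer{prop:Simultane_Inv-Kompo_glatt} produces exactly the map
\[
	\prod_{\kappa \in \atlasA} \InvIdWeights{V_\kappa}{\WeightsLok{\GewFunk}{\kappa}}
	: \mathcal D^\tau \to \CcFproAtK{V}{\R^d}{\atlasA}{\infty}{\atlasA}
\]
and asserts that it is defined and smooth. This is by definition $\InvIdWeights{V}{\WeightsAtl{\GewFunk}{\atlasA}}$, so the conclusion follows.

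There is no real obstacle here beyond bookkeeping: the statement is the word-for-word specialization of \refer{prop:Simultane_Inv-Kompo_glatt} to the situation where the abstract family $\famiI{U_i}$ of convex open sets in Banach spaces is taken to be the family of chart images of the atlas $\atlasA$. The cover condition $\famiAtl{\kappa^{-1}(V_\kappa)}{\atlasA}$ of $M$ is not used in the inversion assertion itself; it is listed for consistency with the geometric setting, since in applications it is what ensures that inversion on the localized pieces carries geometric meaning for diffeomorphisms on $M$.
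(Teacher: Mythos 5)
Your proposal is correct and matches the paper's own argument: the paper proves this lemma in one line as a direct consequence of \refer{prop:Simultane_Inv-Kompo_glatt}, exactly the specialization you carry out (with $I = \atlasA$, $X_\kappa = \R^d$, $\widetilde U_\kappa = V_\kappa$, and $1_{\disjointUkM{U_\kappa}} \in \WeightsAtl{\GewFunk}{\atlasA}$ coming from $1_M \in \GewFunk$). Your remark that the covering condition is not needed for this particular assertion is also accurate.
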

\begin{proof}
	This is a direct consequence of \refer{prop:Simultane_Inv-Kompo_glatt}.
\end{proof}

\paragraph{Composing the operations}
We compose the maps that were introduced in this subsection.
The main difficulty is keeping track of whether the simultaneous operations can be applied.
%&
\begin{lem}\label{lem:Composition_and_Inversion_smooth-patched_version}
	Let $d \in \N^*$, $M$ a $d$-dimensional manifold, $g$ a Riemannian metric on $M$, $r > 0$ and
	$\atlasA = \sset{\kappa : \widetilde{U}_\kappa \to U_\kappa} $ an atlas for $M$.
	For each $\kappa \in \atlasA$, let $V_\kappa, W_\kappa \sub U_\kappa$
	be open, nonempty, relatively compact sets with $W_\kappa + \Ball{0}{r} \sub V_\kappa$
	such that $\cl{V_\kappa} \sub U_\kappa$, $V_\kappa$ is convex
	and $\famiAtl{\kappa^{-1}(W_\kappa)}{\atlasA}$ is a cover of $M$.
	We set $\atlasB \ndef \set{\rest[V_\kappa]{\kappa}{\kappa^{-1}(V_\kappa)}}{\kappa \in \atlasA}$.

	For each $\kappa \in \atlasA$, let $\delta_\kappa^E \in]0, \grenzExp{V_\kappa}{U_\kappa}[$
	and $\delta^L_\kappa \in]0, \grenzLog{W_\kappa}{U_\kappa}[$.
	Let $\GewFunk \sub \cl{\R}^M$ such that $\WeightsAtl{\GewFunk}{\atlasB}$ contains
	an adjusting weight $\omega^E$ for
	\[
		\tag{\ensuremath{\dagger}}
		\label{Compo_patched_adjust_exp}
		\left\famiAtl{\min(\delta^E_\kappa,
			\frac{1}{1 + \BndSndAblRex[\RMetLok{g}{\kappa}]{V_\kappa}{\delta^E_\kappa}},
			\frac{1}{\BndFstAblRex[\RMetLok{g}{\kappa}]{V_\kappa}{\delta^E_\kappa}}
			)\right}{\atlasA}
	\]
	and $\omega^L$ that is adjusting for $\famiAtl{\delta^L_\kappa}{\atlasA}$,
	and satisfies
	\begin{equation}
		\label{est:adjustWeights_exp-log}
		(\forall \kappa \in \atlasA )\,
		\abs{\omega^L_\kappa} \leq
		\frac{1}{\BndFstAblRex[\RMetLok{g}{\kappa}]{V_\kappa}{\delta^E_\kappa} \BndFstAblRlog[\RMetLok{g}{\kappa}]{W_\kappa}{\delta^L_\kappa}}
		\abs{\omega^E_\kappa}
		.
	\end{equation}
	Additionally, assume that $\WeightsAtl{\GewFunk}{\atlasB}$ satisfies \ref{cond:est_SP-Abb_weights}
	for the families
	$\famiAtl{ \ExpMinusLok{V_\kappa}{\delta^E_\kappa}{\RMetLok{g}{\kappa}} }{\atlasA}$
	and $\famiAtl{ \LogPlusVR{W_\kappa}{\delta^L_\kappa}{\RMetLok{g}{\kappa}} }{\atlasA}$,
	respectively.
	\begin{assertions}
		\item\label{ass:patched_Compo_Rexplog}
		Then the map
		\[
			\mathbf{C}_E^L
			:
			D_1 \times D_2 \to R
			: (\gamma, \eta) \mapsto \LogIdCWprod[g]{W}{\delta^L}
			( \compIdWeights{\R^d}{\WeightsAtl{\GewFunk}{\atlasB}}
				(\ExpIdCWprod{V}{\delta^E}(\gamma), \ExpIdCWprod{W}{\delta^E}(\eta)
				) + \ExpIdCWprod{W}{\delta^E}(\eta))
		\]
		is defined and smooth. Here
		\[
			D_1 \ndef \set{\gamma \in \CcFproAtK{V}{\R^d}{\atlasB}{\infty}{\atlasA}}%
			{\hn{\gamma}{\omega^E}{0} < \tfrac{1}{2}
			\text{ and }
			\hn{\gamma}{1_{\disjointUkM{V_\kappa}}}{1} < \tfrac{1}{2}
			},
		\]
		\[
			D_2 \ndef \set{\eta \in \CcFproAtK{W}{\R^d }{\atlasB}{\infty}{\atlasA} }%
			{\hn{\eta}{\omega^E}{0} < \min(\tfrac{1}{4}, r) }
			,
		\]
		and
		\[
			R \ndef \set{\phi \in \CcFproAtK{W}{\R^d }{\atlasB}{\infty}{\atlasA}}{\hn{\phi}{\omega^L}{0} < 1}.
		\]
		Moreover, we have
		\begin{equation}
			\label{est:log0-normCompo}
			\hn{\mathbf{C}_E^L(\gamma, \eta)}{\omega^L}{0}
			\leq (1 + \hn{\gamma}{\omega^E}{0} + \hn{\gamma}{1_{\disjointUkM{V_\kappa}}}{1}) \hn{\eta}{\omega^E}{0} + \hn{\gamma}{\omega^E}{0}.
		\end{equation}
		For $X, Y \in \VecFields{M}$ such that $\embMcWatl{\atlasB}(X) \in D_1$ and $\embMcWatl{\atlasB}(Y) \in D_2$,
		we have for $\kappa \in \atlasA$ that
		\begin{equation}\label{id:Lokale_Darstellung_Komposition_durch_glatte_Abb}
			\begin{multlined}[0.8\columnwidth]
			\LogIdCWfak[\RMetLok{g}{\kappa}]{W_\kappa}{\delta_\kappa}
				( \compIdWeights{\R^d}{\WeightsLok{\GewFunk}{\kappa}}
						(\ExpIdCWfak{V_\kappa}{\delta_\kappa}(\VectFLok{X}{\kappa}), \ExpIdCWfak{W_\kappa}{\delta_\kappa}(\VectFLok{Y}{\kappa}))
					+ \ExpIdCWfak{W_\kappa}{\delta_\kappa}(\VectFLok{Y}{\kappa})
				)
			\\
			= \RlogVR{\RMetLok{g}{\kappa}} \circ (\id{W_\kappa}, \Rexp{\RMetLok{g}{\kappa}} \circ (\id{V_\kappa}, \VectFLok{X}{\kappa}) \circ \Rexp{\RMetLok{g}{\kappa}} \circ (\id{W_\kappa}, \VectFLok{Y}{\kappa}) ).
			\end{multlined}
		\end{equation}

		\item\label{ass:patched_Inversion_Rexplog}
		Additionally, let $\rho \in ]0, 1[$. Then the map
		\[
			\mathbf{I}_E^L
			:
			D_{\rho} \to R_\rho
			: \phi \mapsto \LogIdCWprod{W}{\delta^L}
			( \InvIdWeights{W}{\WeightsAtl{\GewFunk}{\atlasB}}
				(\ExpIdCWprod{V}{\delta^E}(\phi))
		\]
		is defined and smooth. Here
		\begin{equation*}
			D_{\rho} \ndef \set{\phi \in \CcFproAtK{V}{\R^d}{\atlasB}{\infty}{\atlasA}}%
			{\hn{\phi}{\omega^E}{0}
			< \tfrac{(1 - \rho)}{2} \min(\rho, r)
			\text{ and }
			\hn{\phi}{1_{\disjointUkM{V_\kappa}}}{1} < \tfrac{\rho}{2}
			}
		\end{equation*}
		and
		\[
			R_\rho \ndef \set{\phi \in \CcFproAtK{W}{\R^d }{\atlasB}{\infty}{\atlasA}}%
			{\hn{\phi}{\omega^L}{0} < \tfrac{\min(r, \rho)}{2} }.
		\]
		Moreover, we have that
		\begin{equation}
			\label{est:log0-normInv}
			\hn{\mathbf{I}_E^L(\phi)}{\omega^L}{0}
			\leq
			\frac{\hn{\phi}{\omega^E}{0} }{1 - (\hn{\phi}{\omega^E}{0} + \hn{\phi}{1_{\disjointUkM{V_\kappa}} }{1})}
		\end{equation}
		For $X\in \VecFields{M}$ such that $\embMcWatl{\atlasB}(X) \in D_{\rho}$,
		we have for $\kappa \in \atlasA$ that
		\begin{equation}\label{id:Lokale_Darstellung_Inversion_durch_glatte_Abb}
			\LogIdCWfak{W_\kappa}{\delta^L_\kappa}
				( \InvIdWeights{W_\kappa}{\WeightsLok{\GewFunk}{\kappa}}
					(\ExpIdCWfak{V_\kappa}{\delta^E_\kappa}(\VectFLok{X}{\kappa})))
			= \RlogVR{\RMetLok{g}{\kappa}} \circ(\id{W_\kappa}, \rest{(\Rexp{\RMetLok{g}{\kappa}} \circ (\id{V_\kappa}, \VectFLok{X}{\kappa}))^{-1}}{W_\kappa}).
		\end{equation}
	\end{assertions}
	Note that above we occasionally identified maps with their restriction.
\end{lem}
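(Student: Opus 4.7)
The plan is to construct $\mathbf{C}_E^L$ and $\mathbf{I}_E^L$ as compositions of the four smooth maps produced in the three preceding lemmas of this subsection (simultaneous superposition with the Riemannian exponential $\ExpIdCWprod{V}{\delta^E}$ and with the logarithm $\LogIdCWprod{W}{\delta^L}$; simultaneous composition $\compIdWeights{\R^d}{\WeightsAtl{\GewFunk}{\atlasB}}$; and simultaneous inversion $\InvIdWeights{W}{\WeightsAtl{\GewFunk}{\atlasB}}$), together with vector addition. Since each building block is smooth on its declared domain, smoothness of the composites reduces to checking that the output of each intermediate stage actually lies in the domain of the next map. The workhorse for certifying such a condition from a bound on an $\omega$-weighted $0$-norm is the inclusion \ref{incl:1-Kugel_f0-norm_sub_CFof}; the pointwise estimates from subsection~\refer{susec:RiemannExpLog_VR} then supply the bookkeeping that propagates these bounds from stage to stage.

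For part~(a), the chain is $(\gamma,\eta) \mapsto (\gamma^E,\eta^E) \mapsto C \mapsto C + \eta^E \mapsto \mathbf{C}_E^L(\gamma,\eta)$. Step~(i): the bounds $\hn{\gamma}{\omega^E}{0} < \tfrac12$ and $\hn{\eta}{\omega^E}{0} < r$, together with the fact that $\omega^E$ adjusts \ref{Compo_patched_adjust_exp}, place $\gamma,\eta$ in the domain of the exponential superposition via \ref{incl:1-Kugel_f0-norm_sub_CFof}. Step~(ii): by \refer{lem:Naehe_expVF-id_C0} applied chart by chart, combined with the adjusting bound $\abs{\omega^E_\kappa} \geq \BndFstAblRex[\RMetLok{g}{\kappa}]{V_\kappa}{\delta^E_\kappa}$, the unweighted $0$-norm of $\eta^E$ is less than $r$, so $(\gamma^E,\eta^E)$ lies in the domain of $\compIdWeights{\R^d}{\WeightsAtl{\GewFunk}{\atlasB}}$. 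Step~(iii): two further applications of \refer{lem:Naehe_expVF-id_C0} estimate $\hn{C+\eta^E}{\omega^L}{0}$ in terms of $\hn{\gamma}{\omega^E}{0}$, $\hn{\gamma}{1_{\disjointUkM{V_\kappa}}}{1}$ and $\hn{\eta}{\omega^E}{0}$, where the compatibility condition \ref{est:adjustWeights_exp-log} converts $\omega^E$-weighted pointwise bounds into $\omega^L$-weighted ones; this yields \ref{est:log0-normCompo} and, by \ref{incl:1-Kugel_f0-norm_sub_CFof}, puts $C+\eta^E$ in the domain of $\LogIdCWprod{W}{\delta^L}$. The local identity \ref{id:Lokale_Darstellung_Komposition_durch_glatte_Abb} is a direct unwinding of the defining formulas for $\ExpIdCWfak$, $\LogIdCWfak$ and $\compIdWeights{\R^d}{\cdot}$.

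For part~(b) the chain is $\phi \mapsto \phi^E \mapsto I \mapsto \mathbf{I}_E^L(\phi)$, and the only new issue is that $\phi^E$ must land in $\mathcal{D}^\tau$ for some $\tau \in \,]\rho,1[\,$. The $0$-norm condition again follows from \refer{lem:Naehe_expVF-id_C0}; the $1$-norm condition requires \refer{lem:Naehe_expFunc-id_C1} together with the lower bound $\abs{\omega^E_\kappa} \geq 1 + \BndSndAblRex[\RMetLok{g}{\kappa}]{V_\kappa}{\delta^E_\kappa}$ built into the adjusting property \ref{Compo_patched_adjust_exp} and $\hn{\phi}{1_{\disjointUkM{V_\kappa}}}{1} < \rho/2$. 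Estimate \ref{est:log0-normInv} follows from the Banach-fixed-point style bound $\norm{(\id+\psi)^{-1}(y) - y} \leq \norm{\psi(y)}/(1 - \Opnorm{D\psi})$ implicit in \refer{prop:Simultane_Inv-Kompo_glatt}, combined once more with \refer{lem:Naehe_expVF-id_C0} and the reweighting \ref{est:adjustWeights_exp-log}; identity \ref{id:Lokale_Darstellung_Inversion_durch_glatte_Abb} is another unwinding. The principal obstacle throughout is careful bookkeeping: four parallel norm conditions must be juggled ($\omega^E$- and $\omega^L$-weighted $0$-norms plus unweighted $0$- and $1$-norms), the passage between $\omega^E$ and $\omega^L$ is possible only via \ref{est:adjustWeights_exp-log}, and switching between weighted and unweighted norms requires \refer{lem:est_1-0-norm_f-0-norm}, so every estimate must be tagged carefully with which norm is being controlled.
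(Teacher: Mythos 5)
Your proposal follows essentially the same route as the paper's proof: smoothness is reduced to well-definedness of the composite of the previously established smooth building blocks, and the domain conditions are verified stage by stage using \ref{incl:1-Kugel_f0-norm_sub_CFof}, the quantitative estimates of \refer{susec:RiemannExpLog_VR} (\refer{lem:Naehe_expVF-id_C0}, \refer{lem:Naehe_expFunc-id_C1}, \refer{lem:Norm_LogFunc_C0}), the norm comparison \refer{lem:est_1-0-norm_f-0-norm}, and the reweighting \ref{est:adjustWeights_exp-log}, with the local identities obtained by unwinding definitions. The only imprecision is in step~(iii) of part~(a), where the bound on $\hn{C+\eta^E}{\omega^L}{0}$ actually requires \refer{lem:Naehe_expFunc-id_C1} (for the $1$-norm of $\gamma^E$), an elementary estimate for the weighted $0$-norm of a composition, and \refer{lem:Norm_LogFunc_C0}, rather than just two further applications of \refer{lem:Naehe_expVF-id_C0}; this does not change the argument.
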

\begin{proof}
	\ref{ass:patched_Compo_Rexplog}
	By our previous elaborations,
	the map $\mathbf{C}_E^L$ is smooth if it is defined,
	so we shall prove the latter.
	Let $\gamma \in D_1$ and $\eta \in D_2$.
	Since $\omega^E$ is an adjusting weight for $\famiAtl{\delta^E_\kappa}{\atlasA}$
	and $\hn{\gamma}{\omega^E}{0}, \hn{\eta}{\omega^E}{0} < 1$ by our assumptions,
	we see with \ref{incl:1-Kugel_f0-norm_sub_CFof} that
	$\gamma \in \CcFfoPROatK{V}{\Ball{0}{\delta^E_\kappa}}{\atlasB}{\infty}{\omega^E}{\atlasA}$
	and
	$\eta \in \CcFfoPROatK{W}{\Ball{0}{\delta^E_\kappa}}{\atlasB}{\infty}{\omega^E}{\atlasA}$.
	Hence we can apply
	$\ExpIdCWprod{V}{\delta^E}$ to $\gamma$ and $\ExpIdCWprod{W}{\delta^E}$ to $\eta$.
	\\
	We see with \refer{lem:Naehe_expVF-id_C0} using that $\omega^E$ is adjusted to \ref{Compo_patched_adjust_exp}
	(and \ref{est:1-0-norm_f-0-norm_spezielles-f}) that for $\kappa \in \atlasA$,
	\[
		\hn{\ExpIdCWfak{W_\kappa}{\delta_\kappa}(\eta_\kappa) }{1_{W_\kappa}}{0}
		\leq \tfrac{\BndFstAblRex[\RMetLok{g}{\kappa}]{W_\kappa}{\delta^E_\kappa}}%
			{\BndFstAblRex[\RMetLok{g}{\kappa}]{V_\kappa}{\delta^E_\kappa}}
		\hn{\eta_\kappa }{\omega^E_\kappa}{0}
		< r.
	\]
	This implies that we can apply $\compIdWeights{\R^d}{\WeightsAtl{\GewFunk}{\atlasB}}$
	to $(\ExpIdCWprod{V}{\delta}(\gamma), \ExpIdCWprod{W}{\delta}(\eta))$.
	Further, we conclude from \refer{lem:Naehe_expFunc-id_C1} using \ref{est:1-0-norm_f-0-norm_spezielles-f}
	that for each $\kappa \in \atlasA$,
	\[
		\hn{\ExpIdCWfak{V_\kappa}{\delta_\kappa}(\gamma_\kappa)}{1_{V_\kappa}}{1}
		\leq\BndSndAblRex[\RMetLok{g}{\kappa}]{V_\kappa}{\delta^E_\kappa} \hn{\gamma_\kappa}{1_{V_\kappa}}{0}
			+  \hn{\gamma_\kappa}{1_{V_\kappa}}{1}
		\leq \tfrac{\BndSndAblRex[\RMetLok{g}{\kappa}]{V_\kappa}{\delta^E_\kappa}}%
			{1 + \BndSndAblRex[\RMetLok{g}{\kappa}]{V_\kappa}{\delta^E_\kappa}}
			\hn{\gamma_\kappa}{\omega^E_\kappa}{0}
			+ \hn{\gamma_\kappa}{1_{V_\kappa}}{1}
		.
	\]
	Using the elementary estimate \cite[(4.1.3.1)]{arxiv_1006.5580v3},
	the last estimate, \refer{lem:Naehe_expVF-id_C0}
	(and the triangle inequality) we see that for $f \in \GewFunk$ and $\kappa \in \atlasA$,
	\begin{equation*}
		\tag{\ensuremath{\ast}}
		\label{est:kompo_Exp_beschr_Abl}
		\begin{aligned}
		&\hn{\compIdWeights{\R^d}{\WeightsLok{\GewFunk}{\kappa}}
			(\ExpIdCWfak{V_\kappa}{\delta^E_\kappa}(\gamma_\kappa), \ExpIdCWfak{W_\kappa}{\delta^E_\kappa}(\eta_\kappa)
			) + \ExpIdCWfak{W_\kappa}{\delta^E_\kappa}(\eta_\kappa)}{f_\kappa }{0}
		\\
		\leq& \hn{\ExpIdCWfak{V_\kappa}{\delta^E_\kappa}(\gamma_\kappa)}{1_{V_\kappa} }{1} \hn{ \ExpIdCWfak{W_\kappa}{\delta^E_\kappa}(\eta_\kappa)}{f_\kappa }{0}
				+ \hn{\ExpIdCWfak{V_\kappa}{\delta^E_\kappa}(\gamma_\kappa)}{f_\kappa }{0}
				+ \hn{ \ExpIdCWfak{W_\kappa}{\delta^E_\kappa}(\eta_\kappa)}{f_\kappa }{0}
		\\
		\leq& \BndFstAblRex[\RMetLok{g}{\kappa}]{V_\kappa}{\delta^E_\kappa}
		\bigl((\hn{\gamma_\kappa}{\omega^E_\kappa}{0} + \hn{\gamma_\kappa}{1_{V_\kappa}}{1} + 1)
				\hn{ \eta_\kappa}{f_\kappa }{0} + \hn{ \gamma_\kappa}{f_\kappa }{0}\bigr)
				.
		\end{aligned}
	\end{equation*}
	From this estimate we derive using \ref{est:adjustWeights_exp-log} and
	$\hn{\gamma_\kappa}{\omega^E_\kappa}{0}, \hn{\gamma_\kappa}{1_{V_\kappa}}{1} < \tfrac{1}{2}$,
	$\hn{ \eta_\kappa}{\omega^E_\kappa }{0} < \tfrac{1}{4}$
	that
	\begin{multline*}
		\hn{\compIdWeights{\R^d}{\WeightsLok{\GewFunk}{\kappa}}
					(\ExpIdCWfak{V_\kappa}{\delta^E_\kappa}(\gamma_\kappa), \ExpIdCWfak{W_\kappa}{\delta^E_\kappa}(\eta_\kappa)
					) + \ExpIdCWfak{W_\kappa}{\delta^E_\kappa}(\eta_\kappa)}{\omega^L_\kappa }{0}
		\\
		< \BndFstAblRex[\RMetLok{g}{\kappa}]{V_\kappa}{\delta^E_\kappa}
				(2 \hn{ \eta_\kappa}{\omega^L_\kappa }{0} + \hn{ \gamma_\kappa}{\omega^L_\kappa }{0})
		\leq \tfrac{\BndFstAblRex[\RMetLok{g}{\kappa}]{V_\kappa}{\delta^E_\kappa}}%
		{\BndFstAblRex[\RMetLok{g}{\kappa}]{V_\kappa}{\delta^E_\kappa}\BndFstAblRlog[\RMetLok{g}{\kappa}]{W_\kappa}{\delta^L_\kappa}}
		(2 \hn{ \eta_\kappa}{\omega^E_\kappa }{0} + \hn{ \gamma_\kappa}{\omega^E_\kappa }{0})
		< 1
		.
	\end{multline*}
	We conclude with \ref{incl:1-Kugel_f0-norm_sub_CFof} that we can apply $\LogIdCWprod{W}{\delta^L}$ to
	$\compIdWeights{\R^d}{\WeightsAtl{\GewFunk}{\atlasB}}
				(\ExpIdCWprod{V}{\delta^E}(\gamma), \ExpIdCWprod{W}{\delta^E}(\eta)
				) + \ExpIdCWprod{W}{\delta^E}(\eta) $.
	Further, for each $\kappa \in \atlasA$ we have (using \refer{lem:Norm_LogFunc_C0}
	and \ref{est:adjustWeights_exp-log}) that
	\[
		\hn{\LogIdCWfak{W_\kappa}{\delta^L_\kappa}(\phi)}{\omega^L_\kappa }{0}
		\leq \tfrac{\BndFstAblRlog[\RMetLok{g}{\kappa}]{V_\kappa}{\delta^L_\kappa}}%
		{\BndFstAblRex[\RMetLok{g}{\kappa}]{V_\kappa}{\delta^E_\kappa} \BndFstAblRlog[\RMetLok{g}{\kappa}]{W_\kappa}{\delta^L_\kappa}}
		\hn{\phi}{\omega^E_\kappa }{0}
	\]
	for suitable $\phi$. From this and \ref{est:kompo_Exp_beschr_Abl}
	we derive the assertion on the containment in $R$,
	and also that \ref{est:log0-normCompo} holds.
	It remains to prove \ref{id:Lokale_Darstellung_Komposition_durch_glatte_Abb}.
	To this end, let $p \in W_\kappa$. Then
	\begin{align*}
		&\LogIdCWfak[\RMetLok{g}{\kappa}]{W_\kappa}{\delta_\kappa}
				( \compIdWeights{\R^d}{\WeightsLok{\GewFunk}{\kappa}}
					(\ExpIdCWfak{V_\kappa}{\delta_\kappa}(\VectFLok{X}{\kappa}), \ExpIdCWfak{W_\kappa}{\delta_\kappa}(\VectFLok{Y}{\kappa}))
					+ \ExpIdCWfak{W_\kappa}{\delta_\kappa}(\VectFLok{Y}{\kappa}))(p)
		\\
		= &\RlogVR{\RMetLok{g}{\kappa}}(p, \compIdWeights{\R^d}{\WeightsLok{\GewFunk}{\kappa}}
				(\ExpIdCWfak{V_\kappa}{\delta_\kappa}(\VectFLok{X}{\kappa}), \ExpIdCWfak{W_\kappa}{\delta_\kappa}(\VectFLok{Y}{\kappa}))(p)
				+ \ExpIdCWfak{W_\kappa}{\delta_\kappa}(\VectFLok{Y}{\kappa})(p) + p)
		\\
		= &\RlogVR{\RMetLok{g}{\kappa}}(p, \ExpIdCWfak{V_\kappa}{\delta_\kappa}(\VectFLok{X}{\kappa})(\Rexp{\RMetLok{g}{\kappa}}(p, \VectFLok{Y}{\kappa}(p)) )
					+ \Rexp{\RMetLok{g}{\kappa}}(p, \VectFLok{Y}{\kappa}(p)))
		\\
		= &\RlogVR{\RMetLok{g}{\kappa}}(p, \Rexp{\RMetLok{g}{\kappa}}(\Rexp{\RMetLok{g}{\kappa}}(p, \VectFLok{Y}{\kappa}(p)), \VectFLok{X}{\kappa}(\Rexp{\RMetLok{g}{\kappa}}(p, \VectFLok{Y}{\kappa}(p))) )
		\\
		= &\RlogVR{\RMetLok{g}{\kappa}}(p, (\Rexp{\RMetLok{g}{\kappa}} \circ (\id{V_\kappa}, \VectFLok{X}{\kappa}))(\Rexp{\RMetLok{g}{\kappa}}(p, \VectFLok{Y}{\kappa}(p))) )
		\\
		=& \RlogVR{\RMetLok{g}{\kappa}}(p, (\Rexp{\RMetLok{g}{\kappa}} \circ (\id{V_\kappa}, \VectFLok{X}{\kappa}) \circ \Rexp{\RMetLok{g}{\kappa}} \circ (\id{W_\kappa}, \VectFLok{Y}{\kappa}))(p)).
	\end{align*}
	This shows that \ref{id:Lokale_Darstellung_Komposition_durch_glatte_Abb} holds.

	\ref{ass:patched_Inversion_Rexplog}
	By our previous elaborations,
	the map $\mathbf{I}_E^L$ is smooth if it is defined,
	so we shall prove the latter.
	Let $\phi \in D_{\rho}$.
	Since $\omega^E$ is an adjusting weight for $\famiAtl{\delta^E_\kappa}{\atlasA}$
	and $\hn{\phi}{\omega^E}{0} < 1$ by our assumptions,
	we see with \ref{incl:1-Kugel_f0-norm_sub_CFof} that
	$\phi \in \CcFfoPROatK{V}{\Ball{0}{\delta^E_\kappa}}{\atlasB}{\infty}{\omega}{\atlasA}$.
	Hence we can apply $\ExpIdCWprod{V}{\delta^E}$ to $\phi$.
	We see with \refer{lem:Naehe_expVF-id_C0} using that $\omega^E$
	is adjusting for \ref{Compo_patched_adjust_exp} (and \ref{est:1-0-norm_f-0-norm_spezielles-f})
	that for each $\kappa \in \atlasA$,
	\[
		\hn{\ExpIdCWfak{V_\kappa}{\delta^E_\kappa}(\phi_\kappa)}{1_{V_\kappa}}{0}
		<  \tfrac{\BndFstAblRex[\RMetLok{g}{\kappa}]{V_\kappa}{\delta^E_\kappa}}{\BndFstAblRex[\RMetLok{g}{\kappa}]{V_\kappa}{\delta^E_\kappa}}
		\hn{\phi}{\omega^E}{0}
		< \tfrac{r}{2} (1 - \rho).
	\]
	Similarly, we conclude from \refer{lem:Naehe_expFunc-id_C1} and \ref{est:1-0-norm_f-0-norm_spezielles-f}
	that for each $\kappa \in \atlasA$,
	\[
		\label{est:simExp_1-1-norm}
		\tag{\ensuremath{\ast}}
		\hn{\ExpIdCWfak{V_\kappa}{\delta^E_\kappa}(\phi_\kappa)}{1_{V_\kappa}}{1}
		\leq \BndSndAblRex[\RMetLok{g}{\kappa}]{V_\kappa}{\delta^E_\kappa} \hn{\phi_\kappa}{1_{V_\kappa}}{0}
			+  \hn{\phi_\kappa}{1_{V_\kappa}}{1}
		\leq \tfrac{\BndSndAblRex[\RMetLok{g}{\kappa}]{V_\kappa}{\delta^E_\kappa}}%
			{\BndSndAblRex[\RMetLok{g}{\kappa}]{V_\kappa}{\delta^E_\kappa} + 1} \hn{\phi_\kappa}{\omega^E_\kappa}{0}
			+ \hn{\phi_\kappa}{1_{V_\kappa}}{1}
		.
	\]
	Hence we see using $\hn{\phi_\kappa}{\omega^E_\kappa}{0}, \hn{\phi_\kappa}{1_{V_\kappa}}{1} < \tfrac{\rho}{2}$ and
	the last two estimates that we can apply $\InvIdWeights{W}{\WeightsAtl{\GewFunk}{\atlasB}}$
	to $\ExpIdCWprod{V}{\delta^E}(\phi)$.
	Further, using the elementary estimate \cite[(4.2.5.1)]{arxiv_1006.5580v3},
	\refer{lem:Naehe_expVF-id_C0} and \ref{est:simExp_1-1-norm}
	we see that for $f \in \GewFunk$ and $\kappa \in \atlasA$,
	\[
		\label{est:simInversionExp_w0-norm}
		\tag{\ensuremath{\ast\ast}}
		\hn{\InvIdWeights{W_\kappa}{\WeightsLok{\GewFunk}{\kappa}}
					(\ExpIdCWfak{V_\kappa}{\delta^E_\kappa}(\phi_\kappa)) }{f_\kappa}{0}
		\leq
		\tfrac{\hn{\ExpIdCWfak{V_\kappa}{\delta^E_\kappa}(\phi_\kappa)}{f_\kappa}{0}}%
		{1 - \hn{\ExpIdCWfak{V}{\delta^E_\kappa}(\phi_\kappa)}{1_{W_\kappa}}{1} }
		< \tfrac{\BndFstAblRex[\RMetLok{g}{\kappa}]{V_\kappa}{\delta^E_\kappa}}%
				{1 - (\hn{\phi_\kappa}{\omega^E_\kappa}{0} + \hn{\phi_\kappa}{1_{V_\kappa}}{1})}
		\hn{\phi_\kappa}{f_\kappa}{0}.
	\]
	From this, we conclude with \ref{est:adjustWeights_exp-log},
	using $\hn{\phi_\kappa}{\omega^E_\kappa}{0} < \tfrac{\rho (1 - \rho)}{2}$
	and $\hn{\phi_\kappa}{1_{V_\kappa}}{1} < \tfrac{\rho}{2}$, that
	\[
		\hn{\InvIdWeights{W_\kappa}{\WeightsLok{\GewFunk}{\kappa}}
							(\ExpIdCWfak{V_\kappa}{\delta^E_\kappa}(\phi_\kappa)) }{\omega^L_\kappa}{0}
		\leq \tfrac{\BndFstAblRex[\RMetLok{g}{\kappa}]{V_\kappa}{\delta^E_\kappa}}%
				{(1 - \rho)\BndFstAblRex[\RMetLok{g}{\kappa}]{V_\kappa}{\delta^E_\kappa}}
			\hn{\phi_\kappa}{\omega^E_\kappa}{0}
		< \tfrac{\rho}{2}.
	\]
	Since $\omega^L$ is adjusting to $\delta^L$, we see from this estimate using
	\ref{incl:1-Kugel_f0-norm_sub_CFof} that we can apply $\LogIdCWprod{W}{\delta^L}$ to $\phi$.
	Another application of \refer{lem:Norm_LogFunc_C0} and \ref{est:simInversionExp_w0-norm}
	shows that
	\begin{multline*}
		\hn{\LogIdCWfak{W_\kappa}{\delta^L_\kappa}
			(\InvIdWeights{W_\kappa}{\WeightsLok{\GewFunk}{\kappa}}
			(\ExpIdCWfak{V_\kappa}{\delta^E_\kappa}(\phi_\kappa))) }{\omega^L_\kappa}{0}
		\leq \BndFstAblRlog[\RMetLok{g}{\kappa}]{W_\kappa}{\delta^L_\kappa}
			\hn{\InvIdWeights{W_\kappa}{\WeightsLok{\GewFunk}{\kappa}}
				(\ExpIdCWfak{V_\kappa}{\delta^E_\kappa}(\phi_\kappa)) }{\omega^L_\kappa}{0}
		\\
		< \frac{\BndFstAblRlog[\RMetLok{g}{\kappa}]{W_\kappa}{\delta^L_\kappa} \BndFstAblRex[\RMetLok{g}{\kappa}]{V_\kappa}{\delta^E_\kappa}}%
			{1 - (\hn{\phi_\kappa}{\omega^E_\kappa}{0} + \hn{\phi_\kappa}{1_{V_\kappa}}{1})}
				\hn{\phi_\kappa}{\omega^L_\kappa}{0}
		\leq
		\frac{\hn{\phi_\kappa}{\omega^E_\kappa}{0}}%
			{1 - (\hn{\phi_\kappa}{\omega^E_\kappa}{0} + \hn{\phi_\kappa}{1_{V_\kappa}}{1})}
	\end{multline*}
	So we derive the assertion on the containment in $R_\rho$, and that \ref{est:log0-normInv} holds.
	To prove \ref{id:Lokale_Darstellung_Inversion_durch_glatte_Abb},
	let $\kappa \in \atlasA$ and $p \in W_\kappa$. Then
	\begin{multline*}
		\LogIdCWfak[\RMetLok{g}{\kappa}]{W_\kappa}{\delta_\kappa}
				( \InvIdWeights{W_\kappa}{\WeightsLok{\GewFunk}{\kappa}}
					(\ExpIdCWfak{V_\kappa}{\delta_\kappa}(\VectFLok{X}{\kappa})))(p)
		= \RlogVR{\RMetLok{g}{\kappa}}(p, \InvIdWeights{W_\kappa}{\WeightsLok{\GewFunk}{\kappa}}
						(\ExpIdCWfak{V_\kappa}{\delta_\kappa}(\VectFLok{X}{\kappa}))(p) + p)
		\\
		= \RlogVR{\RMetLok{g}{\kappa}}(p, \rest{(\ExpIdCWfak{V_\kappa}{\delta_\kappa}(\VectFLok{X}{\kappa}) + \id{V_\kappa})^{-1}}{W_\kappa}(p))
		= \RlogVR{\RMetLok{g}{\kappa}}(p, \rest{(\Rexp{\RMetLok{g}{\kappa}} \circ (\id{V_\kappa}, \VectFLok{X}{\kappa}))^{-1}}{W_\kappa}(p)).
	\end{multline*}
	This shows that \ref{id:Lokale_Darstellung_Inversion_durch_glatte_Abb} holds.
\end{proof}

\subsection{Construction of weights on manifolds}
\label{susec:Construction_weights_manifolds}

We first define the terms saturated resp. adjusted sets of weights,
and then show that such weight sets exist.
\begin{defi}[Saturated and adjusted sets of weights]\label{def:saturated_adjusted-weights}
	Let $d \in \N^*$, $M$ a $d$-dimensional manifold, $\GewFunk \sub \cl{\R}^M$,
	$\atlasA = \sset{\kappa : \widetilde{U}_\kappa \to U_\kappa}$ an atlas for $M$
	and $\delta_\kappa > 0$ for each $\kappa \in \atlasA$.
	\begin{assertions}
		\item
		We call $\omega : M \to \R$ \emph{adjusted to $(\atlasA , \famiAtl{\delta_\kappa}{\atlasA})$} if
		there exists $K > 0$ such that $\weightAtl{K \cdot \omega}{\atlasA}$ is an adjusting weight for $\famiAtl{\delta_\kappa}{\atlasA}$.
		We call $\GewFunk$ \emph{adjusted to $(\atlasA , \famiAtl{\delta_\kappa}{\atlasA})$}
		if there exists $\omega \in \GewFunk$ that is adjusted to this pair.

		\item\label{ass1:chart_change_Abl-Mult}
		Let $\atlasA_1$ and $\atlasA_2$ be atlases for $M$.
		We say $\GewFunk$ is \emph{saturated with respect to $(\atlasA_1, \atlasA_2)$} if
		\ref{cond:est_sim-multiplier_weights} is satisfied for
		$\WeightsAtl{\GewFunk}{ \atCap{\atlasA_1}{\atlasA_2} }$ and
		$
			\fami{\rest{\FAbl{(\kappa \circ \phi^{-1})}}{\phi(\widetilde{U}_\kappa \cap \widetilde{U}_\phi)} }{(\kappa, \phi)}{ \atProd{\atlasA_1}{ \atlasA_2 }}
		$.

		\item\label{ass1:log_exp_Abl-Mult}
		Let $g$ be a Riemannian metric on $M$, and for each $\kappa \in \atlasA$ let
		$\widetilde{V}_\kappa$ be a relatively compact set with $\cl{\widetilde{V}_\kappa} \sub \widetilde{U}_\kappa$
		such that $\famiAtl{\widetilde{V}_\kappa}{\atlasA}$ is a cover of $M$,
		$\delta^E_\kappa \in ]0, \grenzExp[\RMetLok{g}{\kappa}]{V_\kappa}{U_\kappa}[$,
		$\delta^L_\kappa \in ]0, \grenzLog[\RMetLok{g}{\kappa}]{V_\kappa}{U_\kappa}[$
		(where $V_\kappa \ndef \kappa(\widetilde{V}_\kappa)$)
		and $\atlasB \ndef \set{\rest[V_\kappa]{\kappa}{\widetilde{V}_\kappa}}{\kappa \in \atlasA}$.
		We say $\GewFunk$ is \emph{saturated with respect to $(\atlasA, \atlasB, g)$} if
		$\WeightsAtl{\GewFunk}{\atlasB}$ satisfies \ref{cond:est_SP-Abb_weights}
		for $\famiAtl{ \ExpMinusLok{V_\kappa}{\delta^E_\kappa}{\RMetLok{g}{\kappa}} }{\atlasA}$
		and $\famiAtl{ \LogPlusVR{V_\kappa}{\delta^L_\kappa}{\RMetLok{g}{\kappa}} }{\atlasA}$
		respectively.%
	\end{assertions}
	If both \ref{ass1:chart_change_Abl-Mult} and \ref{ass1:log_exp_Abl-Mult} hold,
	we call $\GewFunk$ \emph{saturated with respect to $((\atlasA_1, \atlasA_2), (\atlasA, \atlasB, g))$}.
	Occasionally, we may just say that $\GewFunk$ is adjusted or saturated.
\end{defi}

\paragraph{Construction of adjusted weights}
We show that for a locally finite atlas whose chart domains are relatively compact, adjusted weights exist.

\begin{lem}[Construction of an adjusted weight]%
\label{lem:construction_adjusted_weight_MFK}
	Let $d \in \N^*$, $M$ be a $d$-dimensional manifold and
	$\atlasA = \sset{\kappa : \widetilde{U}_\kappa \to U_\kappa}$ a locally finite atlas for $M$.
	For each $\kappa \in \atlasA$, let $\eps_\kappa > 0$
	and $\widetilde{V}_\kappa \sub \widetilde{U}_\kappa$
	be an open, nonempty, relatively compact set
	such that $\famiAtl{\widetilde{V}_\kappa}{U}$ is a cover of $M$.
	Then there exists a weight $\omega : M \to \R$ adjusted to $(\atlasB, \famiAtl{\eps_\kappa}{\atlasA})$,
	where $\atlasB \ndef \set{\rest[\kappa(\widetilde{V}_\kappa)]{\kappa}{\widetilde{V}_\kappa} }{\kappa \in \atlasA}$.
\end{lem}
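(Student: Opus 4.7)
The plan is to construct $\omega$ as a pointwise supremum of the required lower-bound constants, and use the local finiteness of $\atlasA$ combined with the relative compactness of each $\widetilde{V}_\kappa$ to obtain the upper bound automatically.

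Concretely, I would first set $a_\kappa \ndef \max(\tfrac{1}{\eps_\kappa}, 1) \geq 1$ for each $\kappa \in \atlasA$, and define
\[
	\omega : M \to \R, \qquad \omega(p) \ndef \sup\set{a_\kappa}{\kappa \in \atlasA,\, p \in \widetilde{V}_\kappa}.
\]
Since $\famiAtl{\widetilde{V}_\kappa}{\atlasA}$ covers $M$, this set is nonempty for every $p$, and $\omega(p) \geq 1$. Moreover, by local finiteness of $\famiAtl{\widetilde{U}_\kappa}{\atlasA}$ (hence of $\famiAtl{\widetilde{V}_\kappa}{\atlasA}$) each $p$ lies in only finitely many $\widetilde{V}_\kappa$, so the supremum is attained and finite, i.e.\ $\omega(p) \in [1, \infty)$. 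I then claim that $K = 1$ already works, i.e.\ that $\weightAtl{\omega}{\atlasB}$ is an adjusting weight for $\famiAtl{\eps_\kappa}{\atlasA}$.

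The lower bound is immediate: for $p \in \widetilde{V}_\kappa$, $a_\kappa$ appears in the set defining $\omega(p)$, so $\omega(p) \geq a_\kappa$, and translating via $\kappa$ gives $\inf_{x \in V_\kappa} \abs{\omega \circ \kappa^{-1}(x)} \geq \max(\tfrac{1}{\eps_\kappa}, 1)$. For the upper bound, the key observation is that local finiteness combined with the compactness of $\cl{\widetilde{V}_\kappa}$ forces
\[
	I_\kappa \ndef \set{\kappa' \in \atlasA}{\widetilde{V}_{\kappa'} \cap \widetilde{V}_\kappa \neq \emptyset}
\]
to be finite: cover $\cl{\widetilde{V}_\kappa}$ by finitely many open sets, each meeting only finitely many $\widetilde{V}_{\kappa'}$, and $I_\kappa$ is contained in the union of those finite collections. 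Then for every $p \in \widetilde{V}_\kappa$ the supremum defining $\omega(p)$ ranges over a subset of $\set{a_{\kappa'}}{\kappa' \in I_\kappa}$, so
\[
	\sup_{x \in V_\kappa} \abs{\omega \circ \kappa^{-1}(x)} \leq \max_{\kappa' \in I_\kappa} a_{\kappa'} < \infty.
\]

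The only mildly delicate step is the passage from the pointwise local finiteness of $\famiAtl{\widetilde{V}_\kappa}{\atlasA}$ to the global finiteness of $I_\kappa$, which hinges on relative compactness of $\widetilde{V}_\kappa$; the remainder of the argument is a direct unwinding of \refer{def:saturated_adjusted-weights} and the definition of an adjusting weight in \refer{def:res_prods-max_weights}.
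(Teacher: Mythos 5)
Your proposal is correct and follows essentially the same route as the paper: the paper also defines $\omega$ as a pointwise maximum of per-chart functions $f_\kappa$ bounded below by $\max(\tfrac{1}{\eps_\kappa},1)$ on $\widetilde{V}_\kappa$, gets the lower bound on each $V_\kappa$ for free, and obtains the finiteness of $\sup_{x\in V_\kappa}\abs{\omega\circ\kappa^{-1}(x)}$ from local finiteness together with relative compactness of $\widetilde{V}_\kappa$. Your version merely instantiates the $f_\kappa$ as the explicit constants $a_\kappa$ on $\widetilde{V}_\kappa$, which is a harmless specialization.
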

\begin{proof}
	For each $\kappa \in \atlasA$, let $f_\kappa : M \to \R$ be a function
	such that $\supp{f_\kappa} \sub \widetilde{U}_\kappa$, $\sup_{x \in M} \abs{f_\kappa}(x) < \infty$
	and $\inf_{x \in \widetilde{V}_\kappa} \abs{f_\kappa}(x) \geq \max(\tfrac{1}{\eps_\kappa}, 1)$.
	For $x \in M$, we set
	\[
		\omega(x) \ndef \max_{\kappa \in \atlasA} \abs{f_\kappa(x)};
	\]
	note that this definition is possible because each $x \in M$ is only contained in finitely many sets
	$\widetilde{U}_\kappa$.
	Then
	$
	\abs{\omega}(x)
	\geq \abs{f_\kappa}(x)
	\geq \max(\tfrac{1}{\eps_\kappa}, 1)
	$
	for each $\kappa \in \atlasA$ and $x \in \widetilde{V}_\kappa$. Further, since each $\widetilde{V}_\kappa$ is relatively compact,
	it has nonempty intersections with only finitely many sets $\set{\widetilde{U}_\kappa}{\kappa \in \atlasA}$.
	That implies that $\sup_{x \in \widetilde{V}_\kappa} \abs{\omega}(x) < \infty$.
	Hence $\weightAtl{\omega}{\atlasB}$
	is an adjusting weight for $\famiAtl{\eps_\kappa}{\atlasA}$.
\end{proof}
\paragraph{Saturating weights}
We not only show that saturated weight sets exist,
but moreover that each set of weights has a \enquote{\minSatExt}.
We first prove a variation of this assertion for a single weight.
\begin{lem}\label{lem:extension_weights_Multiplier}
	Let $d \in \N^*$, $M$ be a $d$-dimensional manifold,
	$\famiAtl{\widetilde{U}_\kappa}{\atlasA}$ a locally finite cover of $M$,
	$I$ a nonempty set, $f : M \to \cl{\R}$ and $\fami{B_{\kappa, i}}{(\kappa, i)}{\atlasA \times I}$
	a family of nonnegative real numbers.
	Then there exists a set $\extMult{f}{B} \sub \cl{\R}^M$ such that
	\[
		(\forall x \in M)(\exists V \in \neigh{x})(\forall g \in \extMult{f}{B})(\exists K > 0)
		\, \abs{\rest{g}{V}} \leq K \cdot \abs{\rest{f}{V}}
	\]
	and
	\begin{equation}
		\label{multMFK}
		(\forall i \in I)(\exists g \in \extMult{f}{B})(\forall \kappa \in \atlasA)
		\,B_{\kappa, i} \cdot \left\abs{\rest{f}{\widetilde{U}_\kappa}\right} \leq \left\abs{\rest{g}{\widetilde{U}_\kappa}\right}.
	\end{equation}
	The set $\extMult{f}{B}$ is minimal in the sense that
	for any $\mathcal{H} \sub \cl{\R}^M$ that also satisfies \ref{multMFK}, we have
	\begin{equation}
		\label{minimalMult}
		(\forall g \in \extMult{f}{B}) (\exists h \in \mathcal{H})
		\,\abs{g} \leq \abs{h}.
	\end{equation}
	
\end{lem}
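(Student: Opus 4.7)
The plan is to define $\extMult{f}{B}$ explicitly as a family indexed by $I$, using for each $i$ a pointwise supremum of the local data $B_{\kappa, i}\abs{f}$. Specifically, for each $i \in I$ I would set
\[
g_i(x) \ndef \sup\set{B_{\kappa, i}\abs{f(x)}}{\kappa \in \atlasA,\, x \in \widetilde{U}_\kappa}
\]
and take $\extMult{f}{B} \ndef \set{g_i}{i \in I} \sub \cl{\R}^M$. This is, essentially by construction, the smallest family of nonnegative functions dominating the required multiples of $\abs{f}$ on the relevant chart domains, so the multiplier property and minimality should follow more or less automatically.

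First I would verify that each $g_i$ is well-defined: since $\famiAtl{\widetilde{U}_\kappa}{\atlasA}$ is locally finite and covers $M$, the defining supremum is in fact a maximum over a nonempty finite set at each point, and there is no pathology as long as one adopts the convention $0\cdot\infty = 0$. Next I would establish the local domination by $f$: around each $x \in M$ pick an open neighborhood $V \in \neigh{x}$ meeting only finitely many chart domains $\widetilde{U}_{\kappa_1},\dotsc,\widetilde{U}_{\kappa_n}$; then on $V$ every $g_i$ satisfies $\abs{g_i} \leq (\max_j B_{\kappa_j, i})\,\abs{f}$, giving the required constant $K$ (depending on $g_i$ and on $x$ but not on the particular point of $V$). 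Crucially, $V$ is chosen before we fix $g$, which is exactly what the statement demands. The multiplier property \ref{multMFK} is then immediate: for every $i \in I$ and every $x \in \widetilde{U}_\kappa$, the pair $(\kappa, i)$ is admissible in the supremum defining $g_i(x)$, so $B_{\kappa,i}\abs{f} \leq g_i$ on $\widetilde{U}_\kappa$.

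For the minimality statement \ref{minimalMult}, I would argue as follows: let $\mathcal{H}\sub \cl{\R}^M$ be any set satisfying \ref{multMFK}, and let $g_i \in \extMult{f}{B}$ be arbitrary. Applying \ref{multMFK} for $\mathcal{H}$ to the single index $i$ produces a witness $h \in \mathcal{H}$ with $B_{\kappa,i}\abs{f(x)} \leq \abs{h(x)}$ for every $x \in \widetilde{U}_\kappa$ and every $\kappa \in \atlasA$; taking the maximum over those $\kappa$ containing $x$ yields $g_i(x) \leq \abs{h(x)}$, whence $\abs{g_i} \leq \abs{h}$.

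The main (and really quite minor) obstacle is the bookkeeping around local finiteness and the extended-real values: one has to verify that the maximum is well-behaved at points where $f$ takes the value $\infty$, and that the neighborhood $V$ in the first condition can indeed be chosen independently of $g$. Once these points are settled, the construction is essentially forced by the three conditions that $\extMult{f}{B}$ is required to satisfy, and no deeper input is needed.
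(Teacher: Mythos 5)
Your proposal is correct and follows essentially the same route as the paper: the paper also defines $\extMult{f}{B}$ as the family $\set{g_i}{i \in I}$ with $g_i(x)$ the (finite, by local finiteness) maximum of $B_{\kappa,i}\cdot f(x)$ over the charts containing $x$, verifies the local bound on a neighborhood meeting only finitely many chart domains, and proves minimality by taking the witness $h$ for the index $i$ and maximizing over $\kappa$ pointwise. Your use of $\abs{f}$ inside the supremum is an inessential cosmetic difference.
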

\begin{proof}
	Let $i \in I$ and $x \in M$. Then we define
	\[
		g_i(x) \ndef \max \set{B_{\kappa, i} \cdot f(x)}{ \kappa \in \atlasA, x \in \widetilde{U}_\kappa }.
	\]
	This definition makes sense since $\famiAtl{\widetilde{U}_\kappa}{U}$ is locally finite.
	In particular, for each $x \in M$ there exist $\kappa_1, \dotsc, \kappa_n \in \atlasA$
	and $V \in \neigh{x}$ such that for $\kappa \in \atlasA$,
	\[
		\widetilde{U}_\kappa \cap V \neq \emptyset
		\iff
		\kappa \in \sset{\kappa_1, \dotsc, \kappa_n}.
	\]
	Hence
	\[
		\abs{\rest{g_i}{V}} \leq \max(B_{\kappa_1, i}, \dotsc, B_{\kappa_n, i} ) \cdot \abs{\rest{f}{V}}.
	\]
	Further, for $\hat{\kappa} \in \atlasA$ such that $x \in \widetilde{U}_{\hat{\kappa}}$, we have
	\[
		B_{\hat{\kappa}, i} \cdot \abs{f(x)}
		\leq \max \set{B_{\kappa, i}}{ \kappa \in \atlasA, x \in \widetilde{U}_\kappa } \cdot \abs{f(x)}
		= \abs{g_i(x)}.
	\]
	So the set
	\[
		\extMult{f}{B} \ndef \set{g_i}{i \in I},
	\]
	has the first two properties. To prove the minimality,
	let $\mathcal{H} \sub \cl{\R}^M$ satisfying \ref{multMFK}.
	Then for each $i \in I$, there exists $h \in \mathcal{H}$ such that
	$
		B_{\kappa, i} \cdot \left\abs{\rest{f}{\widetilde{U}_\kappa}\right} \leq \left\abs{\rest{h}{\widetilde{U}_\kappa}\right}
	$
	for all $\kappa \in \atlasA$. So for $x \in M$, we have
	\[
		(\forall \kappa \in \atlasA: x \in \widetilde{U}_\kappa)\, B_{\kappa, i} \abs{f(x)} \leq \abs{h(x)}.
	\]
	Hence
	\[
		\abs{g_i(x)} = \max \set{B_{\kappa, i} \cdot \abs{f(x)}}{ \kappa \in \atlasA, x \in \widetilde{U}_\kappa }
		\leq \abs{h(x)},
	\]
	which finishes the proof.
\end{proof}

\begin{bem}
	In the last lemma, we proved that
	$\abs{\rest{g}{V}} \leq K \cdot \abs{\rest{f}{V}}$
	for every neighborhood $V$ that has nonempty intersection with only finitely many cover sets.
\end{bem}
Before we show that each weight set has a minimal saturated superset, we make the following definition.
\begin{defi}%
	Let $M$ be a topological space and $f, g : M \to \cl{\R}$.
	We call $g$ \emph{locally $f$-bounded} if
	\[
		(\forall x \in M)(\exists U \in \neigh{x}, K > 0)
		\, \abs{\rest{g}{U}} \leq K \cdot \abs{\rest{f}{U}}.
	\]
	Let $\cW_1, \cW_2 \sub \cl{\R}^M$. We call $\cW_2$ \emph{locally $\cW_1$-bounded}
	if for all $g \in \cW_2$ there exists $f \in \cW_1$ such that $g$ is locally $f$-bounded.
	As usual, we call $f$ \emph{locally bounded} if it us locally $1_M$-bounded.%
\end{defi}
In the next lemma, we need the definition of the maximal extension of weights,
see \refer{def:res_prods-max_weights}.
\begin{lem}[{\minSatExt[M]}]
\label{lem:Erzeugung_Gewichtsmengen_MFK}%
	Let $d \in \N^*$, $(M, g)$ a $d$-dimensional Riemannian manifold,
	$\GewFunk \sub \cl{\R}^M$
	and $\atlasA = \sset{\kappa : \widetilde{U}_\kappa \to U_\kappa}$, $\widetilde{\atlasA}$ locally finite atlases for $M$.
	For each $\kappa \in \atlasA$, let $\widetilde{V}_\kappa$ a relatively compact set such that
	$\cl{\widetilde{V}_\kappa} \sub \widetilde{U}_\kappa$ and
	$\atlasB \ndef \set{\rest[V_\kappa]{\kappa}{\widetilde{V}_\kappa}}{\kappa \in \atlasA}$
	is an atlas for $M$ (here, $V_\kappa \ndef \kappa(\widetilde{V}_\kappa)$ for $\kappa \in \atlasA$),
	$\delta^E_\kappa \in ]0, \grenzExp[\RMetLok{g}{\kappa}]{V_\kappa}{U_\kappa}[$
	and $\delta^L_\kappa \in ]0, \grenzLog[\RMetLok{g}{\kappa}]{V_\kappa}{U_\kappa}[$.
	Then there exists $\GewFunk^e \sub \cl{\R}^M$ that is locally $\GewFunk$-bounded
	and saturated w.r.t. $((\atlasA, \atlasB, g), (\atlasA, \widetilde{\atlasA}))$.

	The set $\GewFunk^e$ is minimal in the sense that
	for any $\mathcal{G} \sub \cl{\R}^M$ that is saturated to the same data and contains $\GewFunk$, we have
	$\GewFunk^e \sub \ExtWeights{\mathcal{G}}$. We call $\GewFunk^e$ a \emph{\minSatExt} of $\GewFunk$.%
\end{lem}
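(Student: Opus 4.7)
We mimic the construction of \refer{lem:extension_weights_Multiplier}, enumerating the three saturation requirements by a single index set $I \ndef \set{(E,\ell),(L,\ell)}{\ell\in\N^*} \cup \set{(T,\ell)}{\ell\in\N}$ and attaching to each pair $(\kappa,\phi) \in \atlasA \times \widetilde{\atlasA}$ (or to each $\kappa$ alone in the exp/log cases) and each $i \in I$ a nonnegative multiplier $B^{\kappa,\phi}_i$ read off from the corresponding condition in \refer{def:saturated_adjusted-weights}. Explicitly, for $i = (E,\ell)$ we put $B^{\kappa}_i \ndef \hn{\ExpMinusLok{V_\kappa}{\delta^E_\kappa}{\RMetLok{g}{\kappa}}}{1_{V_\kappa \times \Ball{0}{\delta^E_\kappa}}}{\ell}$, which is finite by relative compactness of $\cl{V_\kappa} \times \clBall{0}{\delta^E_\kappa}$; analogously for $(L,\ell)$; and for $i = (T,\ell)$ with $\widetilde{U}_\kappa \cap \widetilde{U}_\phi \neq \emptyset$ we put $B^{\kappa,\phi}_i \ndef \hn{\rest{\FAbl{(\kappa\circ\phi^{-1})}}{\phi(\widetilde{U}_\kappa \cap \widetilde{U}_\phi)}}{1}{\ell}$, which we require to be finite (zero when the chart domains are disjoint).

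For each $f \in \GewFunk$ and $i \in I$, define a candidate weight $g_i^f : M \to \cl{\R}$ by
\[
g_i^f(x) \ndef \max \set{B^{\kappa,\phi}_i \abs{f(x)}}{(\kappa,\phi) \in \atlasA \times \widetilde{\atlasA},\ x \in \widetilde{U}_\kappa \cap \widetilde{U}_\phi}
\]
(reducing to a max over $\kappa$ alone when $B^{\kappa,\phi}_i$ does not depend on $\phi$, as for types $E$ and $L$). Local finiteness of both atlases guarantees that at each $x$ only finitely many chart pairs are relevant, so $g_i^f$ is pointwise finite. Set $\GewFunk^e \ndef \GewFunk \cup \set{g_i^f}{f\in\GewFunk,\ i\in I}$; this is exactly the $\extMult{f}{B}$-construction of \refer{lem:extension_weights_Multiplier}, extended to a cover by chart-pair intersections in the transition case.

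Local $\GewFunk$-boundedness is immediate: in a neighbourhood of each $x$ only finitely many chart pairs contribute, so $\abs{g_i^f} \leq C \abs{f}$ there. For saturation, fix a requirement; by construction, $B^{\kappa,\phi}_i \abs{\rest{f}{\widetilde{U}_\kappa \cap \widetilde{U}_\phi}} \leq \abs{\rest{g_i^f}{\widetilde{U}_\kappa \cap \widetilde{U}_\phi}}$ for every pair, and pulling this back through the relevant chart yields \ref{cond:est_SP-Abb_weights} (for exp/log) or \ref{cond:est_sim-multiplier_weights} (for transition), with $g_i^f$ as the dominating weight. Minimality follows the argument of \refer{lem:extension_weights_Multiplier}: given any other saturated $\mathcal{G} \supseteq \GewFunk$ and any pair $(f,i)$, saturation yields $h \in \ExtWeights{\mathcal{G}}$ with $B^{\kappa,\phi}_i \abs{f(x)} \leq \abs{h(x)}$ on each overlap; taking the pointwise maximum on the left yields $g_i^f \leq \abs{h}$ on $M$, so that $\hn{\cdot}{g_i^f}{0} \leq \hn{\cdot}{h}{0}$ and hence $g_i^f \in \ExtWeights{\mathcal{G}}$.

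The principal obstacle is the finiteness of the transition multipliers $B^{\kappa,\phi}_{(T,\ell)}$: on arbitrary locally finite atlases the derivatives of $\kappa\circ\phi^{-1}$ need not remain bounded on the full overlap $\phi(\widetilde{U}_\kappa \cap \widetilde{U}_\phi)$, so this finiteness must either be imposed outright or supplied by the ``adapted'' and ``thin'' atlas hypotheses that underlie the main theorem. A secondary bookkeeping issue is the domain mismatch between the weight atlas $\atCap{\atlasA}{\widetilde{\atlasA}}$ (whose charts take values in $\kappa(\widetilde{U}_\kappa \cap \widetilde{U}_\phi)$) and the transition-map family (defined on $\phi(\widetilde{U}_\kappa \cap \widetilde{U}_\phi)$); as in \refer{prop:VGL_VF_Topos-Chart_Changes_Weights}, this is resolved by pulling back along $\phi \circ \kappa^{-1}$ so that both families are interpreted on a common domain before the saturation inequality is stated.
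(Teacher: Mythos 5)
Your construction is exactly the first stage of the paper's, but stopping after one stage leaves a genuine gap: the set $\GewFunk^e = \GewFunk \cup \set{g_i^f}{f \in \GewFunk, i \in I}$ you define is in general \emph{not} saturated. The conditions \ref{cond:est_sim-multiplier_weights} and \ref{cond:est_SP-Abb_weights} quantify over \emph{every} weight $f$ in the set being tested, so once the $g_i^f$ have been added they too must admit dominating weights in $\ExtWeights{\GewFunk^e}$. Your verification only produces dominators for the original $f \in \GewFunk$. For a new weight $g_i^f$ one would need something dominating $B^{\kappa,\phi}_j\,\abs{g_i^f}$ chartwise, i.e.\ roughly $B_j B_i \abs{f}$, and since the products of the multiplier constants need not be bounded over the (infinite) atlas, such a function is generally not in the maximal extension of your one-step set. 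The paper closes this loop by iterating: it sets $\GewFunk_0 \ndef \GewFunk$, $\GewFunk_{k+1} \ndef \bigcup_{f \in \GewFunk_k}\bigl(\extMult{f}{B^1} \cup \extMult{f}{B^2} \cup \extMult{f}{B^3}\bigr)$ with $B^1, B^2, B^3$ essentially your multiplier families, and takes $\GewFunk^e \ndef \bigcup_{k \in \N}\GewFunk_k$; every weight in $\GewFunk^e$ lies in some $\GewFunk_k$ and its dominators are supplied by $\GewFunk_{k+1}$. Local $\GewFunk$-boundedness and the minimality statement $\GewFunk^e \sub \ExtWeights{\mathcal{G}}$ are then obtained by induction on $k$, each step being exactly the argument you give (via \ref{minimalMult} of \refer{lem:extension_weights_Multiplier}). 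Your argument becomes correct once you add this iteration and the two inductions.

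Two of your side remarks are worth keeping in perspective. The finiteness of the transition multipliers $\hn{\rest{\FAbl{(\kappa\circ\phi^{-1})}}{\phi(\widetilde{U}_\kappa \cap \widetilde{U}_\phi)}}{1_{\phi(\widetilde{U}_\kappa \cap \widetilde{U}_\phi)}}{\ell}$ is indeed not forced by the hypotheses of the lemma as stated; the paper tacitly assumes it (in the application the atlases are adapted and have relatively compact domains), so this is a fair observation but not a defect of your argument relative to the paper's. The domain bookkeeping between $\atCap{\atlasA}{\widetilde{\atlasA}}$ and $\atCap{\widetilde{\atlasA}}{\atlasA}$ is handled the same way in \refer{prop:VGL_VF_Topos-Chart_Changes_Weights} and causes no problem here.
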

\begin{proof}
	We define the following three families:
	\begin{gather*}
		B^1 : \atlasA \times \N^\ast \to [0, \infty[
		: (\kappa, \ell) \mapsto \hn{ \ExpMinusLok{V_\kappa}{\delta^E_\kappa}{\RMetLok{g}{\kappa}} }%
		{1_{V_\kappa \times \Ball{0}{\delta^E_\kappa}}}{\ell},
		\\
		B^2 : \atlasA \times \N^\ast \to [0, \infty[
		: (\kappa, \ell) \mapsto \hn{ \LogPlusVR{V_\kappa}{\delta^L_\kappa}{\RMetLok{g}{\kappa}} }%
		{1_{V_\kappa \times \Ball{0}{\delta^L_\kappa}}}{\ell},
		\\
		B^3 : \atProd{\atlasA}{\widetilde{\atlasA}} \times \N \to [0, \infty[
		: ((\kappa, \phi), \ell) \mapsto
		\hn{ \rest{\FAbl{(\kappa \circ \phi^{-1})}}{\phi(\widetilde{U}_\kappa \cap \widetilde{U}_\phi)} }%
		{1_{\phi(\widetilde{U}_\kappa \cap \widetilde{U}_\phi)}}{\ell}.
	\end{gather*}
	We inductively define $\GewFunk_0 \ndef \GewFunk$, and
	if $\GewFunk_k$ is defined for $k \in \N$, we set
	\[
		\GewFunk_{k + 1}
		\ndef
		\bigcup_{f \in \GewFunk_k}\extMult{f}{B^1}
		\cup
		\bigcup_{f \in \GewFunk_k}\extMult{f}{B^2}
		\cup
		\bigcup_{f \in \GewFunk_k}\extMult{f}{B^3}
		;
	\]
	we defined $\extMult{f}{B^i}$ in \refer{lem:extension_weights_Multiplier}.
	Finally, we set $\GewFunk^e \ndef \bigcup_{k \in \N} \GewFunk_k$.
	Since we can show with an easy induction (using \refer{lem:extension_weights_Multiplier}, of course)
	that each $\GewFunk_k$ is locally $\GewFunk$-bounded, so is $\GewFunk^e$.
	Finally, we see with another application of \refer{lem:extension_weights_Multiplier}
	that \ref{ass1:log_exp_Abl-Mult} and \ref{ass1:chart_change_Abl-Mult} in \refer{def:saturated_adjusted-weights} are satisfied.

	We prove the minimality condition by induction. More precisely, we prove that
	$\GewFunk_k \sub \ExtWeights{\mathcal{G}}$ for all $k \in \N$.
	The case $k = 0$ is satisfied by the assumptions on $\mathcal{G}$.
	Suppose it holds for $k \in \N$, and let $f \in \GewFunk_k$. Because $\mathcal{G}$ is saturated,
	it satisfies \ref{multMFK} for $f$ and the $B \in \set{B^j}{j=1,2,3}$.
	Hence we derive from \ref{minimalMult} that $\extMult{f}{B^j} \sub \ExtWeights{\mathcal{G}}$ for $j \in \sset{1,2,3}$,
	so obviously $\GewFunk_{k + 1} \sub \ExtWeights{\mathcal{G}}$.
\end{proof}

\section{Diffeomorphisms on Riemannian manifolds}

We construct \emph{weighted diffeomorphisms} on Riemannian manifolds,
and turn them into a Lie group that is modelled on weighted vector fields.
In order to do this, we prove a criterion when the composition of the exponential function
with a vector field is a diffeomorphism.
Then, we can use the local group operations treated in \refer{lem:Composition_and_Inversion_smooth-patched_version}
to construct Lie group structures from local data.
We state the main result concerning these Lie groups in \refer{satz:Existenz_Liegruppe_Diffeos_MFK}.
Finally, we compare these Lie groups with other well-known Lie groups of diffeomorphisms.

\subsection{Generating diffeomorphisms from vector fields}
\label{susec:Diffeos_from_VecFields}

In \refer{lem:Exp-VF_Diffeo--lokal}, we established under which conditions
the map $\Rexp{\RMetLok{g}{\kappa}} \circ \VectFLok{X}{\kappa}$ is a diffeomorphism,
where $\kappa$ is a chart and $\VectFLok{X}{\kappa}$ a localized vector field.
We show that similar assumptions also allow the global behavior of $\Rexp{g} \circ X$ to be controlled.
\newcommand{\DiVe}[1]{\ensuremath{\phi_{#1}}}
\begin{prop}\label{prop:exVF-lokDiffeo-sur-estPreIm}
	Let $(M, g)$ be a Riemannian manifold
	and $\atlasA = \sset{\kappa : \widetilde{U}_\kappa \to U_\kappa }$ an atlas for $M$.
	For each $\kappa \in \atlasA$, let $r_\kappa > 0$ such that $\clBall{0}{r_\kappa} \sub U_\kappa$ and
	$\set{\kappa^{-1}(\Ball{0}{r_\kappa}) }{\kappa \in \atlasA}$ is a cover of $M$,
	$\nu_\kappa \in ]0, \grenzExp[\RMetLok{g}{\kappa}]{ \Ball{0}{r_\kappa} }{ U_\kappa } [$
	and $\eps_\kappa \in ]0, \frac{1}{2}[$.
	Further let $k \in \cl{\N}$ with $k \geq 1$
	and $X \in \VecFields[k]{M}$ such that for each $\kappa \in \atlasA$,
	\[
		\hn{\VectFLok{X}{\kappa} }{ 1_{\Ball{0}{r_\kappa}} }{0}
		< \min\Bigl(\tfrac{\eps_\kappa r_\kappa}{2 \BndFstAblRex[\RMetLok{g}{\kappa}]{ \Ball{0}{r_\kappa} }{\nu_\kappa}}, \nu_\kappa,
			\tfrac{\eps_\kappa}{4 (\BndSndAblRex[\RMetLok{g}{\kappa}]{\Ball{0}{r_\kappa}}{\nu_\kappa}  + 1) }\Bigr),
	\]
	and
	$\hn{\VectFLok{X}{\kappa} }{ 1_{\Ball{0}{r_\kappa}} }{1} < \frac{\eps_\kappa}{4} $.
	Then the following assertions hold:
	\begin{assertions}
		\item\label{ass1:ExpVF_locDiff}
		We have that $\image{X} \sub \domExpMax{g}$, the map $\DiVe{X} \ndef \Rexp{g} \circ X$ maps each connected component of $M$ into itself,
		and for each $\kappa \in \atlasA$, $\rest{\kappa \circ \DiVe{X} \circ \kappa^{-1} }{ \Ball{0}{r_\kappa} }$
		is a $\ConDiff{}{}{k}$-diffeomorphism whose image contains $\Ball{0}{r_\kappa(1 - 2 \eps_\kappa)}$.

		\item\label{ass1:OnThePreimage_of_ExpVF}
		For each $y \in M$,
		$
				\# (\Rexp{g} \circ X)^{-1}(y) \leq \# \atlasA_y
		$,
		where $\atlasA_y \ndef \set{\kappa \in \atlasA}{y \in \widetilde{U}_\kappa}$.
	\end{assertions}
	In addition, assume that $M$ is connected, $\set{\widetilde{U}_\kappa}{\kappa \in \atlasA}$ is a locally finite cover of $M$
	and $\set{\kappa^{-1}( \Ball{0}{\widetilde{r_\kappa} } ) }{\kappa \in \atlasA}$ is also a cover of $M$,
	where each $\widetilde{r_\kappa} < (1 -  \eps_\kappa) r_\kappa$. Then
	\begin{assertions}[resume]
		\item\label{ass1:ExpVF_properMap}
		$\Rexp{g} \circ X$ is a proper map.
	\end{assertions}
	Assume that each $\widetilde{r_\kappa} < (1 -  2 \eps_\kappa) r_\kappa$. Then
	\begin{assertions}[resume]
		\item\label{ass1:ExpVF_covering_finiteSheets}
		$\Rexp{g} \circ X$ is a covering map with finitely many sheets.
	\end{assertions}
	Finally, assume that there exists a point in $M$ that is only contained in one $\widetilde{U}_\kappa$. Then
	\begin{assertions}[resume]
		\item\label{ass1:ExpVF_globDiffeo}
		$\Rexp{g} \circ X$ is a diffeomorphism.
	\end{assertions}
\end{prop}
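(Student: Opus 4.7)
The plan is to lift every assertion back to the Euclidean results of
\refer{lem:Exp-VF_Diffeo--lokal} and \refer{lem:Naehe_expVF-id_C0}
via the identity
$\kappa\circ\DiVe{X}\circ\kappa^{-1}=\Rexp{\RMetLok{g}{\kappa}}\circ(\id{\Ball{0}{r_\kappa}},\VectFLok{X}{\kappa})$
supplied by \refer{lem:SP_mitRexp-Vergleich_lokal_global}. The smallness assumption
$\hn{\VectFLok{X}{\kappa}}{1_{\Ball{0}{r_\kappa}}}{0}<\nu_\kappa$ together with the definition of
$\grenzExp[\RMetLok{g}{\kappa}]{\Ball{0}{r_\kappa}}{U_\kappa}$ immediately forces
$\image{X}\sub\domExpMax{g}$, so $\DiVe{X}$ is globally defined. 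For \ref{ass1:ExpVF_locDiff},
the bounds placed on $\VectFLok{X}{\kappa}$ are exactly those of
\refer{lem:Exp-VF_Diffeo--lokal}, which yields the local $\ConDiff{}{}{k}$-diffeomorphism property
and the image inclusion containing $\Ball{0}{r_\kappa(1-2\eps_\kappa)}$.
Component-preservation will follow from the observation that $[0,1]\cdot X(x)$ lies in
$\domExpMax{g}\cap\Tang[x]{M}$ (by star-shapedness of that fibre at $0$), so that
$t\mapsto\Rexp{g}(t\,X(x))$ is a continuous arc in $M$ joining $x$ and $\DiVe{X}(x)$.

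For \ref{ass1:OnThePreimage_of_ExpVF}, given $x\in\DiVe{X}^{-1}(y)$ I would pick a chart $\kappa$ with
$x\in\kappa^{-1}(\Ball{0}{r_\kappa})$ and apply \refer{lem:Naehe_expVF-id_C0} together with
the stated bound on $\hn{\VectFLok{X}{\kappa}}{1_{\Ball{0}{r_\kappa}}}{0}$ to conclude
$\norm{\kappa(y)-\kappa(x)}<\tfrac{\eps_\kappa r_\kappa}{2}$, so that $y\in\widetilde{U}_\kappa$
and hence $\kappa\in\atlasA_y$. Since the local restriction
$\rest{\DiVe{X}}{\kappa^{-1}(\Ball{0}{r_\kappa})}$ is injective by \ref{ass1:ExpVF_locDiff},
at most one preimage of $y$ can live in each such chart, giving the bound $\#\DiVe{X}^{-1}(y)\leq\#\atlasA_y$.

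For \ref{ass1:ExpVF_properMap}, let $K\sub M$ be compact. Local finiteness of
$\famiAtl{\widetilde{U}_\kappa}{\atlasA}$ singles out finitely many charts
$\kappa_1,\dotsc,\kappa_n$ whose domains meet $K$. For $x\in\DiVe{X}^{-1}(K)$ with
$x\in\kappa^{-1}(\Ball{0}{\widetilde{r_\kappa}})$, the hypothesis
$\widetilde{r_\kappa}<(1-\eps_\kappa)r_\kappa$ together with \refer{lem:Naehe_expVF-id_C0}
keeps $\kappa(\DiVe{X}(x))$ inside $\Ball{0}{r_\kappa}$, so $\DiVe{X}(x)\in\widetilde{U}_\kappa\cap K$
and therefore $\kappa\in\sset{\kappa_1,\dotsc,\kappa_n}$. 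Hence
$\DiVe{X}^{-1}(K)\sub\bigcup_{i=1}^n\kappa_i^{-1}(\clBall{0}{\widetilde{r_{\kappa_i}}})$,
which is compact; as $\DiVe{X}^{-1}(K)$ is closed in $M$, it is itself compact.

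For \ref{ass1:ExpVF_covering_finiteSheets}, the strengthened bound
$\widetilde{r_\kappa}<(1-2\eps_\kappa)r_\kappa$ together with the image inclusion in \ref{ass1:ExpVF_locDiff}
yields $\kappa^{-1}(\Ball{0}{\widetilde{r_\kappa}})\sub\image{\DiVe{X}}$ for every $\kappa$,
so $\DiVe{X}$ is surjective. A proper surjective local diffeomorphism between connected manifolds
is a covering map by standard covering-space generalities, and
\ref{ass1:OnThePreimage_of_ExpVF} bounds the (then necessarily constant) sheet count by $\#\atlasA_y$
for any chosen $y$, which is finite by local finiteness. Assertion \ref{ass1:ExpVF_globDiffeo}
is then immediate: the existence of $y_0$ with $\#\atlasA_{y_0}=1$ forces the sheet number to equal $1$,
so $\DiVe{X}$ is bijective and hence a diffeomorphism. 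The main obstacle will be the bookkeeping in
\ref{ass1:OnThePreimage_of_ExpVF} and \ref{ass1:ExpVF_properMap}: one must trade the three parameters
$r_\kappa$, $\widetilde{r_\kappa}$ and $\eps_\kappa$ so that $\DiVe{X}$ moves each point in
$\kappa^{-1}(\Ball{0}{\widetilde{r_\kappa}})$ by at most $\tfrac{\eps_\kappa r_\kappa}{2}$ in the chart,
which is exactly what is needed to keep its image inside $\widetilde{U}_\kappa$.
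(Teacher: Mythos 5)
Your proof follows the paper's own argument essentially step for step: the same reduction to \refer{lem:Exp-VF_Diffeo--lokal} and \refer{lem:Naehe_expVF-id_C0} via \refer{lem:SP_mitRexp-Vergleich_lokal_global} for \ref{ass1:ExpVF_locDiff}, the same chart-counting for \ref{ass1:OnThePreimage_of_ExpVF}, the same displacement estimate for properness (your compact superset $\bigcup_i \kappa_i^{-1}(\clBall{0}{\widetilde{r_{\kappa_i}}})$ is a slightly cleaner choice than the paper's $\widetilde{K}$, and your explicit arc $t\mapsto\Rexp{g}(t\,X(x))$ fills in the component-preservation step the paper calls obvious), and the same covering-map conclusion. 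The only imprecision is in \ref{ass1:OnThePreimage_of_ExpVF}: the displacement bound $\norm{\kappa(y)-\kappa(x)}<\tfrac{\eps_\kappa r_\kappa}{2}$ by itself only yields $\norm{\kappa(y)}<r_\kappa(1+\tfrac{\eps_\kappa}{2})$, and nothing guarantees that this larger ball lies in $U_\kappa$; the containment $y\in\widetilde{U}_\kappa$ should instead be deduced from $\hn{\VectFLok{X}{\kappa}}{1_{\Ball{0}{r_\kappa}}}{0}<\nu_\kappa<\grenzExp[\RMetLok{g}{\kappa}]{\Ball{0}{r_\kappa}}{U_\kappa}$ and the definition of $\grenzExp{}{}$, which forces $\Rexp{\RMetLok{g}{\kappa}}(\clBall{0}{r_\kappa}\times\Ball{0}{\nu_\kappa})\sub U_\kappa$ — this is how the paper argues, and it uses only hypotheses you have already invoked.
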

\begin{proof}
	\ref{ass1:ExpVF_locDiff}
	Since $\hn{\VectFLok{X}{\kappa} }{ 1_{\Ball{0}{r_\kappa}} }{0} < \nu_\kappa$ for each $\kappa \in \atlasA$,
	we see with \refer{bem:Riemann-Geometrie_Karte} that
	\[
		\Tang{\kappa^{-1}}(( \id{U_\kappa}, \VectFLok{X}{\kappa}))(\Ball{0}{r_\kappa})
		\sub \Tang{\kappa^{-1}}(\domExpMax{\RMetLok{g}{\kappa}}) \sub \domExpMax{g}.
	\]
	It is obvious from the definition of $\Rexp{g}$ that $\DiVe{X}$ maps each connected component of $M$ into itself.
	By our assumptions, for each $\kappa \in \atlasA$ we can apply \refer{lem:Exp-VF_Diffeo--lokal} to
	the function $\VectFLok{X}{\kappa}$ and the exponential function $\Rexp{\RMetLok{g}{\kappa}}$
	to see that
	$\rest{\DiVe{\VectFLok{X}{\kappa}} }{ \Ball{0}{r_\kappa} }$
	is a $\ConDiff{}{}{k}$-diffeomorphism whose image contains $\Ball{0}{r_\kappa(1 - 2 \eps_\kappa)}$
	(here $\DiVe{\VectFLok{X}{\kappa}} \ndef \Rexp{\RMetLok{g}{\kappa}} \circ (\id{U_\kappa}, \VectFLok{X}{\kappa})$).
	Since
	$\rest{\kappa \circ \DiVe{X} \circ \kappa^{-1} }{ \Ball{0}{r_\kappa} }
		= \rest{\DiVe{\VectFLok{X}{\kappa}} }{ \Ball{0}{r_\kappa} }$
	by \refer{lem:SP_mitRexp-Vergleich_lokal_global}, the assertion holds.

	\ref{ass1:OnThePreimage_of_ExpVF}
	Let $y \in M$. For $\kappa \in \atlasA$,  we set $W_\kappa \ndef \kappa^{-1}(\Ball{0}{r_\kappa})$ and define
	\[
		\atlasA_{X, y} \ndef \set{\kappa \in \atlasA}{(\exists x \in W_\kappa) \, \DiVe{X}(x) = y}.
	\]
	Since the map $\rest{\DiVe{X}}{W_\kappa}$ is injective for each $\kappa \in \atlasA_{X, y}$,
	there exists at most one $x_\kappa \in W_\kappa$ with $\DiVe{X}(x_\kappa) = y$.
	Further $y = \DiVe{X}(x_\kappa) \in \DiVe{X}(W_\kappa) \sub \widetilde{U}_\kappa$
	(since $\hn{\VectFLok{X}{\kappa}}{1_{\Ball{0}{r_\kappa}}}{0} < \nu_\kappa$), hence $\atlasA_{X, y} \sub \atlasA_y$.
	The map $\atlasA_{X, y} \to \DiVe{X}^{-1}(y) : \kappa \mapsto x_\kappa$ is surjective because
	$\set{W_\kappa}{\kappa \in \atlasA}$ is a cover of $M$, so we derive the assertion.

	\ref{ass1:ExpVF_properMap}
	Let $K \sub M$ be a compact set. Since $\set{\widetilde{U}_\kappa}{\kappa \in \atlasA}$ is a locally finite cover,
	using a straightforward compactness argument we can show that there exists
	a finite set $F \sub \atlasA$ such that for $\kappa \in \atlasA$ the equivalence
	\[
		\widetilde{U}_\kappa \cap K \neq \emptyset
		\iff
		\kappa \in F
	\]
	holds. We then define
	\[
		\widetilde{K} \ndef
		\bigcup_{\kappa \in F}
		\kappa^{-1}\bigl( ( \kappa(K) \cap \clBall{0}{\widetilde{r_\kappa} + \frac{\eps_\kappa r_\kappa}{2} } )+ \clBall{0}{ \frac{\eps_\kappa r_\kappa}{2} } \bigr).
	\]
	This is a compact set and we prove that it contains $\DiVe{X}^{-1}(K)$.
	To this end, let $y \in \DiVe{X}^{-1}(K)$.
	Then there exists $\kappa \in \atlasA$ such that $y \in \kappa^{-1}(\Ball{0}{ \widetilde{r_\kappa} })$, and by our assumptions on $X$,
	we have that $\DiVe{X}(y) \in \widetilde{U}_\kappa$, hence $\kappa \in F$.
	Further, using \refer{lem:Naehe_expVF-id_C0} we get
	\begin{equation*}
		\norm{\kappa(\DiVe{X}(y)) - \kappa(y)}
		= \norm{\DiVe{\VectFLok{X}{\kappa}}(\kappa(y)) - \kappa(y)}
		\leq \frac{\eps_\kappa r_\kappa}{2}.
	\end{equation*}
	This implies that
	\[
		\norm{\kappa(\DiVe{X}(y)) }
		\leq \norm{\kappa(y) } + \norm{\kappa(\DiVe{X}(y) )  - \kappa(y) }
		< \widetilde{r_\kappa} + \frac{\eps_\kappa r_\kappa}{2}.
	\]
	So we see that
	\[
		\kappa(y) = \kappa(\DiVe{X}(y) ) +  \kappa(y) - \kappa(\DiVe{X}(y))
		\in \kappa(K) \cap \clBall{0}{\widetilde{r_\kappa} + \frac{\eps_\kappa r_\kappa}{2} } + \clBall{0}{ \frac{\eps_\kappa r_\kappa}{2} },
	\]
	which shows that $y \in \widetilde{K}$.

	\ref{ass1:ExpVF_covering_finiteSheets}
	$\DiVe{X}$ is surjective since the image of $\rest{\DiVe{X}}{\kappa^{-1}(\Ball{0}{r_\kappa}) }$
	contains $\kappa^{-1}( \Ball{0}{ (1 -  2 \eps_\kappa) r_\kappa} )$ by \ref{ass1:ExpVF_locDiff}, and these sets cover $M$ by assumption.
	Since we also proved in \ref{ass1:ExpVF_locDiff} that $\DiVe{X}$ is a local homeomorphism
	and is a proper map by \ref{ass1:ExpVF_properMap},
	we can use \cite[Theorem 4.22]{MR648106} to see that it is a covering map.

	\ref{ass1:ExpVF_globDiffeo}
	We showed in \ref{ass1:ExpVF_locDiff} that $\DiVe{X}$ is a local diffeomorphism,
	and by \ref{ass1:ExpVF_covering_finiteSheets} it is a covering map.
	We see with the hypothesis of \ref{ass1:ExpVF_globDiffeo}
	and the assertion of \ref{ass1:OnThePreimage_of_ExpVF} that it has only one sheet,
	so it is a bijection and hence a diffeomorphism.
\end{proof}

\subsection{Lie groups of weighted diffeomorphisms}

We show that on a Riemannian manifold,
for each locally finite, \emph{adapted} atlas $\atlasA$ (we will introduce this terminology soon)
and each set $\GewFunk$ of weights containing $1_M$,
there exists a Lie group of \emph{weighted diffeomorphisms}.
The Lie group is modelled on the space $\CFM{M}{\GewFunk^e}{\infty}{\atlasA}$
of weighted vector fields, where $\GewFunk^e$ is a minimal saturated extension of $\GewFunk \cup \sset{\omega}$,
where $\omega$ is a suitable adjusted weight.

We then examine under which conditions the compactly supported diffeomorphisms
are a subset of the weighted diffeomorphisms,
and see that if the manifold is $\R^d$ with the scalar product,
the weighted diffeomorphisms constructed here are the same as in \cite[§4]{MR2952176}.

\subsubsection{Lie groups modelled on weighted vector fields}

We first transfer the results of \refer{lem:Composition_and_Inversion_smooth-patched_version}
to weighted vector fields.
For the inversion, before the introduction of \refer{prop:exVF-lokDiffeo-sur-estPreIm}
this was not possible since we had only developed criteria for local invertibility.
Further, we use these results to construct a Lie group modelled on weighted vector fields.
Note that we assume the existence of suitable weights, but even with \refer{lem:construction_adjusted_weight_MFK}
it is not clear that adjusting weights that satisfy \ref{est:adjustWeights_exp-log} exist.

Before we begin, we make the following definition.
\begin{defi}\label{def:adapted_atlas}
	Let $d \in \N^*$, $M$ a $d$-dimensional manifold,
	$\atlasA = \sset{\kappa : \widetilde{U}_\kappa \to U_\kappa}$ an atlas for $M$,
	$\famiAtl{r_\kappa}{\atlasA}$, $\famiAtl{\eps_\kappa}{\atlasA}$ families of positive real numbers and $R > 0$.

	We call $\atlasA$ \emph{adapted to $(\famiAtl{r_\kappa}{\atlasA}, \famiAtl{\eps_\kappa}{\atlasA}, R)$} if
	$\clBall{0}{r_\kappa + R} \sub U_\kappa$ for all $\kappa \in \atlasA$,
	$\famiAtl{\kappa^{-1}(\Ball{0}{r_\kappa})}{\atlasA}$ is a cover of $M$ and
	$r_\kappa < \bigl(\tfrac{1}{ 2\eps_\kappa} - 1\bigr) R$ for all $\kappa \in \atlasA$.
	Note that this implies that each $\eps_\kappa <\tfrac{1}{2}$.
	Sometimes, we may call such an atlas $\atlasA$ just adapted.
\end{defi}

\begin{bem}
	Note that on a manifold with a countable base every atlas is adapted,
	see \cite[Theorem 3.3]{MR1931083}.
\end{bem}

\begin{lem}\label{lem:Liegrupp_DiffeosMFK_geeigneteGewichte}
	Let $d \in \N^*$, $(M, g)$ a $d$-dimensional connected Riemannian manifold,
	$\atlasA = \sset{\kappa : \widetilde{U}_\kappa \to U_\kappa}$ a locally finite atlas for $M$,
	$R > 0$ and $\famiAtl{r_\kappa}{\atlasA}$, $\famiAtl{\eps_\kappa}{\atlasA}$ families of positive real numbers
	such that $\atlasA$ is adapted to $(\famiAtl{r_\kappa}{\atlasA}, \famiAtl{\eps_\kappa}{\atlasA}, R)$
	and $\eps \ndef \inf_{\kappa \in \atlasA} \eps_\kappa > 0$.

	We then set $V_\kappa \ndef \Ball{0}{r_\kappa + R}$,
	$\atlasB \ndef \set{\rest[V_\kappa]{\kappa}{\kappa^{-1}(V_\kappa)}}{\kappa \in \atlasA}$
	and $\atlasC \ndef \set{\rest[\Ball{0}{r_\kappa}]{\kappa}{\kappa^{-1}(\Ball{0}{r_\kappa})}}{\kappa \in \atlasA}$.
	Further, for each $\kappa \in \atlasA$, let $\delta^E_\kappa \in]0, \grenzExp[\RMetLok{g}{\kappa}]{V_\kappa}{U_\kappa}[$
	and $\delta^L_\kappa \in]0, \grenzLog[\RMetLok{g}{\kappa}]{V_\kappa}{U_\kappa}[$.
	Let $\GewFunk \sub \cl{\R}^M$ contain weights $\omega^E$, $\omega^L$ such that $\weightAtl{\omega^E}{\atlasB}$
	is an adjusting weight for
	\begin{equation}
		\tag{\ensuremath{\dagger}}\label{adjusting_weight_to-local_group}
		\left\famiAtl{
			\min\Bigl(\delta^E_\kappa,
			\tfrac{\min(\eps_\kappa r_\kappa, 1)}{2 \BndFstAblRex[\RMetLok{g}{\kappa}]{ V_\kappa }{\delta^E_\kappa}},
			\tfrac{\eps_\kappa}{4 (\BndSndAblRex[\RMetLok{g}{\kappa}]{ V_\kappa }{\delta^E_\kappa}  + 1) }
			\Bigr)
		\right}{\atlasA};
	\end{equation}
	$\weightAtl{\omega^L}{\atlasB}$ is an adjusting weight for $\famiAtl{\delta^L_\kappa}{\atlasA}$,
	and \ref{est:adjustWeights_exp-log} is satisfied for $\omega^L$ and $\omega^E$.
	Further, assume that $\WeightsAtl{\GewFunk}{\atlasB}$ satisfies \ref{cond:est_SP-Abb_weights}
	for $\famiAtl{ \ExpMinusLok{V_\kappa}{\delta^E_\kappa}{\RMetLok{g}{\kappa}} }{\atlasA}$
	and $\famiAtl{ \LogPlusVR{V_\kappa}{\delta^L_\kappa}{\RMetLok{g}{\kappa}} }{\atlasA}$,
	respectively.
	\begin{assertions}
		\item\label{ass:lokale_Kompo_VF}
		Then the map
		\begin{equation*}
			C_{\VecFields{M}}
			:
			D_1^\atlasB \times D_2^\atlasB \to R^\atlasC
			:
			(X, Y) \mapsto
			\Rlog{g} \circ(\id{M}, (\Rexp{g} \circ X) \circ  (\Rexp{g} \circ Y))
		\end{equation*}
		is defined and smooth, where
		\begin{equation*}
			D_1^\atlasB \ndef \set{X \in \CcFM{M}{\infty}{\atlasB} }%
			{\hnM{X}{\omega^E}{0}{\atlasB} < \tfrac{1}{2}
			\text{ and }
			\hnM{X}{1_M}{1}{\atlasB} < \tfrac{1}{2}
			}
		\end{equation*}
		and
		\[
			D_2^\atlasB \ndef \set{X \in \CcFM{M}{\infty}{\atlasB} }%
			{\hnM{X}{\omega^E}{0}{\atlasB} < \min(\tfrac{1}{4}, R)}
		\]
		and
		\[
			R^\atlasC \ndef \set{X \in \CcFM{M}{\infty}{\atlasC} }{ \hnM{X}{\omega^L}{0}{\atlasC} < 1}.
		\]
	\end{assertions}
	Assume that there exists a point in $M$ that is contained in only one $\widetilde{U}_\kappa$.
	\begin{assertions}[resume]
		\item\label{ass:lokale_Inv_VF}
		Then for each $\rho \in ]0,1[$, the map
		\[
			I_{\VecFields{M}}
			:
			D_{\rho}^\atlasB \to R_\rho^\atlasC
			:
			X \mapsto \Rlog{g} \circ(\id{M}, (\Rexp{g} \circ X)^{-1} )
		\]
		is defined and smooth, where
		\begin{equation*}
			D_{\rho}^\atlasB \ndef \set{X \in \CcFM{M}{\infty}{\atlasB} }%
			{
			\hnM{X}{\omega^E}{0}{\atlasB} < \tfrac{(1 - \rho)\min(\rho, R)}{2}
			,\,
			\hnM{X}{1_M}{1}{\atlasB} < \min(\tfrac{\rho}{2}, \tfrac{\eps}{4})
			}
		\end{equation*}
		and
		\[
			R_\rho^\atlasC \ndef \set{ X \in \CcFM{M}{\infty}{\atlasC}}%
			{ \hnM{X}{\omega^L}{0}{\atlasC} < \tfrac{\min(R, \rho)}{2} }.
		\]
	\end{assertions}
	We set
	\begin{equation*}
		\mathcal{D}_D \ndef
		\set{\Rexp{g} \circ\, X }%
					{X \in D_1^\atlasB \cap D_2^\atlasB \cap D_{\rho}^\atlasB},
	\end{equation*}
	and assume that \ref{cond:est_sim-multiplier_weights} is satisfied for
	$\WeightsAtl{\GewFunk}{ \atCap{\atlasC}{\atlasB} }$ and
	$
		\fami{\rest{\FAbl{(\kappa \circ \phi^{-1})}}{\phi(\widetilde{U}_\kappa \cap \widetilde{U}_\phi)} }{(\kappa, \phi)}{ \atProd{\atlasB}{\atlasC}}
	$.
	\begin{assertions}[resume]
		\item\label{ass:Lie_group}
		Then there exists a Lie group structure on the subgroup of $\Diff{M}{}{}$
		generated by
		\[
			\mathcal{D}_D \cap \mathcal{D}_D^{-1}.
		\]
		The restriction of the map
		\[
			\mathcal{L}
			: \mathcal{D}_D \to \CcFM{M}{\infty}{\atlasB}
			: \phi \mapsto \Rlog{g} \circ (\id{M}, \phi)
		\]
		is a chart for this set.
	\end{assertions}
\end{lem}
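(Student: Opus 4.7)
The common strategy for all three assertions is to transport results already established at the level of restricted products of function spaces (in particular \refer{lem:Composition_and_Inversion_smooth-patched_version}) to the manifold level, via the closed linear embeddings $\embMcWatl{\atlasB}$ and $\embMcWatl{\atlasC}$ provided by \refer{lem:WeiVF_closed_image}. The numerical bounds in the hypotheses of the current lemma have been chosen precisely so that the quantitative assumptions of that earlier result are directly available.

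For \ref{ass:lokale_Kompo_VF}, one first checks that $\embMcWatl{\atlasB}(X) \in D_1$ and $\embMcWatl{\atlasB}(Y) \in D_2$ of \refer{lem:Composition_and_Inversion_smooth-patched_version}\ref{ass:patched_Compo_Rexplog} whenever $X \in D_1^\atlasB$ and $Y \in D_2^\atlasB$; this is immediate from the matching shapes of the two neighbourhoods. The identity \refer{id:Lokale_Darstellung_Komposition_durch_glatte_Abb}, combined with \refer{lem:SP_mitRexp-Vergleich_lokal_global}, then shows that the chart-$\kappa$ localization of the vector field $\Rlog{g} \circ (\id{M}, (\Rexp{g} \circ X) \circ (\Rexp{g} \circ Y))$ equals the $\kappa$-coordinate of $\mathbf{C}_E^L(\embMcWatl{\atlasB}(X), \embMcWatl{\atlasB}(Y))$. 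Consequently, $C_{\VecFields{M}}(X,Y)$ is a well-defined smooth vector field on $M$; the bound \refer{est:log0-normCompo} places its localizations inside the ball defining $R^\atlasC$; and the factorization $\embMcWatl{\atlasC} \circ C_{\VecFields{M}} = \mathbf{C}_E^L \circ (\embMcWatl{\atlasB} \times \embMcWatl{\atlasB})$ gives smoothness of $C_{\VecFields{M}}$ by the closed-embedding property.

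For \ref{ass:lokale_Inv_VF}, the plan is analogous, but additionally requires global invertibility of $\Rexp{g} \circ X$. The added constraint $\hnM{X}{1_M}{1}{\atlasB} < \eps/4$, together with the $\omega^E$-bound and the fact that $\weightAtl{\omega^E}{\atlasB}$ adjusts to \refer{adjusting_weight_to-local_group}, ensures via \refer{est:1-0-norm_f-0-norm_spezielles-f} that each localization $\VectFLok{X}{\kappa}$ fulfils the hypotheses of \refer{prop:exVF-lokDiffeo-sur-estPreIm}. Since some point of $M$ lies in only one chart domain, \refer{prop:exVF-lokDiffeo-sur-estPreIm}\ref{ass1:ExpVF_globDiffeo} shows that $\Rexp{g} \circ X$ is a diffeomorphism of $M$, so $(\Rexp{g} \circ X)^{-1}$ exists as a smooth self-map. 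The identity \refer{id:Lokale_Darstellung_Inversion_durch_glatte_Abb} then identifies $\embMcWatl{\atlasC} \circ I_{\VecFields{M}}$ with $\mathbf{I}_E^L \circ \embMcWatl{\atlasB}$ from \refer{lem:Composition_and_Inversion_smooth-patched_version}\ref{ass:patched_Inversion_Rexplog}, yielding smoothness of $I_{\VecFields{M}}$ and the image bound via \refer{est:log0-normInv}.

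For \ref{ass:Lie_group}, I would invoke the standard theorem producing a Lie group from local data around the neutral element (in the spirit of \cite{MR2261066}), which requires local smoothness of multiplication, inversion, and conjugation with respect to a chart around $\id{M}$. By the assumed multiplier condition \refer{cond:est_sim-multiplier_weights} for the transition derivatives between $\atlasB$ and $\atlasC$, \refer{cor:gewVF-verschiedeneAtlanten-Chart_Changes_Weights} identifies $\CcFM{M}{\infty}{\atlasA}$, $\CcFM{M}{\infty}{\atlasB}$ and $\CcFM{M}{\infty}{\atlasC}$ as the same locally convex space, so the domains and codomains in parts \ref{ass:lokale_Kompo_VF} and \ref{ass:lokale_Inv_VF} match under $\mathcal{L}$, yielding local smoothness of multiplication and inversion on the symmetric neighbourhood $\mathcal{D}_D \cap \mathcal{D}_D^{-1}$. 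The map $\mathcal{L}$ is a set-theoretic bijection onto an open $0$-neighbourhood with inverse $X \mapsto \Rexp{g} \circ X$. The main obstacle will be the smoothness of conjugation on the full generated subgroup: for each fixed $\psi$ in the group, one must verify that $\phi \mapsto \psi \phi \psi^{-1}$ is smooth in the $\mathcal{L}$-chart near $\id{M}$, which reduces to parameter-dependent variants of parts \ref{ass:lokale_Kompo_VF} and \ref{ass:lokale_Inv_VF} together with careful bookkeeping of the neighbourhoods that depend on $\psi$; once this is in place, the rest of the construction is routine.
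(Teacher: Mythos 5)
Your treatment of \ref{ass:lokale_Kompo_VF} and \ref{ass:lokale_Inv_VF} coincides with the paper's: factor $C_{\VecFields{M}}$ and $I_{\VecFields{M}}$ through $\mathbf{C}_E^L$ and $\mathbf{I}_E^L$ of \refer{lem:Composition_and_Inversion_smooth-patched_version} via the closed embeddings of \refer{lem:WeiVF_closed_image}, use the identities \ref{id:Lokale_Darstellung_Komposition_durch_glatte_Abb} and \ref{id:Lokale_Darstellung_Inversion_durch_glatte_Abb} to match localizations, and obtain global invertibility of $\Rexp{g}\circ X$ from \refer{prop:exVF-lokDiffeo-sur-estPreIm}\ref{ass1:ExpVF_globDiffeo}. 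These parts are fine.

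Part \ref{ass:Lie_group} is where your proposal falls short of a proof. First, you declare conjugation smoothness to be ``the main obstacle'' and leave it as bookkeeping to be done later; an admitted unresolved obstacle is a gap, not a proof. The paper does not confront conjugation at all: it reduces \ref{ass:Lie_group} to the hypotheses of the local-data theorem [La.~B.2.5] of \cite{MR2952176}, which are exactly that $\mathcal{D}_D \cap \mathcal{D}_D^{-1}$ is a symmetric open set on which inversion is smooth and from which composition maps smoothly into $\Rexp{g}\circ R^{\atlasC}\cap\Diff{M}{}{}$ — all of which follow from parts \ref{ass:lokale_Kompo_VF} and \ref{ass:lokale_Inv_VF}. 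Second, and more substantively, you skip the one genuinely nontrivial verification the paper carries out: that $\mathcal{D}_D\cap\mathcal{D}_D^{-1}$ is \emph{open} in the chart $\mathcal{L}$. This is not automatic, since $\mathcal{D}_D^{-1}$ is defined group-theoretically. The paper proves the set identity $\mathcal{D}_D \cap \mathcal{D}_D^{-1} = \mathcal{D}_D \cap \mathcal{L}^{-1}\bigl(I_{\VecFields{M}}^{-1}(\mathcal{L}(\mathcal{D}_D))\bigr)$ and then uses \refer{cor:gewVF-verschiedeneAtlanten-Chart_Changes_Weights} (note: the stated multiplier hypothesis only identifies $\CcFM{M}{\infty}{\atlasB}$ with $\CcFM{M}{\infty}{\atlasC}$, not with $\CcFM{M}{\infty}{\atlasA}$ as you claim) to conclude that $\mathcal{L}(\mathcal{D}_D)$ is open in $\CcFM{M}{\infty}{\atlasC}$, hence $I_{\VecFields{M}}^{-1}(\mathcal{L}(\mathcal{D}_D))$ is open in $\CcFM{M}{\infty}{\atlasB}$ by continuity of $I_{\VecFields{M}}$. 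You should also verify, as the paper does via \refer{lem:Naehe_expVF-id_C0} and \ref{est:adjustWeights_exp-log}, that $\mathcal{L}$ is well defined on $\mathcal{D}_D$, i.e.\ that $(\id{M},\Rexp{g}\circ X)$ actually lands in $\domLogMax{g}$.
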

\begin{proof}
	\ref{ass:lokale_Kompo_VF}
	Using \refer{lem:Composition_and_Inversion_smooth-patched_version}
	and \ref{id:Lokale_Darstellung_Komposition_durch_glatte_Abb} (together with \refer{lem:Log-exp_unter_Kartenwechsel}),
	we get the commutative diagram
	\[
		\xymatrix{
		{D_1^\atlasB \times D_2^\atlasB} \ar@{>->}[rr]^-{\embMcWatl{\atlasB} \times \embMcWatl{\atlasB}} \ar@{->}[d]|{C_{\VecFields{M}}} && {D_1 \times D_2} \ar[d]|{\mathbf{C}_E^L}
		\\
		{R^\atlasC} \ar@{>->}[rr]^-{\embMcWatl{\atlasC}} && {R}
		.
		}
	\]
	In particular,
	$\image{\mathbf{C}_E^L \circ (\embMcWatl{\atlasB} \times \embMcWatl{\atlasB})} \sub \image{\embMcWatl{\atlasC}}$,
	and the corestriction of $\mathbf{C}_E^L \circ (\embMcWatl{\atlasB} \times \embMcWatl{\atlasB})$
	to $\image{\embMcWatl{\atlasC}}$ is smooth
	by \cite[Prop.~A.1.12]{MR2952176}
	since we proved in \refer{lem:WeiVF_closed_image} the vector fields are a closed subset of the product.
	Since $\embMcWatl{\atlasC}$ is an embedding,
	this proves our assertion that $C_{\VecFields{M}}$ is defined and smooth.

	\ref{ass:lokale_Inv_VF}
	We know from \refer{prop:exVF-lokDiffeo-sur-estPreIm} that for all
	$X \in \CcFM{M}{\infty}{\atlasC}$ with
	$
		\hnM{X}{\omega}{0}{\atlasB} < 1$ and $\hnM{X}{1_M}{1}{\atlasB} < \tfrac{\eps}{4}
	$,
	the map $\Rexp{g} \circ \, X$ is a diffeomorphism.
	We can apply \ref{prop:exVF-lokDiffeo-sur-estPreIm}
	since $r_\kappa < (1 - 2\eps_\kappa)(r_\kappa + R)$ (that is shown with a short calculation),
	and using our assumptions on $\omega$ stated in \ref{adjusting_weight_to-local_group},
	together with \ref{est:1-0-norm_f-0-norm_spezielles-f}.
	\\
	The rest of the proof follows along the same lines as \ref{ass:lokale_Kompo_VF}.

	\ref{ass:Lie_group}
	We calculate using \refer{lem:Naehe_expVF-id_C0} and \ref{est:adjustWeights_exp-log}
	that for all $X \in  D_1^\atlasB \cap D_2^\atlasB \cap D_{\rho}^\atlasB$ and $\kappa \in \atlasA$, we have
	\[
		\hn{ \ExpIdCWfak{V_\kappa}{\delta_\kappa}(\VectFLok{X}{\kappa})}{\weightLok{\omega^L}{\kappa}}{0}
		\leq \hn{ \VectFLok{X}{\kappa}}{\weightLok{\omega^E}{\kappa}}{0}
		< 1.
	\]
	Since $\omega^L$ is adjusting for $\delta^L$, we know from this estimate that we can apply
	$\Rlog{g}$ to $(\id{M}, \Rexp{g} \circ X)$, so $\mathcal{L}$ is well-defined.

	At the next step, we show that $\mathcal{D}_D \cap \mathcal{D}_D^{-1}
	= \mathcal{D}_D \cap \mathcal{L}^{-1}(I_{\VecFields{M}}^{-1}( \mathcal{L}( \mathcal{D}_D) ))$.
	To this end, let $\phi \in \mathcal{D}_D \cap \mathcal{D}_D^{-1}$. Then there exists $\psi \in \mathcal{D}_D$
	such that $\phi^{-1} = \psi$, and $X, Y \in \mathcal{L}(\mathcal{D}_D)$ with $\phi = \Rexp{g} \circ X$, $\psi = \Rexp{g} \circ Y$.
	Then
	\[
		Y = \Rlog{g} \circ (\id{M}, \psi) = \Rlog{g} \circ (\id{M}, (\Rexp{g} \circ X)^{-1}) = I_{\VecFields{M}}(X).
	\]
	Hence $X \in I_{\VecFields{M}}^{-1}( \mathcal{L}( \mathcal{D}_D) )$
	(note that we used that $\mathcal{L}( \mathcal{D}_D) \sub D_{\rho}^\atlasB \sub R_\rho^\atlasC$),
	and $\phi \in \mathcal{L}^{-1}(I_{\VecFields{M}}^{-1}( \mathcal{L}( \mathcal{D}_D) ))$.
	On the other hand, if $\phi \in \mathcal{D}_D$ such that
	$\mathcal{L}( \mathcal{D}_D) \in I_{\VecFields{M}}^{-1}( \mathcal{L}( \mathcal{D}_D) )$,
	then there exists $X \in \mathcal{L}( \mathcal{D}_D)$
	with $X = I_{\VecFields{M}}( \mathcal{L}( \phi) ) = \Rlog{g} \circ (\id{M}, \phi^{-1})$.
	Hence $\phi^{-1} = \Rexp{g} \circ X \in \mathcal{D}_D$, so $\phi \in \mathcal{D}_D^{-1}$.

	We show that $\mathcal{L}(\mathcal{D}_D \cap \mathcal{D}_D^{-1})$ is open in $\CcFM{M}{\infty}{\atlasB}$.
	By the definition of adjusting weights, $\abs{\omega^E} \geq 1$.
	Hence we can apply \refer{cor:gewVF-verschiedeneAtlanten-Chart_Changes_Weights} to see that
	$ \CcFM{M}{\infty}{\atlasB} = \CcFM{M}{\infty}{\atlasC} $.
	Hence $\mathcal{L}( \mathcal{D}_D )$ is open in $\CcFM{M}{\infty}{\atlasC} $,
	and by \ref{ass:lokale_Inv_VF}, so is $I_{\VecFields{M}}^{-1}( \mathcal{L}( \mathcal{D}_D) )$ in $\CcFM{M}{\infty}{\atlasB} $.

	Since we proved in \ref{ass:lokale_Inv_VF} that $I_{\VecFields{M}}$ is smooth on $\mathcal{L}(\mathcal{D}_D \cap \mathcal{D}_D^{-1})$,
	the inversion map is smooth on $\mathcal{D}_D \cap \mathcal{D}_D^{-1}$,
	with respect to the manifold structure induced by $\mathcal{L}$.
	Since this set is symmetric and open, and we can deduce from the things we proved in \ref{ass:lokale_Kompo_VF}
	that the composition
	\[
		(\mathcal{D}_D \cap \mathcal{D}_D^{-1}) \times (\mathcal{D}_D \cap \mathcal{D}_D^{-1})
		\to \Rexp{g} \circ R^\atlasC \cap \Diff{M}{}{}
	\]
	is smooth, it is possible to apply the theorem about generation from local data
	\cite[La.~B.2.5]{MR2952176}
	to get the assertion.
\end{proof}
\paragraph{Restricting the domain of $\boldsymbol{\Rexp{g}}$}
We restrict the domain of the exponential function,
which allows us to show that adjusting weights satisfying \ref{est:adjustWeights_exp-log} exist.
In order to do this, we need the results of \refer{susec:RiemannExpLog_VR},
in particular \refer{lem:Estimates_Rex_Rlog_DomRlog}.

\begin{lem}\label{lem:adjusting_weight_log_from_weight_exp}
	Let  $d \in \N^*$, $(M, g)$ a $d$-dimensional Riemannian manifold,
	$\atlasA = \sset{\kappa : \widetilde{U}_\kappa \to U_\kappa}$ an atlas for $M$
	and $\sigma \in ]0, 1[$.
	Further, for each $\kappa \in \atlasA$ let $V_\kappa$ be a relatively compact set with $\cl{V_\kappa} \sub U_\kappa$,
	$\delta_\kappa \in]0, \RadExpFibInv[\RMetLok{g}{\kappa}]{V_\kappa}{\sigma} \QuotNorm[\RMetLok{g}{\kappa}]{V_\kappa}[$
	and $\omega : M \to \R$ be adjusted to
	$\famiAtl{\tfrac{(1 - \sigma)^2}{1 + \sigma} \delta_\kappa}{\atlasA}$
	such that $\abs{\omega} \geq \tfrac{1 + \sigma}{1 - \sigma}$.
	Then $\omega^L \ndef \tfrac{1 - \sigma}{1 + \sigma} \omega$ is adjusted to
	$\famiAtl{ (1 - \sigma) \delta_\kappa }{\atlasA}$,
	we have $ (1 - \sigma) \delta_\kappa < \grenzLog[\RMetLok{g}{\kappa}]{V_\kappa}{U_\kappa}$
	and the weights $\omega$,  $\omega^L$ satisfy \ref{est:adjustWeights_exp-log}.
\end{lem}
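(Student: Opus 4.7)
The proof is essentially bookkeeping: we transport the quantitative content of \refer{lem:Estimates_Rex_Rlog_DomRlog} through the definitions. The plan is to apply that lemma to each $V_\kappa$ with $\tau = \delta_\kappa$ (this is legitimate since $\delta_\kappa \in ]0, \RadExpFibInv[\RMetLok{g}{\kappa}]{V_\kappa}{\sigma}\QuotNorm[\RMetLok{g}{\kappa}]{V_\kappa}[$), which immediately yields
\[
	\BndFstAblRex[\RMetLok{g}{\kappa}]{V_\kappa}{\delta_\kappa} \leq 1 + \sigma,
	\quad
	(1-\sigma)\delta_\kappa < \grenzLog[\RMetLok{g}{\kappa}]{V_\kappa}{U_\kappa},
	\quad
	\BndFstAblRlog[\RMetLok{g}{\kappa}]{V_\kappa}{(1-\sigma)\delta_\kappa} \leq \tfrac{1}{1-\sigma}
\]
for every $\kappa \in \atlasA$. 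The middle inequality is the second assertion of the lemma.

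Next I would verify \ref{est:adjustWeights_exp-log}. Multiplying the first and third of the displayed estimates gives $\BndFstAblRex[\RMetLok{g}{\kappa}]{V_\kappa}{\delta_\kappa} \cdot \BndFstAblRlog[\RMetLok{g}{\kappa}]{V_\kappa}{(1-\sigma)\delta_\kappa} \leq \frac{1+\sigma}{1-\sigma}$, so the reciprocal is bounded below by $\frac{1-\sigma}{1+\sigma}$. Since $\omega^L = \frac{1-\sigma}{1+\sigma}\omega$ by definition, pointwise we get $\abs{\omega^L_\kappa} \leq \frac{1}{\BndFstAblRex \cdot \BndFstAblRlog}\abs{\omega_\kappa}$, which is exactly \ref{est:adjustWeights_exp-log} with $\omega^E = \omega$, $\delta^E_\kappa = \delta_\kappa$, $\delta^L_\kappa = (1-\sigma)\delta_\kappa$.

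Finally, I would check that $\omega^L$ is adjusted to $\famiAtl{(1-\sigma)\delta_\kappa}{\atlasA}$. Unpacking the definition, there is a constant $K > 0$ such that $\weightAtl{K\omega}{\atlasA}$ is an adjusting weight for $\famiAtl{\frac{(1-\sigma)^2}{1+\sigma}\delta_\kappa}{\atlasA}$, i.e.\ $\sup_{x\in U_\kappa} K\abs{\weightLok{\omega}{\kappa}(x)} < \infty$ and $\inf_{x\in U_\kappa} K\abs{\weightLok{\omega}{\kappa}(x)} \geq \max\bigl(\frac{1+\sigma}{(1-\sigma)^2\delta_\kappa},1\bigr)$. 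Taking $K' := \max\bigl(K,\frac{1+\sigma}{1-\sigma}\bigr)$ and using $\omega^L = \frac{1-\sigma}{1+\sigma}\omega$, the supremum of $K'\abs{\weightLok{\omega^L}{\kappa}}$ is still finite, while the infimum is bounded below by $\frac{1}{(1-\sigma)\delta_\kappa}$ (because the scaling $\frac{1-\sigma}{1+\sigma}$ was chosen to cancel precisely against $\frac{1+\sigma}{(1-\sigma)^2\delta_\kappa}$) and simultaneously by $1$ (using the standing hypothesis $\abs{\omega}\geq\frac{1+\sigma}{1-\sigma}$, which gives $\abs{\omega^L}\geq 1$, combined with $K'\geq 1$). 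This establishes the first assertion.

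There is no real obstacle — the only thing to be careful about is that the scaling factor $\frac{(1-\sigma)^2}{1+\sigma}$ in the hypothesis on $\omega$ and the extra lower bound $\abs{\omega}\geq\frac{1+\sigma}{1-\sigma}$ are exactly what is needed to convert an adjusted weight for the exponential side into one for the logarithm side while preserving the normalization $\inf\abs{K'\omega^L} \geq \max(\cdot,1)$ built into the definition of an adjusting weight.
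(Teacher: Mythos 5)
Your proposal is correct and follows essentially the same route as the paper: both cite \refer{lem:Estimates_Rex_Rlog_DomRlog} for the bounds $\BndFstAblRex[\RMetLok{g}{\kappa}]{V_\kappa}{\delta_\kappa}\leq 1+\sigma$ and $\BndFstAblRlog[\RMetLok{g}{\kappa}]{V_\kappa}{(1-\sigma)\delta_\kappa}\leq\tfrac{1}{1-\sigma}$, multiply them to verify \ref{est:adjustWeights_exp-log}, and check the adjusting property of $\omega^L$ by the same direct scaling computation. Your handling of the constant $K$ in the definition of \enquote{adjusted} is in fact slightly more careful than the paper's, which tacitly absorbs it.
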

\begin{proof}
	Let $\kappa \in \atlasA$. Then we have that
	\[
		\abs{\weightLok{\omega^L}{\kappa}}
		= \tfrac{1 - \sigma}{1 + \sigma} \abs{\weightLok{\omega}{\kappa}}
		\geq \frac{1}{\tfrac{1 + \sigma}{1 - \sigma}} \frac{1}{\tfrac{(1 - \sigma)^2}{1 + \sigma} \delta_\kappa}
		= \frac{1}{(1 - \sigma) \delta_\kappa},
	\]
	hence $\omega^L$ is adjusted to $\famiAtl{ (1 - \sigma) \delta_\kappa }{\atlasA}$
	since we assumed that $\abs{\omega} \geq \tfrac{1 + \sigma}{1 - \sigma}$.
	Further, we know from \refer{lem:Estimates_Rex_Rlog_DomRlog}
	that $ (1 - \sigma) \delta_\kappa < \grenzLog[\RMetLok{g}{\kappa}]{V_\kappa}{U_\kappa}$,
	$\BndFstAblRex[\RMetLok{g}{\kappa}]{ V_\kappa }{\delta_\kappa} \leq 1 + \sigma$
	and $\BndFstAblRlog[\RMetLok{g}{\kappa}]{V_\kappa}{ (1 - \sigma) \delta_\kappa} \leq \tfrac{1}{1 - \sigma}$.
	Hence for $\kappa \in \atlasA$,
	\[
		\abs{\weightLok{\omega^L}{\kappa}}
		= \tfrac{1 - \sigma}{1 + \sigma} \abs{\weightLok{\omega}{\kappa}}
		\leq \tfrac{1}{\BndFstAblRex[\RMetLok{g}{\kappa}]{ V_\kappa }{\delta_\kappa} \BndFstAblRlog[\RMetLok{g}{\kappa}]{V_\kappa}{ (1 - \sigma) \delta_\kappa}}
		\abs{\weightLok{\omega}{\kappa}}.
	\]
	This finishes the proof.
\end{proof}
We are ready to prove the main result.
\begin{satz}\label{satz:Existenz_Liegruppe_Diffeos_MFK}
	Let  $d \in \N^*$, $(M, g)$ a $d$-dimensional Riemannian manifold,
	$\GewFunk \sub \cl{\R}^M$ with $1_M \in \GewFunk$
	and $\atlasA = \sset{\kappa : \widetilde{U}_\kappa \to U_\kappa}$ a locally finite atlas for $M$
	such that there exists a point in $M$ that is contained in only one $\widetilde{U}_\kappa$.
	Further, for each $\kappa \in \atlasA$ let $\eps_\kappa \in ]0, \tfrac{1}{2}[$ and $r_\kappa > 0$
	such that
	$\eps \ndef \inf_{\kappa \in \atlasA} \eps_\kappa > 0$ and
	$r \ndef \inf_{\kappa \in \atlasA} r_\kappa > 0$.
	Suppose that there exists $R > 0$ such that $\atlasA$ is adapted to
	$(\famiAtl{r_\kappa}{\atlasA}, \famiAtl{\eps_\kappa}{\atlasA}, R)$.

	Then there exists a subgroup $\Diff{M,g, \omega}{\atlasA, \atlasB}{\GewFunk}$ of $\Diff{M}{}{}$
	that is generated by $\mathcal{D}_D \cap \mathcal{D}_D^{-1}$,
	where
	\begin{equation*}
		\mathcal{D}_D \ndef
		\set{\Rexp{g} \circ\, X }%
					{X \in \CFM{M}{\GewFunk^e}{\infty}{\atlasA}, \hnM{X}{\omega}{0}{\atlasB},
								\hnM{X}{1_M}{1}{\atlasB} < \alpha }
	\end{equation*}
	with some suitable $\alpha > 0$,
	$\atlasB \ndef \set{\rest[\Ball{0}{r_\kappa + R}]{\kappa}{\kappa^{-1}(\Ball{0}{r_\kappa + R})}}{\kappa \in \atlasA}$
	and $\omega \in \GewFunk^e$ that is adjusted to $(\atlasB, \famiAtl{\widetilde{\delta}_\kappa}{\atlasA})$,
	where
	\begin{equation*}
		\widetilde{\delta}_\kappa
		\ndef
		\left\famiAtl{
			\min\Bigl(\tfrac{(1 - \sigma)^2}{1 + \sigma} \delta_\kappa,
			\tfrac{\eps_\kappa}{4 (\BndSndAblRex[\RMetLok{g}{\kappa}]{ V_\kappa }{\delta_\kappa}  + 1) }
			\Bigr)
		\right}{\atlasA}
	\end{equation*}
	and each $\delta_\kappa \in]0,  \RadExpFibInv[\RMetLok{g}{\kappa}]{V_\kappa}{\sigma} \QuotNorm[\RMetLok{g}{\kappa}]{V_\kappa}[$ with some $\sigma \in ]0, 1[$.
	Further, $\GewFunk^e \sub \cl{\R}^M$ is locally $\GewFunk$-bounded
	and a \minSatExt{} of $\GewFunk \cup \sset{\omega}$ with respect to $((\atlasA, \atlasB, g), (\atlasA, \atlasA))$.
	The map
	\[
		\mathcal{D}_D \cap \mathcal{D}_D^{-1} \to \CFM{M}{\GewFunk^e}{\infty}{\atlasB}
		: \phi \mapsto \Rlog{g} \circ (\id{M}, \phi)
	\]
	is a chart for $\DiffWMg$.
\end{satz}
\begin{proof}
	We use \refer{lem:construction_adjusted_weight_MFK} to construct a weight $\omega : M \to \R$
	that is adjusted to $(\atlasB, \famiAtl{\widetilde{\delta}_\kappa}{\atlasA})$.
	Note that $\weightAtl{\omega}{\atlasB}$, after an eventual multiplication of $\omega$ with a constant, is also adjusting for
	$
		\famiAtl{\tfrac{\min(1, \eps_\kappa r_\kappa)}{2 \BndFstAblRex[\RMetLok{g}{\kappa}]{ V_\kappa }{\delta_\kappa}}}{\atlasA}
	$
	since $\inf_{\kappa \in \atlasA} r_\kappa \eps_\kappa > 0$ by our assumption,
	and $\BndFstAblRex[\RMetLok{g}{\kappa}]{ V_\kappa }{\delta_\kappa} \leq 1 + \sigma$
	by \refer{lem:Estimates_Rex_Rlog_DomRlog}.
	Further, we see with \refer{lem:adjusting_weight_log_from_weight_exp} that
	there exists an adjusted weight $\omega^L$ such that $\omega$ and $\omega^L$ satisfy \ref{est:adjustWeights_exp-log}
	(we may assume w.l.o.g. that $\abs{\omega} \geq \tfrac{1 + \sigma}{1 - \sigma}$).
	Since $\omega$ is locally $1_M$-bounded, $\GewFunk \cup \sset{\omega}$
	is locally $\GewFunk$-bounded,
	and so is the \minSatExt{} $\GewFunk^e$ of $\GewFunk \cup \sset{\omega}$ w.r.t. $((\atlasA, \atlasB, g), (\atlasA, \atlasA))$
	that was constructed in \refer{lem:Erzeugung_Gewichtsmengen_MFK}.
	We get the desired result by applying \refer{lem:Liegrupp_DiffeosMFK_geeigneteGewichte}.
\end{proof}

\subsubsection{Inclusion of compactly supported diffeomorphisms}

We want to examine which assumptions on the weight set $\GewFunk$ ensure
that the group $\DiffWMg$ contains the identity component $\DiffcM_0$ of the group of compactly supported diffeomorphisms.
To this end, we need some tools to handle the topology on the compactly supported vector fields,
which are the modelling space of $\DiffcM_0$.%
\paragraph{Sums and the topology of $\boldsymbol{\DCcInf{M}{\Tang{M}}}$}
We use tools provided in the article \cite{arxiv-math-0408008v1}.
\begin{bem}\label{bem:topology_CcM}
	For a $d$-dimensional manifold $M$, the smooth vector fields with compact support $\DCcInfM{M}$
	are usually endowed with the inductive limit topology of the inclusion maps
	$\CF{M}{\Tang{M}}{K}{\infty} \to \DCcInfM{M}$.
	Here $\CF{M}{\Tang{M}}{K}{\infty}$ denotes the smooth vector fields $X$ with $\supp{X} \sub K$,
	and is endowed with the topology of uniform smooth convergence with respect to charts,
	see \cite[Def. F.14  and Def. F.7 \& La. F.9]{arxiv-math-0408008v1} for details.

	By \cite[Prop. F.19]{arxiv-math-0408008v1}, for a locally finite atlas $\atlasA = \sset{\kappa : \widetilde{U}_\kappa \to U_\kappa}$
	such that each $\widetilde{U}_\kappa$ is relatively compact, the map
	\[
		\DCcInfM{M}
		\to
		\bigoplus_{\kappa \in \atlasA}
		\ConDiff{\widetilde{U}_\kappa}{\Tang{\widetilde{U}_\kappa}}{\infty}
		:
		X \mapsto \famiAtl{\VectFLok{X}{\kappa}}{\atlasA}
	\]
	is an embedding. The sum is endowed with the box topology,
	see \cite[6.1-6.7 and Def. F.7 \& La. F9]{arxiv-math-0408008v1} for the definition of the sum
	respectively the topology of the summands;
	we will use these seminorms.

\end{bem}
For an easier argument, we relate the sums $\bigoplus_{i \in I} \ConDiff{U_i}{Y_i}{\ell}$
and $\bigoplus_{i \in I} \BC{V_i}{Y_i}{\ell}$, provided that $V_i \sub U_i$ is relatively compact.
\begin{lem}\label{lem:Abb_SummeCnachSummeBC_stetig}
	Let $I$ a nonempty set and $\ell \in \cl{\N}$. For each $i \in I$, let $U_i, V_i$ be open nonempty subsets
	of the locally convex space $X_i$ such that $\cl{V_i} \sub U_i$ and each $V_i$ is relatively compact,
	and $Y_i$ a normed space.
	Then for each $i \in I$, the map
	\[
		\ConDiff{U_i}{Y_i}{\ell} \to \BC{V_i}{Y_i}{\ell}
		:
		\gamma \mapsto \rest{\gamma}{V_i}
	\]
	is defined and continuous, where each $\ConDiff{U_i}{Y_i}{\ell}$ is endowed with the compact open $\ConDiff{}{}{\ell}$ topology.
	Consequently, the map
	\[
		\bigoplus_{i \in I} \ConDiff{U_i}{Y_i}{\ell}
		\to
		\bigoplus_{i \in I} \BC{V_i}{Y_i}{\ell}
		:
		\fami{\gamma_i}{i}{I} \mapsto \fami{\rest{\gamma_i}{V_i}}{i}{I}
	\]
	is also defined and continuous.
\end{lem}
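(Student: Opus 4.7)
The plan is to verify the two assertions separately, handling the single-index statement first and then deducing the direct sum version from functoriality of the box topology.

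For the first claim, I would unpack the meaning of each topology. Fix $i \in I$ and let $\gamma \in \ConDiff{U_i}{Y_i}{\ell}$. For any $k \in \N$ with $k \leq \ell$, the map $\FAbl[k]{\gamma}\colon U_i \to \Lin[k]{X_i}{Y_i}$ is continuous, where the target carries the topology of uniform convergence on bounded sets. Since $\cl{V_i}$ is compact and contained in $U_i$, the image $\FAbl[k]{\gamma}(\cl{V_i})$ is compact in $\Lin[k]{X_i}{Y_i}$, and in particular bounded. This shows that $\rest{\gamma}{V_i} \in \BC{V_i}{Y_i}{\ell}$, so the restriction map is defined.

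For continuity, I would compare the generating seminorms directly. Each seminorm on $\BC{V_i}{Y_i}{\ell}$ has the form $\eta \mapsto \sup_{x \in V_i} q(\FAbl[k]{\eta}(x))$ for some $k \leq \ell$ and some continuous seminorm $q$ on $\Lin[k]{X_i}{Y_i}$ (coming from a bounded set $B \sub X_i$). Since $V_i \sub \cl{V_i}$ and $\cl{V_i}$ is a compact subset of $U_i$, this is bounded above by the compact-open $\ConDiff{}{}{\ell}$-seminorm $\gamma \mapsto \sup_{x \in \cl{V_i}} q(\FAbl[k]{\gamma}(x))$ on $\ConDiff{U_i}{Y_i}{\ell}$. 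Hence the restriction map is continuous; linearity is obvious.

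For the second claim, I would appeal to the universal property of the box-topology direct sum: given a family of continuous linear maps $f_i\colon E_i \to F_i$, the induced map $\bigoplus_{i \in I} E_i \to \bigoplus_{i \in I} F_i$, $\fami{x_i}{i}{I} \mapsto \fami{f_i(x_i)}{i}{I}$, is continuous, since a basic zero-neighborhood $\bigoplus_{i \in I} W_i$ in the target has preimage $\bigoplus_{i \in I} f_i^{-1}(W_i)$, which is a basic zero-neighborhood in the source. Applying this to the family of restriction maps from the first part yields the assertion.

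The only point requiring care is making sure the seminorms on $\BC{V_i}{Y_i}{\ell}$ and the compact-open $\ConDiff{}{}{\ell}$-seminorms on $\ConDiff{U_i}{Y_i}{\ell}$ are compared via exactly the same continuous seminorm on $\Lin[k]{X_i}{Y_i}$; since both topologies use the same hierarchy of seminorms on the derivative target, this is routine. No genuine obstacle is expected.
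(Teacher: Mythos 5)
Your proposal is correct and follows essentially the same route as the paper: the paper likewise reduces the direct-sum statement to the single-index one via the coproduct/box-topology property of $\bigoplus_{i \in I} \ConDiff{U_i}{Y_i}{\ell}$, and proves the single-index claim by noting that relative compactness of $V_i$ makes each restricted derivative bounded and the relevant sup-seminorms dominated by compact-open $\ConDiff{}{}{\ell}$-seminorms over $\cl{V_i}$. Your write-up merely makes explicit the seminorm comparison that the paper dismisses as a ``standard compactness argument.''
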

\begin{proof}
	According to \cite[Rem. 6.7]{arxiv-math-0408008v1},
	the spaces $\bigoplus_{i \in I} \ConDiff{U_i}{Y_i}{\ell}$ are the direct sum in the category of locally convex spaces,
	hence the second assertion follows if the first one is proved.
	Since we assumed that each $V_i$ is locally compact, each restricted map (and any of its derivatives) is bounded,
	and we see using standard compactness arguments that the restriction is continuous.
\end{proof}
We show that any locally bounded function induces continuous seminorms
on the sum $\bigoplus_{\kappa \in \atlasA} \BC{U_\kappa}{\R^d}{\infty}$.
\begin{lem}\label{lem:ContSN_sumBC}
	Let $d\in \N$, $M$ be $d$-dimensional manifold, $f : M \to \R$ locally bounded, $\ell \in \N$
	and $\atlasA = \sset{\kappa : \widetilde{U}_\kappa \to U_\kappa}$ a locally finite atlas such that
	each $\widetilde{U}_\kappa$ is relatively compact.
	Then $\hnM{\cdot}{f}{\ell}{\atlasA}$ is a continuous seminorm on
	$\bigoplus_{\kappa \in \atlasA} \BC{U_\kappa}{\R^d}{\infty}$.
\end{lem}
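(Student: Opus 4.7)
The plan is to reduce the continuity of $\hnM{\cdot}{f}{\ell}{\atlasA}$ on the locally convex direct sum to the continuity of its restrictions to the individual summands, and then establish the latter by dominating $|f|$ locally using the hypotheses on $\atlasA$ and $f$. First, I observe that the seminorm is well-defined on the algebraic direct sum: for $\famiAtl{\gamma_\kappa}{\atlasA} \in \bigoplus_{\kappa \in \atlasA} \BC{U_\kappa}{\R^d}{\infty}$, only finitely many components are nonzero, so $\sup_{\kappa \in \atlasA} \hn{\gamma_\kappa}{f \circ \kappa^{-1}}{\ell}$ is in fact a finite maximum provided each term is finite.

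Next I would establish the key local estimate. Fix $\kappa \in \atlasA$. Since $\cl{\widetilde{U}_\kappa}$ is compact and $f$ is locally bounded, a standard finite subcover argument (extract finitely many neighborhoods from the local boundedness cover of $\cl{\widetilde{U}_\kappa}$) yields
\[
c_\kappa \ndef \sup_{p \in \widetilde{U}_\kappa} \abs{f(p)} < \infty .
\]
Consequently, for every $\gamma_\kappa \in \BC{U_\kappa}{\R^d}{\infty}$,
\[
\hn{\gamma_\kappa}{f \circ \kappa^{-1}}{\ell}
= \sup_{x \in U_\kappa} \abs{f(\kappa^{-1}(x))}\, \Opnorm{\FAbl[\ell]{\gamma_\kappa}(x)}
\leq c_\kappa \hn{\gamma_\kappa}{1_{U_\kappa}}{\ell},
\]
and $\hn{\cdot}{1_{U_\kappa}}{\ell}$ is a defining continuous seminorm of $\BC{U_\kappa}{\R^d}{\infty}$. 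In particular $\hn{\gamma_\kappa}{f \circ \kappa^{-1}}{\ell} < \infty$, so the well-definedness noted above is now justified, and $\hn{\cdot}{f\circ\kappa^{-1}}{\ell}$ is a continuous seminorm on $\BC{U_\kappa}{\R^d}{\infty}$ for every $\kappa$.

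Finally, I would invoke the universal property of the locally convex direct sum: a seminorm on $\bigoplus_{\kappa \in \atlasA} \BC{U_\kappa}{\R^d}{\infty}$ is continuous if and only if its pullback along each canonical inclusion is continuous. The pullback of $\hnM{\cdot}{f}{\ell}{\atlasA}$ along the inclusion of the $\kappa_0$-th summand is exactly $\hn{\cdot}{f \circ \kappa_0^{-1}}{\ell}$, which is continuous by the preceding step. Hence $\hnM{\cdot}{f}{\ell}{\atlasA}$ is continuous on the direct sum. The only conceptually delicate point is the passage from local boundedness of $f$ to uniform boundedness on each $\widetilde{U}_\kappa$; once this is settled, the rest is a direct appeal to the definition of the direct sum topology.
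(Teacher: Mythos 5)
Your proof is correct and follows essentially the same route as the paper: bound $\abs{f}$ on each relatively compact chart domain via a compactness argument, deduce $\hn{\gamma_\kappa}{f\circ\kappa^{-1}}{\ell} \leq c_\kappa \hn{\gamma_\kappa}{1_{U_\kappa}}{\ell}$, and conclude continuity on the locally convex direct sum from continuity on each summand. The only difference is that you spell out the well-definedness and the direct-sum reduction slightly more explicitly than the paper does.
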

\begin{proof}
	Since $f$ is locally bounded, it is bounded on each compact set,
	and in consequence on each $\widetilde{U}_\kappa$, which can be proved with a standard compactness argument.
	So for $\kappa \in \atlasA$ and $\gamma \in \BC{\widetilde{U}_\kappa}{\R^d}{\infty}$,
	we have that
	\[
		\hn{\gamma}{\weightLok{f}{\kappa}}{\ell} \leq \noma{\weightLok{f}{\kappa}} \hn{\gamma}{1_{U_\kappa}}{\ell}.
	\]
	Hence $\hnM{\cdot}{f}{\ell}{\atlasA}$ is continuous since it is so on each summand.
\end{proof}
\paragraph{Inclusion of compactly supported diffeomorphisms}
We are ready to prove the criterion.
\begin{prop}\label{prop:Inclusion_of_compactly_supported_diffeomorphisms}
	Let $d \in \N^*$, $(M, g)$ a $d$-dimensional Riemannian manifold
	and $\atlasA = \sset{\kappa : \widetilde{U}_\kappa \to U_\kappa}$ a locally finite atlas for $M$
	such that there exists a point in $M$ that is contained in only one $\widetilde{U}_\kappa$
	and that is adapted to some $(\famiAtl{r_\kappa}{\atlasA}, \famiAtl{\eps_\kappa}{\atlasA}, R)$
	with $\inf_{\kappa \in \atlasA} \eps_\kappa, \inf_{\kappa \in \atlasA} r_\kappa > 0$.
	Further, let $\GewFunk \sub \R^M$ with $1_M \in \GewFunk$ such that each $f \in \GewFunk$
	is bounded on all compact subsets of $M$.
	Then $\DiffcM_0 \sub \DiffWMg$ for all $\atlasB$ and $\omega$ as in \refer{satz:Existenz_Liegruppe_Diffeos_MFK}.%
\end{prop}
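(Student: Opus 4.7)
The plan is to exhibit a zero-neighborhood $\mathcal{V}$ in $\DCcInfM{M}$ such that the map $X\mapsto\Rexp{g}\circ X$ sends $\mathcal{V}$ into the generating set $\mathcal{D}_D\cap\mathcal{D}_D^{-1}$ of $\DiffWMg$, and to observe that the image is a neighborhood of $\id{M}$ in the standard topology of $\DiffcM$. Since $\DiffcM_0$ is a connected topological group and every connected topological group is generated by an arbitrary neighborhood of its identity, every $\phi\in\DiffcM_0$ is then a finite product of elements of $\mathcal{D}_D\cap\mathcal{D}_D^{-1}\sub\DiffWMg$, yielding $\DiffcM_0\sub\DiffWMg$.

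The first preparatory observation is that every weight $f\in\GewFunk^e$ is locally bounded on $M$: since $1_M\in\GewFunk$ and each $f\in\GewFunk$ is bounded on compact subsets, every $f\in\GewFunk$ is locally bounded (because $M$ is locally compact Hausdorff), and this property transfers to $\GewFunk^e$ via the local $\GewFunk$-boundedness granted by \refer{lem:Erzeugung_Gewichtsmengen_MFK}. It follows that any compactly supported smooth vector field $X$ has finite weighted seminorms $\hnM{X}{f}{\ell}{\atlasB}$: local finiteness of $\atlasA$ makes only finitely many localizations $\VectFLok{X}{\kappa}$ nonzero, and each is bounded in $C^\ell$ on its compact support where $\weightLok{f}{\kappa}$ is bounded. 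Hence $\DCcInfM{M}\sub\CFM{M}{\GewFunk^e}{\infty}{\atlasB}$ as sets.

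The second step is to show this set inclusion is continuous. I combine three inputs: (a) the embedding of \refer{bem:topology_CcM}, $\DCcInfM{M}\to\bigoplus_{\kappa\in\atlasB}\ConDiff{V_\kappa}{\R^d}{\infty}$, available because $\atlasB$ is locally finite with relatively compact chart domains (from $\clBall{0}{r_\kappa+R}\sub U_\kappa$); (b) the continuous restriction of \refer{lem:Abb_SummeCnachSummeBC_stetig} from $\bigoplus_{\kappa\in\atlasB}\ConDiff{V_\kappa}{\R^d}{\infty}$ into $\bigoplus_{\kappa\in\atlasC}\BC{\Ball{0}{r_\kappa}}{\R^d}{\infty}$, where $\atlasC\ndef\set{\rest[\Ball{0}{r_\kappa}]{\kappa}{\kappa^{-1}(\Ball{0}{r_\kappa})}}{\kappa\in\atlasA}$ is subordinate to $\atlasB$; and (c) the continuity of each weighted seminorm $\hnM{\cdot}{f}{\ell}{\atlasC}$ on this latter sum, furnished by \refer{lem:ContSN_sumBC}. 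Finally, \refer{cor:gewVF-verschiedeneAtlanten-Chart_Changes_Weights} identifies $\CFM{M}{\GewFunk^e}{\infty}{\atlasC}$ with $\CFM{M}{\GewFunk^e}{\infty}{\atlasB}$ topologically through the saturation properties of $\GewFunk^e$, closing the continuity argument.

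In the final step, continuity of the inclusion together with smoothness of $I_{\VecFields{M}}$ at $0$, from \ref{ass:lokale_Inv_VF} of \refer{lem:Liegrupp_DiffeosMFK_geeigneteGewichte}, allows me to choose $\mathcal{V}\sub\DCcInfM{M}$ small enough that every $X\in\mathcal{V}$ satisfies $\hnM{X}{\omega}{0}{\atlasB}<\alpha$ and $\hnM{X}{1_M}{1}{\atlasB}<\alpha$, and that $I_{\VecFields{M}}(X)$ satisfies the analogous smallness bounds. Identity \ref{id:Lokale_Darstellung_Inversion_durch_glatte_Abb} then gives $(\Rexp{g}\circ X)^{-1}=\Rexp{g}\circ I_{\VecFields{M}}(X)$, so both $\Rexp{g}\circ X$ and its inverse lie in $\mathcal{D}_D$; thus $\Rexp{g}\circ X\in\mathcal{D}_D\cap\mathcal{D}_D^{-1}\sub\DiffWMg$. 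The image $\set{\Rexp{g}\circ X}{X\in\mathcal{V}}$ is a neighborhood of $\id{M}$ in $\DiffcM_0$ by the classical Lie group structure on compactly supported diffeomorphisms (cf.~\cite{MR583436}, \cite{MilnorPreprint82}), which completes the argument. The main obstacle is the bookkeeping in the second step: juggling the atlases $\atlasB$ and $\atlasC$ and correctly transferring the topology back to $\atlasB$ via \refer{cor:gewVF-verschiedeneAtlanten-Chart_Changes_Weights} takes some care, though it is essentially routine.
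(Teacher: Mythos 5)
Your proposal is correct and follows essentially the same route as the paper: local boundedness of the weights in $\GewFunk^e$, the direct-sum description of $\DCcInfM{M}$ from \refer{bem:topology_CcM} combined with \refer{lem:Abb_SummeCnachSummeBC_stetig}, \refer{lem:ContSN_sumBC} and \refer{cor:gewVF-verschiedeneAtlanten-Chart_Changes_Weights} to get a continuous inclusion $\DCcInfM{M}\hookrightarrow\CFM{M}{\GewFunk^e}{\infty}{\atlasA}$, and then openness of the preimage of the generating set plus connectedness of $\DiffcM_0$. Your extra step of shrinking $\mathcal{V}$ via $I_{\VecFields{M}}$ so that inverses also land in $\mathcal{D}_D$ is slightly more careful than the paper's phrasing (which only exhibits an open subset of $\mathcal{D}_D$ inside $\DiffWMg$), but it is the same argument in substance.
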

\begin{proof}
	For relatively compact $V_\kappa$ such that $\cl{V_\kappa} \sub U_\kappa$ and
	$\clBall{0}{r_\kappa + R} \sub V_\kappa$,
	the map
	\[
		\DCcInfM{M} \to \bigoplus_{\kappa \in \atlasA} \ConDiff{V_\kappa}{\R^d}{\infty}
		: X \mapsto \famiAtl{\VectFLok{X}{\kappa}}{\atlasA}
	\]
	is an embedding, see \refer{bem:topology_CcM}.
	Since $\GewFunk^e$ is locally $\GewFunk$-bounded and each weight in $\GewFunk$ is locally bounded,
	each $f \in \GewFunk^e$ is also locally bounded.
	Hence we can use \refer{lem:ContSN_sumBC} and \refer{lem:Abb_SummeCnachSummeBC_stetig}
	to see that for $f \in \GewFunk^e$ and $\ell \in \N$,
	$\hnM{\cdot}{f}{\ell}{\atlasB}$ is defined and continuous
	on $\bigoplus_{\kappa \in \atlasA} \ConDiff{V_\kappa}{\R^d}{\infty}$
	and hence on $\DCcInfM{M}$.
	This, together with \refer{cor:gewVF-verschiedeneAtlanten-Chart_Changes_Weights},
	implies that $\DCcInfM{M} \sub \CFM{M}{\GewFunk^e}{\infty}{\atlasA}$,
	and that for each $\alpha > 0$,
	\[
		\set{ X \in \DCcInfM{M} }%
						{ \hnM{X}{\omega}{0}{\atlasB},
									\hnM{X}{1_M}{1}{\atlasB} < \alpha }
	\]
	is open in $\DCcInfM{M}$. We know from \refer{satz:Existenz_Liegruppe_Diffeos_MFK} that $\DiffWMg$ is modelled on $\CFM{M}{\GewFunk^e}{\infty}{\atlasA}$,
	and for some $\alpha > 0$, it contains the set
	\[
		\set{\Rexp{g} \circ\, X }%
							{X \in \CFM{M}{\GewFunk^e}{\infty}{\atlasA}, \hnM{X}{\omega}{0}{\atlasB},
										\hnM{X}{1_M}{1}{\atlasB} < \alpha }
		.
	\]
	Hence $\DiffWMg$ contains an open identity neighborhood of $\DiffcM$,
	and thus $\DiffcM_0$.
\end{proof}

\subsubsection{Comparison with the vector space case}

We show that the connected components of the Lie groups $\DiffWvr{\R^d}{\SkaPrd{\cdot}{\cdot}}$
that were constructed in \refer{satz:Existenz_Liegruppe_Diffeos_MFK},
and of $\Diff{\R^d}{}{\GewFunk}$ as constructed in \cite[Thm.~4.2.17]{MR2952176} coincide,
if $\atlasA$ consists of identity maps.
\begin{prop}\label{prop:Comparison_DiffLieGroups_VectorS}
	Let $d \in \N^*$ and $\GewFunk \sub \cl{\R}^{\R^d}$ with $1_{\R^d} \in \GewFunk$.
	Then $\Diff{\R^d}{}{\GewFunk}_0 = \DiffWvr{\R^d}{\SkaPrd{\cdot}{\cdot}}_0$,
	where
	\[
		\atlasA \ndef \set{\id{\Ball{x}{r1}}}{x \in \Z^d}
		\text{  and  }
		\atlasB \ndef \set{\id{\Ball{x}{r_2}}}{x \in \Z^d},
	\]
	with $1 \geq r_1 > r_2 > \tfrac{1}{2}$, and $\R^d$ is endowed with the supremum norm $\noma{\cdot}$.
\end{prop}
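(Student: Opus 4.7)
The plan is to reduce both sides to the same local model around the identity, exploiting three properties of the Euclidean, identity-chart setting: the Riemannian exponential and logarithm are affine, localization of a vector field is just restriction, and all transition differentials are constant. For $g = \SkaPrd{\cdot}{\cdot}$, geodesics are affine lines, so $\Rexp{g}(x, v) = x + v$ and $\RlogVR{g}(x, y) = y - x$ globally; hence $\Rexp{g} \circ X = \id{\R^d} + X$ for any $X \in \VecFields{\R^d}$, and the chart $\phi \mapsto \Rlog{g} \circ (\id{\R^d}, \phi) = \phi - \id{\R^d}$ of \refer{satz:Existenz_Liegruppe_Diffeos_MFK} agrees on the nose with the chart $\phi \mapsto \phi - \id{\R^d}$ used to model $\Diff{\R^d}{}{\GewFunk}$ in \cite[§4]{MR2952176}. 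Because each chart in $\atlasA$ and $\atlasB$ is the identity on a sup-norm ball, the localization $\VectFLok{X}{\kappa}$ is the restriction of (the principal part of) $X$ to $U_\kappa$ under $\Tang{U_\kappa} \cong U_\kappa \times \R^d$, and $\RMetLok{g}{\kappa} = g$.

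Next I would identify the two model spaces as topological vector spaces. Because $r_2 > \tfrac{1}{2}$ in the supremum norm, the balls in $\atlasB$ cover $\R^d$, whence for every $f : \R^d \to \cl{\R}$ and $\ell \in \N$,
\[
	\hnM{X}{f}{\ell}{\atlasB}
	= \sup_{\kappa \in \atlasB}\sup_{x \in U_\kappa} \abs{f(x)}\,\Opnorm{\FAbl[\ell]{X}(x)}
	= \hn{X}{f}{\ell};
\]
this matches the $\GewFunk$-seminorms defining $\CF{\R^d}{\R^d}{\GewFunk}{\infty}$. The subtlety is that the new Lie group is modelled on $\CFM{\R^d}{\GewFunk^e}{\infty}{\atlasB}$, with $\GewFunk^e$ a \minSatExt{} of $\GewFunk \cup \sset{\omega}$. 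Inspecting the saturation procedure of \refer{lem:Erzeugung_Gewichtsmengen_MFK}, the multiplier families arise from derivatives of the Euclidean maps $\ExpMinusLok{V_\kappa}{\delta}{g} : (x, y) \mapsto y$, $\LogPlusVR{V_\kappa}{\delta}{g} : (x, y) \mapsto y$ and the transition differentials $\FAbl{(\kappa \circ \phi^{-1})} = \id{}$, all of whose sup-norms belong to $\sset{0, 1}$ uniformly in $\kappa$; and \refer{lem:construction_adjusted_weight_MFK} lets us pick the adjusted weight $\omega$ uniformly bounded above and below, making it locally equivalent to $1_{\R^d} \in \GewFunk$. Iterating, every element of $\GewFunk^e$ is locally majorized by a scalar multiple of some element of $\GewFunk$ uniformly across $\atlasB$, yielding $\CFM{\R^d}{\GewFunk^e}{\infty}{\atlasB} = \CF{\R^d}{\R^d}{\GewFunk}{\infty}$ as topological vector spaces.

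Finally, with the model spaces identified, composition and inversion on the new side become, under the chart $X \mapsto \id{\R^d} + X$, the maps $(X, Y) \mapsto (\id{\R^d} + X) \circ (\id{\R^d} + Y) - \id{\R^d}$ and $X \mapsto (\id{\R^d} + X)^{-1} - \id{\R^d}$ on a suitable open $0$-neighborhood, which are exactly the operations defining the Lie group structure on $\Diff{\R^d}{}{\GewFunk}$ in \cite{MR2952176}. The two groups thus share an identity neighborhood with identical local group structure, and the local-data uniqueness principle \cite[La.~B.2.5]{MR2952176} together with the fact that identity components are generated by any symmetric identity neighborhood give $\Diff{\R^d}{}{\GewFunk}_0 = \DiffWvr{\R^d}{\SkaPrd{\cdot}{\cdot}}_0$. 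The main obstacle is the middle step: verifying that the iterative saturation does not enlarge the topology, which relies on the uniform boundedness of the multiplier constants $B^i$ across the (infinite) atlas and on choosing $\omega$ essentially bounded, both of which are special to the Euclidean, identity-chart situation.
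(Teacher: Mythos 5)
Your proposal is correct and follows essentially the same route as the paper's proof: affine exponential and logarithm, identity transition maps, identification of the seminorms $\hnM{\cdot}{f}{\ell}{\atlasB}$ with $\hn{\cdot}{f}{\ell}$ via the covering property of the balls, and matching of the parameterization maps. The only difference is one of emphasis — you spell out why the saturation procedure and the adjusted weight $\omega$ do not enlarge the topology (uniform boundedness of the multiplier constants across the atlas), a point the paper compresses into \enquote{we easily deduce that $\GewFunk$ is already saturated}.
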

\begin{proof}
	Obviously, $\atlasA$ is a locally finite atlas since $\Ball{x}{1 - r_1}$ has nonempty intersection with at most
	$2^d$ chart domains, for all $x \in \R^d$.
	Further, if we set $R \ndef \tfrac{1}{2} (r_1 - r_2)$ and choose $\eps \in ]0, \tfrac{1}{2} \tfrac{r_1 - r_2}{r_1 + r_2}[$,
	$\atlasA$ is adapted to $(r_2, \eps, R)$.

	We have that $\domExpMax{\SkaPrd{\cdot}{\cdot}} = \domLogMax{\SkaPrd{\cdot}{\cdot}} = \R^{2d}$,
	and further that $\Rexp{\SkaPrd{\cdot}{\cdot}}(x, y) = x + y$ and $\Rlog{\SkaPrd{\cdot}{\cdot}}(x, y) = y - x$.
	Hence $\FAbl[2]{\Rexp{\SkaPrd{\cdot}{\cdot}}} = 0$, and for $x \in \Z^d$ and $\sigma \in ]0, 1[$,
	\[
		\grenzExp[\SkaPrd{\cdot}{\cdot}]{\Ball{x}{r_2}}{\Ball{x}{r_1}}
		= \RadExpFibInv[\SkaPrd{\cdot}{\cdot}]{\Ball{x}{r_2}}{\sigma}
		= r_1 - r_2
	\]
	and $\ExpMinusLok{\Ball{x}{r_2}}{\delta}{\SkaPrd{\cdot}{\cdot}} = \pi_2$
	for all $\delta \in ]0, r_1 - r_2[$.
	For $\grenzLog[\SkaPrd{\cdot}{\cdot}]{\Ball{x}{r_2}}{\Ball{x}{r_1}}$ we have
	\[
		\grenzLog[\SkaPrd{\cdot}{\cdot}]{\Ball{x}{r_2}}{\Ball{x}{r_1}}
		= \tfrac{1}{\sqrt{d}} (r_1 - r_2)
	\]
	and
	$\LogPlusVR{\Ball{x}{r_2}}{\delta}{\SkaPrd{\cdot}{\cdot}} = \pi_2$
	for all $\delta \in ]0, \tfrac{1}{\sqrt{d}} (r_1 - r_2)[$.
	For $\kappa, \phi \in \atlasA$ with $(\kappa, \phi) \in \atProd{\atlasA}{\atlasA}$,
	\[
		\FAbl{(\kappa \circ \phi^{-1})} = \idco .
	\]
	We easily deduce that $\GewFunk$ is already saturated,
	and $1_M$ is adjusted if we choose the same
	$\delta < \tfrac{1}{\sqrt{d}} (r_1 - r_2) =
	\QuotNorm[\SkaPrd{\cdot}{\cdot}]{\Ball{x}{r_2}} \RadExpFibInv[\SkaPrd{\cdot}{\cdot}]{\Ball{x}{r_2}}{\sigma}$
	for all charts.
	Further, for all $f \in \GewFunk$, $\ell \in \N$ and $X \in \VecFields{\R^d}$,
	\[
		\hn{\pi_2 \circ X}{f}{\ell} = \hnM{X}{f}{\ell}{\atlasB},
	\]
	and hence $\CcFM{\R^d}{\infty}{\atlasB} \iso \CcF{\R^d}{\R^d}{\infty}$.
	Since the parameterization maps are also compatible, we see that $\DiffWvr{\R^d}{\SkaPrd{\cdot}{\cdot}}$
	contains an open subset of $\Diff{\R^d}{}{\GewFunk}$, and vice versa.
	Hence the assertion holds.
\end{proof}

\printbibliography

\end{document}